\def\RR{\mathbb{R}}
\def\CC{\mathbb{C}}
\def\NN{\mathbb{N}}
\def\n{ {\cal N} }
\def\h{ {\cal H} }
\def\ele{ {\cal L} }
\def\b{ {\cal B} }
\def\s{ {\cal S} }
\def\gg{ {\mathbb G} }
\def\noi{\noindent}
\def\rank{ \mathrm{rank} }
\def\g1{ \mathfrak{g}_1  } 
\def\gp{ \mathfrak{g}_p  }
\def\gq{ \mathfrak{g}_q   }
\def\vp{  \varphi }
\newtheorem{teo}{Theorem}[section]
\newtheorem{prop}[teo]{Proposition}
\newtheorem{lem}[teo]{Lemma}
\newtheorem{coro}[teo]{Corollary}
\theoremstyle{definition}
\newtheorem{rem}[teo]{Remark}
\newtheorem{ejem}[teo]{Example}
\newtheorem{nota}[teo]{Notation}
\title{The compatible Grassmannian}
\author{E. Andruchow, E. Chiumiento, M. E. Di Iorio y Lucero}
\begin{document}

\maketitle 

\begin{abstract}
Let $A$ be a positive injective operator in a Hilbert space $(\h,<\ ,\ >)$, and denote by $[\ , \ ]$ the  inner product defined by $A$: $[f,g]=<Af,g>$. A closed subspace $\s \subset \h$ is called  $A$-compatible if there exists a closed complement for $\s$, which is orthogonal to $\s$ with respect to the inner product $[\ ,\ ]$. Equivalently, if there exists a 
necessarily unique idempotent operator $Q_\s$ such that $R(Q_\s)=\s$, which is symmetric for this inner product. The compatible Grassmannian $Gr_A$ is the set of all $A$-compatible subspaces of $\h$. By parametrizing it via the one to one correspondence $\s\leftrightarrow Q_\s$, this set is shown to be a differentiable submanifold of the Banach space of all operators in $\h$ which are symmetric with respect to the form $[\ , \ ]$. A Banach-Lie group acts naturally on the compatible Grassmannian, the group of all invertible operators in $\h$ which preserve the form $[\ , \ ]$.  Each connected component in $Gr_A$ of a compatible subspace $\s$ of finite dimension, turns out to be a symplectic leaf in a Banach Lie-Poisson space.
For $1\le p \le \infty$, in the presence of a fixed  $[\ , \ ]$-orthogonal decomposition of $\h$, $\h=\s_0\dot{+} \n_0$, we study the restricted compatible Grassmannian (an analogue of the restricted, or Sato Grassmannian). This restricted compatible Grassmannian is  shown to be a submanifold of the Banach space of $p$-Schatten operators which are symmetric for the form $[\ , \ ]$. It carries the locally transitive action of the Banach-Lie group of invertible operators which preserve $[\ , \ ]$, and are of the form $G=1+K$, with $K$ in $p$-Schatten class. The connected components of this restricted Grassmannian are characterized by means of of the Fredholm index of pairs of projections. Finsler metrics which are isometric for the group actions are introduced for both compatible Grassmannians, and minimality results for curves are proved. 
\end{abstract}
\bigskip

{\bf 2010 MSC:}  58B10, 58B20, 47B10.

{\bf Keywords:}  Compatible subspace, Grassmannian, restricted Grassmannian.

\section{Introduction}
Let $0< A\le 1$ be a positive  operator with trivial kernel in a Hilbert space $(\h,<\ ,\ >)$. Then $A$ defines an inner product $[\ ,\ ]=[\ ,\ ]_A$ on $\h$ by means of 
$$
[f,g]=<Af,g>, \ f,g\in\h.
$$
Apparently, the form $[\ ,\ ]$ is continuous with respect to the topology  of $\h$. If $A$ is not invertible, $\h$ is not complete with this inner product. We shall denote by $\ele$ the completion of $\h$ with this inner product. A closed linear subspace $\s\subset \h$ is called {\it compatible with $A$} ({\it or $A$-compatible}) if it admits a supplement which is orthogonal with respect to the inner product defined by $A$. In this paper, we study the {\it compatible Grassmannian}, namely
\[  Gr_A= \{ \,  \s \subset \h \, : \, \text{$\s$ is compatible  with $A$}   \,  \}.   \]
Since  $A$ has trivial kernel, if $\s$ is compatible with $A$, then the supplement is unique, and it is given by $A(\s)^\perp$.
This allows us to identify each compatible subspace $\s$ with the projection $Q_\s$ with range $\s$ and kernel $A(\s)^\perp$. Thus the 
compatible Grassmannian may be regarded as the following set
\[ Gr_A=\{ \, Q \in \b(\h) \,  : \, Q^2=Q, \, Q^*A=AQ  \,  \}, \]
where we will consider the uniform topology inherited from $\b(\h)$. The proof of the afore-mentioned facts and examples of compatible and non compatible subspaces can be found in \cite{cms,cms02}, whereas for additional information on applications  
 of compatible subspace to statistics, sampling, signal processing  and frames the reader  should see the references in  \cite{cms05}.
 
Note that there are two notions of orthogonality, the one given by the usual inner product $<\ ,\ >$ of $\h$, and the one given by $[\ ,\ ]$, which we will call $A$-orthogonality. This 
leads us to consider, as  useful tools in this work, the results on operators on spaces with two norms by M. G. Krein \cite{krein}, J. Dieudonn\'e \cite{dieudonne}, P. D. Lax \cite{lax}, I. C. Gohberg and M. K. Zambicki\v{\i} \cite{gz}. 
  An operator will be called {\it $A$-symmetric} (resp. {\it $A$-anti-symmetric}, {\it $A$-unitary}) if it is symmetric (resp. anti-symmetric, unitary) with respect to the $A$ inner product $[\ ,\ ]$. It is apparent that
$B$ is $A$-symmetric (resp. $A$-anti-symmetric) if and only if  
$
B^*A=AB 
$ (resp. $B^*A=-AB$). 
It also follows that  $A$-unitary operators form a subgroup $\gg_A$ of the linear invertible group $Gl(\h)$, which can be characterized as  
\[  \gg_A=\{  \,  G \in Gl(\h) \, : \, G^*AG=A   \,  \}. \] 
Moreover, it is a Banach-Lie group endowed with the uniform topology of $\b(\h)$ and its Lie algebra can be identified 
with the $A$-anti-symmetric operators (see \cite{achl}). However,  $\gg_A$ is not necessarily a complemented  
submanifold of $\b(\h)$ if $A$ is not invertible. We prove a result 
stating that the Banach-Lie algebra of $\gg_A$ is  complemented in $\b(\h)$ if and only if the kernel of a given nilpotent derivation is 
complemented  (Theorem \ref{equivalence compl}). 

If $\s$ is compatible with $A$ and $G\in\gg_A$, then $G(\s)$ is compatible with $A$ and $Q_{G(\s)}=GQ_\s G^{-1}$. Thus $\gg_A$ acts on $Gr_A$ by similarity.



Our main results concern the differentiable structure of $Gr_A$. We show that $Gr_A$ is a complemented submanifold of the real Banach space of $A$-symmetric operators, and that the action of $\gg_A$ is locally 
transitive and  has smooth local cross sections (Corollary \ref{est difer}). The compatible Grassmannian has a remarkable property with respect to the space ${\cal Q}(\h)$ of all idempotents in $\b(\h)$, studied by G. Corach, H. Porta and L. Recht in \cite{cpr}. In this paper a linear connection was introduced in ${\cal Q}(\h)$, and its geodesics were computed: these are curves of the form $\delta (t)=e^{tX}Qe^{-tX}$ for specific operators $X$. We show here that if two elements $Q_1$ and $Q_2$ of $Gr_A$ lie at norm distance less than $r=r_{Q_1}$, then the unique geodesic of ${\cal Q}(\h)$ joining them lies inside $Gr_A$ (see Proposition \ref{estimate r}; Corollary \ref{geodesicas}).

In the case where $Q \in Gr_A$ has finite rank, and $\h$, $\ele$ are special function spaces,   
the orbit of $Q$ is  the Grassmann manifold arising in the   Hartree-Fock theory (see \cite{chm}). We show that these orbits are strong  symplectic leaves in a Banach Lie-Poisson space (Corollary \ref{symplectic leaves}). These  infinite dimensional  Poisson structures were introduced by   A. Odzijewicz and T. Ratiu in  \cite{or}.

We also define a {\it restricted compatible Grassmannian} $Gr_{res, p}$, analogous to the restricted Grassmannian (also called Sato Grassmannian, \cite{pressleysegal,w}). Given $1\le p\le \infty$ and a fixed $A$-orthogonal decomposition
$$
\h=\s_0 \, \dot{+} \, \n_0 ,
$$
with $Q_0=Q_{\s_0}$ (so that $\n_0=N(Q_0)$, the kernel of $Q_0$), an element $Q$ of $Gr_A$ belongs to $G_{res, p}$ if
$$
Q-Q_0\in\b_p(\h),
$$
where $\b_p(\h)$ denotes the class of  $p$-Schatten operators in $\h$. We show that the Banach-Lie group 
$$
\gg_{p,A}=\{G\in \gg_A: G-1\in\b_p(\h)\}
$$
acts on $Gr_{res, p}$, that the action is locally transitive, and that the orbits are the connected components of $Gr_{res, p}$. Moreover, these orbits are characterized by the Fredholm index of pairs of projections defined by J. Avron, R. Seiler and  B. Simon in \cite{ass}. Given a pair $(P_1,P_2)$ of orthogonal projections, the index of the pair is the index of the operator
$$
P_2P_1|_{R(P_1)}\to R(P_2),
$$
if this operator is Fredholm (in which case the pair $(P_1,P_2)$ is called a Fredholm pair, otherwise the index is infinite). If $Q_1,Q_2\in Gr_{res, p}$, then their extensions $\bar{Q}_1, \bar{Q}_2$ are orthogonal projections on $\ele$, and form a Fredholm pair. In Theorem \ref{components gres} we show that they lie in the same connected component of $Gr_{res, p}$ (or equivalently, are conjugate by the action of $\gg_{p,A}$) if and only if the index of the pair is zero (or equivalently, $index(\bar{Q}_1,\bar{Q}_0)=index(\bar{Q}_2,\bar{Q}_0)$).
These components are  differentiable manifolds in the local structure given by the $p$-norm. Also in the restricted compatible Grassmannian, it is shown that geodesics of ${\cal Q}(\h)$ joining sufficiently close elements of $Gr_{res, p}$, lie inside $Gr_{res, p}$.

We introduce non complete Finsler metrics  for both $Gr_A$ and $Gr_{res, p}$, in terms, respectively, 
of the operator norm and the $p$-norm in $\ele$. Being isometric for the respective group actions, these metrics 
are a natural choice. We  prove that the geodesics described above joining sufficiently close elements  have minimal length (see Proposition \ref{minimal}; Corollary \ref{minimalp}).

Let us give a brief outline of the contents  of this paper. Preliminary results on operators on spaces 
with two norms are stated in Section \ref{on preliminary facts}, as well as the analysis of when the real space of $A$-anti-symmetric operators is complemented in $\b(\h)$. We conjecture that this happens if and only if $A$ is invertible. In Section \ref{eamples of compatibility} we give examples of compatible subspaces. In Section \ref{diff structure gra} we examine the local structure of $Gr_A$, and the properties of the action of $\gg_A$. The property of permanence of geodesics joining close elements is proved, and an estimate of this geodesic radius is given. In Section \ref{Banach Lie P} we introduce a Banach-Lie-Poisson structure  in the finite rank components of $Gr_A$. In Section \ref{res comp gra} we introduce the restricted compatible Grassmannian $Gr_{res, p}$. We prove the  local regular structure, the characterization of the components in terms of the index of pairs, and the permanence of geodesics. In Section \ref{minimal curves} we introduce invariant Finsler metrics in $Gr_A$ and $Gr_{res, p}$, and we prove the minimality results.

\begin{nota}
The Hilbert space  $\h$ is endowed with two (inner product) norms, $\| \, \cdot \, \|$ will be the usual norm and $\| \, \cdot \, \|_A$ the one given by the $A$-inner product. Clearly, $\|f\|_A\le \|f\|$ for any $f\in\h$ (since $A\le 1$). 
In addition, 
$\b^A_s(\h)$ and $\b^A_{as}(\h)$ will denote the Banach spaces of (respectively) $A$-symmetric and $A$-anti-symmetric operators, regarded as subspaces of $\b(\h)$. The operator norm on $\b(\h)$ will be denoted by $\| \, \cdot \, \|$, meanwhile $\| \, \cdot \, \|_{\b(\ele)}$ will be the operator norm of $\b(\ele)$.
\end{nota}

\section{Preliminaries}\label{on preliminary facts}

Note that any $G\in\gg_A$ can be extended to a unitary operator $\bar{G}$ acting on $\ele$. 
In \cite{achl} it was shown that $G\in \gg_A$ if and only if $G=U|_{\h}$, where $U$ is a unitary operator on $\ele$ such that $U$ leaves the dense subspace $\h$ fixed, i.e. $U(\h)=\h$.

We shall denote by $\sigma_\h(T)$ the spectrum of $T$ as an operator in $\h$, and by $\sigma_\ele(\bar{T})$ the spectrum of its extension (when it exists) to $\ele$.  Let us recall the following  facts, 
adapted from their original broader context to our case:

\begin{teo}[M. G. Krein \cite{krein}, P. D. Lax \cite{lax}]\label{symmetrizable results}
Let $B$ be an $A$-symmetric operator. The following assertions hold:
\begin{enumerate}
\item[i)] $\bar{B}$ is bounded in $\ele$.
\item[ii)] $\sigma_\ele(\bar{B}) \subseteq \sigma_\h(B)$.
\item[iii)] If $\lambda$ belongs to the point spectrum of $B$ as an operator on $\h$, then $\lambda$ belongs to the point spectrum of $\bar{B}$ as an operator on $\ele$. Moreover, the $\lambda$-eigenspace   over $\h$ and $\ele$ is the same.
\item[iv)] If $B$ is a compact operator on $\h$, then $\bar{B}$  is a compact operator on $\ele$ and $\sigma_\ele(\bar{B})=\sigma_\h(B)$.
\end{enumerate}
\end{teo}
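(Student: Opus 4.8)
The backbone of all four items is the fact, to be proved in (i), that $\bar B$ is bounded on $\ele$: since $B$ is $A$-symmetric we have $[\bar Bf,g]=[f,\bar Bg]$ on the dense subspace $\h$, so once boundedness is known $\bar B$ is automatically \emph{self-adjoint} on the Hilbert space $\ele$, and the spectral theorem becomes available on the $\ele$-side; in particular $\sigma_\ele(\bar B)\subseteq\RR$. To prove (i) the plan is to use Lax's symmetrization estimate: from $A$-symmetry and Cauchy--Schwarz for $[\ ,\ ]$ one gets $\|Bf\|_A^2=[f,B^2f]\le\|f\|_A\,\|B^2f\|_A$, so for fixed $f$ with $\|f\|_A=1$ the sequence $a_n=\|B^nf\|_A$ satisfies $a_{n+1}^2\le a_n a_{n+2}$, i.e. $\log a_n$ is convex. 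Comparing the convex sequence with the crude bound $a_N\le\|B\|^N\|f\|$ (valid since $\|\cdot\|_A\le\|\cdot\|$) gives $\log a_1\le \tfrac1N\log a_N\to\log\|B\|$, whence $\|Bf\|_A\le\|B\|$. Thus $\bar B$ extends with $\|\bar B\|_{\b(\ele)}\le\|B\|$.

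For (ii), self-adjointness already forces $\sigma_\ele(\bar B)\subseteq\RR$, so it suffices to treat a real $\lambda\notin\sigma_\h(B)$. For such $\lambda$ the operator $B-\lambda$ is $A$-symmetric (here reality of $\lambda$ is used: $(B-\lambda)^*A=A(B-\lambda)$) and invertible in $\b(\h)$, and from this relation one checks that $(B-\lambda)^{-1}$ is again $A$-symmetric. By (i) it extends to a bounded operator on $\ele$, which by density is a two-sided inverse of $\bar B-\lambda$; hence $\lambda\notin\sigma_\ele(\bar B)$, and $\sigma_\ele(\bar B)\subseteq\sigma_\h(B)$ follows.

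Item (iii) is an equality of two eigenspaces. The inclusion $\ker(B-\lambda)\subseteq\ker(\bar B-\lambda)$ is immediate: an eigenvector $f\in\h$ is nonzero in $\ele$ because $\|f\|_A^2=[f,f]>0$ by injectivity of $A$, and $\bar Bf=Bf=\lambda f$; in particular $\lambda$ lies in the point spectrum over $\ele$. The substance is the reverse inclusion, i.e. that every $\ele$-eigenvector already lies in $\h$. My plan is to first extend $A$ to a bounded map $\ele\to\h$ (this is legitimate since $\|Af\|\le\|f\|_A$, as $A^2\le A$), so that $\langle\xi,h\rangle_\ele=\langle A\xi,h\rangle_\h$ for $\xi\in\ele$, $h\in\h$. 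For $\xi\in\ker(\bar B-\lambda)$ and $\zeta:=A\xi\in\h$ one then computes, using self-adjointness of $\bar B$ and $\lambda\in\RR$, that $\langle\zeta,(B-\lambda)h\rangle_\h=\langle(\bar B-\lambda)\xi,h\rangle_\ele=0$ for all $h\in\h$, i.e. $B^*\zeta=\lambda\zeta$. Since $A$ intertwines $B$ and $B^*$ ($AB=B^*A$), it maps $\ker(B-\lambda)$ injectively into $\ker(B^*-\lambda)$, and conversely $A^{-1}$ carries $\ker(B^*-\lambda)\cap R(A)$ back into $\ker(B-\lambda)$; so the reverse inclusion reduces to showing $\zeta\in R(A)$, equivalently $\xi\in\h$. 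This last step is the main obstacle, and it is exactly where boundedness of $B$ in the \emph{strong} norm of $\h$ must enter essentially — soft self-adjointness on $\ele$ does not by itself force eigenvectors of $\bar B$ back into $\h$. Concretely, passing to the self-adjoint model $\tilde B=U\bar BU^{-1}$ on $\h$, with $U$ the isometric extension of $A^{1/2}$ and $\tilde B A^{1/2}=A^{1/2}B$, the claim becomes $\ker(\tilde B-\lambda)\subseteq R(A^{1/2})$; the mechanism I would exploit is that an eigenvector lying outside $R(A^{1/2})$ would render $A^{-1/2}\tilde B A^{1/2}=B$ unbounded, which is visible already on finite-dimensional eigenspaces. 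Turning this into a full argument is the delicate point (and is the content of the cited results of Krein and Lax).

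Finally, (iv) should follow formally from (ii) and (iii) together with the spectral characterization of compact self-adjoint operators. If $B$ is compact on $\h$, then $\sigma_\h(B)\setminus\{0\}$ consists of eigenvalues of finite multiplicity accumulating only at $0$. By (ii) every nonzero point of $\sigma_\ele(\bar B)$ lies in this set, and by (iii) each such $\lambda$ is an eigenvalue of $\bar B$ whose eigenspace equals the finite-dimensional $\ker(B-\lambda)$; conversely every nonzero eigenvalue of $B$ is, by (iii), a nonzero eigenvalue of $\bar B$. Hence the nonzero spectrum of the self-adjoint operator $\bar B$ is precisely the sequence of these eigenvalues, of finite multiplicity and accumulating only at $0$, which is exactly the criterion for $\bar B$ to be compact. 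The same matching of nonzero eigenvalues, together with $0\in\sigma_\ele(\bar B)\cap\sigma_\h(B)$, yields $\sigma_\ele(\bar B)=\sigma_\h(B)$.
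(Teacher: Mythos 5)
The paper itself contains no proof of this theorem: it is quoted as a classical result of Krein and Lax, ``adapted from their original broader context,'' so there is no internal argument to compare against and your proposal must stand on its own. On that basis, items (i) and (ii) are correct and complete: the log-convexity estimate giving $\|Bf\|_A\le\|B\|\,\|f\|_A$ is precisely Lax's symmetrization argument (and yields Remark \ref{ext map bounded} of the paper), and your proof of (ii) --- for real $\lambda\notin\sigma_\h(B)$ the operator $(B-\lambda)^{-1}$ is again $A$-symmetric, hence by (i) extends to a bounded two-sided inverse of $\bar B-\lambda$, while non-real $\lambda$ are excluded by self-adjointness of $\bar B$ --- is sound. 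The first claim of (iii) (an eigenvector in $\h$ remains a nonzero eigenvector in $\ele$, by injectivity of $A$) is also fine.

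The genuine gap is the one you concede yourself: the equality of eigenspaces in (iii), i.e.\ that every $\ele$-eigenvector of $\bar B$ lies in $\h$. Your reduction (to $\bar A\xi\in R(A)$) is correct, but the mechanism you sketch --- that an eigenvector escaping $R(A^{1/2})$ would force $B$ to be unbounded --- cannot be turned into a proof, because the assertion is false at this level of generality: take $B=1$, which is $A$-symmetric; the $1$-eigenspace of $B$ is $\h$, that of $\bar B=1_{\ele}$ is $\ele$, and these differ whenever $A$ is not invertible. So no argument can close this step as stated; what Krein and Lax actually prove, and the only form the paper later uses (proof of Theorem \ref{components gres}, for compact $Q_1-Q_2$ and nonzero eigenvalues), is eigenspace equality for \emph{isolated} eigenvalues, e.g.\ nonzero eigenvalues of a compact $B$. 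In that setting the missing step follows from tools you already built in (ii): resolvents extend to resolvents, so for $\lambda$ isolated in $\sigma_\h(B)$ the Riesz projection $P^{\ele}_{\lambda}=\frac{1}{2\pi i}\oint_\Gamma (z-\bar B)^{-1}\,dz$ is the bounded extension of $P^{\h}_{\lambda}$; the range of $P^{\h}_{\lambda}$ is finite dimensional, hence closed in $\ele$, so density of $\h$ gives $R(P^{\ele}_{\lambda})=R(P^{\h}_{\lambda})\subseteq\h$; and self-adjointness of $\bar B$ identifies $R(P^{\ele}_{\lambda})$ with $\ker(\bar B-\lambda)$, which at the same time shows generalized eigenvectors of $B$ are genuine ones. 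Since your item (iv) invokes exactly the unproven eigenspace equality (to get finite multiplicity of the nonzero eigenvalues of $\bar B$), it inherits the gap; its skeleton --- matching nonzero spectra and applying the spectral characterization of compact self-adjoint operators --- is otherwise correct and becomes a complete proof once the Riesz-projection argument is inserted.
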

\begin{rem}\label{ext map bounded}
It is not difficult to see that if $B$ is $A$-symmetric, then
$ 
\|\bar{B}\|_{{\b}(\ele)} \leq \| B\|_{{\b}(\h)} \,  
$. Also note that $A$ itself is $A$-symmetric, and that its extension $\bar{A}$ remains positive definite.
\end{rem}

Operators which are $A$-symmetric are often called {\it symmetrizable}.

A generalization of the above results can be found in \cite{gz}. A  bounded operator acting  on $\h$ is called {\it proper} if it  has a bounded adjoint with respect to the inner product on $\ele$. This means that $B \in \b(\h)$ is proper if and only if  for every $f \in \h$, there is a vector $g \in \h$ satisfying $[Bf,h]=[h,g]$ for all $h \in \h$. This allows to define $B^+ f=g$. Actually, $B^+$ is the restriction to $\h$   of the $\ele$-adjoint of $B$. In particular, 
 $B$ is $A$-symmetric if and only if $B=B^+$.

\begin{teo}[I. C. Gohberg, M. K. Zambicki\v{\i} \cite{gz}]\label{proper op}
Let $B$ be a proper operator. The following assertions hold:
\begin{enumerate}
	\item[i)] $\bar{B}$ is bounded as an operator on $\ele$.
	\item[ii)] $\sigma_\ele(\bar{B}) \subseteq \sigma_\h(B) \cup \overline{\sigma_\h(B^+)}$, where the bar indicates complex conjugation.

  \item[iii)] If $B$ is a compact operator on $\h$, then $\bar{B}$  is a compact operator on $\ele$. Moreover, $\sigma_\ele(\bar{B})=\sigma_\h(B)$ and the eigenspaces   in  $\ele$ and $\h$ corresponding to the non zero eigenvalues  coincide.
\end{enumerate}
\end{teo}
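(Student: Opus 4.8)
The plan is to reduce all three assertions to the symmetrizable (i.e.\ $A$-symmetric) case already covered by Theorem \ref{symmetrizable results}. The crucial observation is that properness yields the operator identity $B^*A=AB^+$ (equivalently $[Bf,g]=[f,B^+g]$ for $f,g\in\h$), and that $B^+$ is itself a bounded operator on $\h$. Indeed, $B^+$ is everywhere defined by hypothesis, and its graph is closed: if $f_n\to f$ and $B^+f_n\to h$ in $\h$, then $AB^+f_n=B^*Af_n\to B^*Af=AB^+f$ while also $AB^+f_n\to Ah$, so $Ah=AB^+f$ and $h=B^+f$ since $A$ is injective; the closed graph theorem gives $B^+\in\b(\h)$. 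From $[B^+Bf,g]=[Bf,Bg]$ one then checks that $B^+B$ is $A$-symmetric and $A$-positive, with $[B^+Bf,f]=\|Bf\|_A^2$.

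For item i) I would apply item i) of Theorem \ref{symmetrizable results} to the $A$-symmetric operator $B^+B$: its extension $\overline{B^+B}$ is bounded on $\ele$, and then $\|Bf\|_A^2=[B^+Bf,f]\le\|\overline{B^+B}\|_{\b(\ele)}\,\|f\|_A^2$ for every $f\in\h$. Since $\h$ is $\|\cdot\|_A$-dense in $\ele$, this shows $\bar B$ extends to a bounded operator on $\ele$ with $\|\bar B\|_{\b(\ele)}^2\le\|\overline{B^+B}\|_{\b(\ele)}$. For item ii), I would show that whenever $\lambda\notin\sigma_\h(B)$ and $\bar\lambda\notin\sigma_\h(B^+)$, the resolvent $R=(B-\lambda)^{-1}\in\b(\h)$ is again proper, with $R^+=(B^+-\bar\lambda)^{-1}$. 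This uses that the $\ele$-adjoint of $B-\lambda$ is $B^+-\bar\lambda$, so that after the substitution $f=(B-\lambda)u$ one gets $[Rf,g]=[f,(B^+-\bar\lambda)^{-1}g]$. By item i) applied to $R$, the extension $\bar R$ is bounded on $\ele$; and since $R(B-\lambda)=(B-\lambda)R=1$ on the dense subspace $\h$, continuity gives $\bar R(\bar B-\lambda)=(\bar B-\lambda)\bar R=1$ on $\ele$. Hence $\lambda\notin\sigma_\ele(\bar B)$, which is the asserted inclusion.

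For item iii), the compactness of $\bar B$ again passes through $B^+B$: if $B$ is compact on $\h$ then $B^+B$ is compact (a bounded operator times a compact one) and $A$-symmetric, so item iv) of Theorem \ref{symmetrizable results} makes $\overline{B^+B}=\bar B^*\bar B$ compact on $\ele$; writing the polar decomposition $\bar B=V|\bar B|$ with $|\bar B|=(\bar B^*\bar B)^{1/2}$ compact then forces $\bar B$ to be compact. One inclusion for the spectra is immediate: an eigenvector $f\in\h$ with $Bf=\lambda f$ is nonzero in $\ele$ (as $\|f\|_A>0$) and satisfies $\bar Bf=\lambda f$, so $\ker_\h(B-\lambda)\subseteq\ker_\ele(\bar B-\lambda)$. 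The reverse direction is the main obstacle, since a priori an eigenvector of $\bar B$ lives only in $\ele$, and I would settle it with Riesz spectral projections. For $\lambda\neq0$ choose a small circle $\Gamma$ separating $\lambda$ from $0$ and from the rest of the countable, $0$-accumulating set $\sigma_\h(B)\cup\overline{\sigma_\h(B^+)}$, so that by the argument of item ii) every $(z-B)^{-1}$ with $z\in\Gamma$ is proper with extension $(z-\bar B)^{-1}$. The finite-rank $\h$-Riesz projection $P=\frac{1}{2\pi i}\oint_\Gamma(z-B)^{-1}\,dz$ then has an extension that, by evaluating the defining Riemann sums on a fixed $f\in\h$ (where $\|\cdot\|_A\le\|\cdot\|$), coincides with the $\ele$-Riesz projection $\frac{1}{2\pi i}\oint_\Gamma(z-\bar B)^{-1}\,dz$. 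Because $R(P)$ is finite-dimensional it is $\|\cdot\|_A$-closed and equals the range of its extension; thus the extension is nonzero exactly when $P$ is, giving the equality of the nonzero spectra and the coincidence of the (common, finite-dimensional) spectral subspaces, hence of the eigenspaces at $\lambda\neq0$. Finally $0$ lies in both spectra, $\h$ and $\ele$ being infinite-dimensional.
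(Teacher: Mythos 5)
Your items i) and ii), and the compactness half of item iii), are correct, and this is genuine added value: the paper never proves this theorem, it only quotes it from Gohberg--Zambicki\u{\i} \cite{gz}, so there is no internal proof to compare against. The derivation of $B^*A=AB^+$ and of $B^+\in\b(\h)$ by the closed graph theorem, the reduction of i) to the $A$-symmetric operator $B^+B$ via Cauchy--Schwarz, the verification in ii) that $(B-\lambda)^{-1}$ is proper with $\ele$-adjoint $(B^+-\bar\lambda)^{-1}$, and the compactness of $\bar B$ through $\overline{B^+B}=\bar B^*\bar B$ and the polar decomposition are all sound.

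The gap is in the spectral part of iii). You choose the circle $\Gamma$ so as to avoid ``the countable, $0$-accumulating set $\sigma_\h(B)\cup\overline{\sigma_\h(B^+)}$'', but nothing guarantees that $\sigma_\h(B^+)$ is countable or accumulates only at $0$: properness does not transfer compactness to $B^+$. Concretely, on $\h=\ell^2$ take $A=\mathrm{diag}(2^{-n^2})$ and $Be_n=2^{-(2n-1)}e_{n-1}$ (with $Be_1=0$). Then $B$ is compact, and the identity $AB^+=B^*A$ forces $B^+e_n=e_{n+1}$, the unilateral shift; so $B$ is proper and compact while $\sigma_\h(B^+)$ is the whole closed unit disc. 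Taking the direct sum with a one-dimensional summand ($\h\oplus\CC$, $A\oplus 1$, $B\oplus 1$) produces a compact proper operator with the nonzero eigenvalue $\lambda=1$, around which no circle $\Gamma$ of the kind you require exists, since every circle around $1$ meets the closed unit disc. So the argument, as written, cannot be run, even though the conclusion is true. The repair is short and preserves your scheme: since you have already shown that $\bar B$ is compact on $\ele$, the set $\sigma_\h(B)\cup\sigma_\ele(\bar B)$ \emph{is} countable and accumulates only at $0$; choose $\Gamma$ around $\lambda$ avoiding it and enclosing none of its other points nor $0$. For $z\in\Gamma$ you then need no properness of the resolvent at all: both $(z-B)^{-1}\in\b(\h)$ and $(z-\bar B)^{-1}\in\b(\ele)$ exist, and for $g\in\h$, putting $f=(z-B)^{-1}g\in\h$ gives $(z-\bar B)f=g$, so $(z-\bar B)^{-1}$ automatically extends $(z-B)^{-1}$. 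With that substitution, your Riemann-sum identification of the two Riesz projections, the finite-dimensionality argument $\bar P(\ele)\subseteq\overline{P(\h)}=P(\h)$, and the conclusions on spectra and eigenspaces all go through.
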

 
\noi The following result will also be useful.

\begin{teo}[J. Dieudonn\'e \cite{dieudonne}]\label{Quasiherm}
Let $B$ be an $A$-symmetric operator. Then there is a unique symmetric operator $X$ in $\h$ such that $A^{1/2}B=XA^{1/2}$.
\end{teo}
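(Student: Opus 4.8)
The plan is to give rigorous meaning to the formal expression $X=A^{1/2}BA^{-1/2}$ by defining it on the dense range of $A^{1/2}$. Since $A$ is positive and injective, $A^{1/2}$ is injective as well, and consequently its range $R(A^{1/2})$ is dense in $\h$. I would first define a linear map $X_0$ on $R(A^{1/2})$ by
$$
X_0(A^{1/2}f)=A^{1/2}Bf, \qquad f\in\h.
$$
This prescription is unambiguous precisely because $A^{1/2}$ is injective, so every vector of $R(A^{1/2})$ is $A^{1/2}f$ for a unique $f$.

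The key step is boundedness. Recalling that $\|A^{1/2}g\|^2=<Ag,g>=[g,g]=\|g\|_A^2$, the desired estimate $\|X_0(A^{1/2}f)\|\le C\,\|A^{1/2}f\|$ is nothing but $\|Bf\|_A\le C\,\|f\|_A$, that is, the boundedness of the extension $\bar{B}$ on $\ele$. This is exactly what Theorem \ref{symmetrizable results} (i), together with Remark \ref{ext map bounded}, provides: $\|Bf\|_A=\|\bar{B}f\|_{\ele}\le \|B\|\,\|f\|_A$. Hence $X_0$ is bounded on the dense subspace $R(A^{1/2})$, with $\|X_0\|\le \|B\|$, and extends uniquely to a bounded operator $X$ on all of $\h$. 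By construction this $X$ satisfies $A^{1/2}B=XA^{1/2}$.

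It then remains to verify symmetry and uniqueness, both of which are routine. For symmetry, by density it suffices to compare $<XA^{1/2}f,A^{1/2}g>$ and $<A^{1/2}f,XA^{1/2}g>$ for $f,g\in\h$. Using the defining identity and the $A$-symmetry hypothesis written as $AB=B^*A$, one computes that both reduce to $<Af,Bg>$, whence $X=X^*$. Uniqueness is immediate: if $X_1$ and $X_2$ both satisfy $A^{1/2}B=X_iA^{1/2}$, then $X_1-X_2$ vanishes on the dense set $R(A^{1/2})$ and is therefore zero; note that uniqueness does not even require symmetry.

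The only genuine obstacle is the boundedness estimate, since $A^{-1/2}$ is unbounded when $A$ fails to be invertible, so $X$ cannot be produced by naive composition. Everything hinges on reinterpreting the estimate in terms of the $A$-norm, where the Krein--Lax boundedness result applies directly. Once $X$ is known to be bounded, its self-adjointness is a transparent consequence of the relation $AB=B^*A$, and uniqueness follows purely from the density of $R(A^{1/2})$.
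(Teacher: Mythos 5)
Your proof is correct, but note that the paper itself offers no proof of this statement: Theorem \ref{Quasiherm} is quoted as a known result of Dieudonn\'e \cite{dieudonne}, so there is no internal argument to compare against. Your route is the natural one given the paper's preliminaries. The observation $\|A^{1/2}g\|^2=<Ag,g>=\|g\|_A^2$ correctly converts the needed estimate $\|A^{1/2}Bf\|\le C\|A^{1/2}f\|$ into the boundedness of $\bar{B}$ on $\ele$ with $\|\bar{B}\|_{\b(\ele)}\le\|B\|$, which is exactly Theorem \ref{symmetrizable results}(i) plus Remark \ref{ext map bounded}; well-definedness of $X_0$ follows from injectivity of $A^{1/2}$, density of $R(A^{1/2})$ from its injectivity and self-adjointness, and your symmetry computation (both inner products reduce to $<Af,Bg>$ via $AB=B^*A$) and density-based uniqueness are sound. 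Two remarks on what this buys and what it hides: (1) your argument exhibits Dieudonn\'e's theorem as a corollary of the Krein--Lax boundedness theorem, whereas the classical proofs establish that boundedness by an iteration intrinsic to $A$-symmetry (each power $B^n$ is again $A$-symmetric, whence $\|Bf\|_A^2=<AB^2f,f>\le\|B^2f\|_A\|f\|_A$, and iterating yields $\|Bf\|_A\le\|B\|\,\|f\|_A$); so the two statements are essentially equivalent, and within the paper's logical structure, where Theorem \ref{symmetrizable results} is taken as given, there is no circularity; (2) your parenthetical that uniqueness needs no symmetry tacitly uses that the competing operators are bounded (or everywhere-defined symmetric, hence bounded by Hellinger--Toeplitz), which is consistent with the paper's convention that operators lie in $\b(\h)$.
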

We finish this section on preliminaries by giving a characterization of the case when $\b_s^A(\h)$ is a complemented (real) subspace of $\b(\h)$. Note that $\b_{as}^A(\h)=i \b_s^A(\h)$, and therefore the first subspace is complemented if and only if the second is, and $\mathbb{S}$ is a 
supplement for $\b_s^A(\h)$ if and only if $i\mathbb{S}$ is a supplement for $\b_{as}^A(\h)$.

Consider the Hilbert space $\h\times\h$ and the operator $A_0$ on $\h\times\h$ given by
$$
A_0=\left( \begin{array}{ll} 0 & A \\ 0 & 0 \end{array} \right).
$$
Note that $A_0^2=0$. Recall that $A$ is  injective. Then a straightforward computation shows that the  operator $B\in\b(\h\times\h)$ commutes with $A_0$ if and only if
$$
B=\left( \begin{array}{ll} X & Y \\ 0 & Z \end{array} \right),
$$
with $XA=AZ$.
\begin{teo}\label{equivalence compl}
The real subspace $\b_s^A(\h)$ is complemented in $\b(\h)$ if and only if the commutant of $A_0$ is complemented (as a complex subspace) in $\b(\h\times \h)$.
\end{teo}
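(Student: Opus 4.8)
The plan is to strip away the trivial directions inside the commutant of $A_0$, reducing the question to the complementability of a single complex subspace of $\b(\h)\times\b(\h)$, and then to recognize that subspace as the complexification of a copy of $\b_s^A(\h)$. Complementability can then be transferred between a $\sigma$-invariant complex subspace and its real form by an averaging argument, which closes the equivalence.

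First I would record, as in the computation preceding the statement, that writing an element of $\b(\h\times\h)$ as a $2\times2$ operator matrix with entries $X,Y$ (first row) and $W,Z$ (second row) identifies $\b(\h\times\h)$ with $\b(\h)^4$, and that the commutant of $A_0$ is exactly $\{W=0\}\cap\{XA=AZ\}$; in the splitting by the coordinates $(X,Z)$, $Y$, $W$ it equals $M\oplus\b(\h)\oplus\{0\}$, where $M=\{(X,Z)\in\b(\h)^2:XA=AZ\}$. Since the $W$-coordinate contributes $\{0\}$ and the $Y$-coordinate the full space $\b(\h)$, both trivially complemented, a bounded complex-linear projection onto the commutant exists iff one onto $M$ exists in $\b(\h)^2$: given a complement $S$ of $M$ one checks that $S\oplus\{0\}_Y\oplus\b(\h)_W$ complements the commutant, while conversely composing a projection onto the commutant with the inclusion and coordinate projection of the $(X,Z)$-factor yields a bounded projection onto $M$. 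This reduces the theorem to the statement that $M$ is complemented in $\b(\h)^2$ as a complex subspace iff $\b_s^A(\h)$ is complemented in $\b(\h)$ as a real subspace.

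Next I would introduce on $\b(\h)^2$ the conjugate-linear isometric involution $\sigma(X,Z)=(Z^*,X^*)$. Taking adjoints in $XA=AZ$, and using $A=A^*$, shows that $M$ is $\sigma$-invariant, so $M$ splits over $\RR$ as $M=M^\sigma\oplus iM^\sigma$ via $m\mapsto\tfrac12(m+\sigma m)+\tfrac12(m-\sigma m)$; that is, $M$ is the complexification of its real form $M^\sigma=M\cap(\b(\h)^2)^\sigma$. Since $(\b(\h)^2)^\sigma=\{(Z^*,Z):Z\in\b(\h)\}$, the real-linear isometry $Z\mapsto(Z^*,Z)$ carries $\b(\h)$ onto $(\b(\h)^2)^\sigma$, and because $(Z^*,Z)\in M$ means precisely $Z^*A=AZ$, it carries $\b_s^A(\h)$ onto $M^\sigma$. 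Thus it remains to prove the general complexification principle: a $\sigma$-invariant complex subspace $M$ is complemented in the complex space $\b(\h)^2$ iff its real form $M^\sigma$ is complemented in the real space $(\b(\h)^2)^\sigma$.

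For this last step I would argue by averaging. If $P$ is a bounded complex-linear projection onto $M$, then $\sigma P\sigma$ is again a bounded complex-linear projection onto $\sigma(M)=M$ (two conjugations compose to a complex-linear map, and $\sigma$ is isometric so the norm is controlled), whence $\tilde P=\tfrac12(P+\sigma P\sigma)$ is a $\sigma$-equivariant bounded projection onto $M$; its restriction to $(\b(\h)^2)^\sigma$ is then a real bounded projection onto $M^\sigma$. Conversely, a real bounded projection $Q$ onto $M^\sigma$ complexifies to the map $u+iv\mapsto Qu+iQv$, a bounded complex-linear projection onto $M^\sigma\oplus iM^\sigma=M$. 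Chaining the three equivalences yields the theorem. I expect the only genuine subtlety to be bookkeeping: verifying that $\sigma$ truly leaves $M$ invariant (this is precisely where the self-adjointness of $A$ is used) and that the averaged and complexified maps are honest bounded idempotents with the stated ranges; the reduction of the commutant to $M$ and the averaging are otherwise routine.
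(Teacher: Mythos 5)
Your proposal is correct, and its skeleton coincides with the paper's: the same reduction of the commutant to ${\cal D}=\{(X,Z)\in\b(\h)\times\b(\h): XA=AZ\}$ (your $M$), the same real splitting ${\cal D}={\cal D}_s\oplus{\cal D}_{as}$ (your $M=M^\sigma\oplus iM^\sigma$, since ${\cal D}_{as}=i\,{\cal D}_s$), and the same identification of ${\cal D}_s$ with $\b_s^A(\h)$ via $T\mapsto (T^*,T)$, whose image $\mathbb{D}$ is exactly your fixed-point space $(\b(\h)^2)^\sigma$. Where you genuinely depart is in how complementability is transferred between the complex subspace and its real form. The paper argues by explicit constructions in both directions: from a real complement $\mathbb{S}$ of $\b_s^A(\h)$ it exhibits the complex complement $\mathbb{T}=\{(R^*,R)+(-T^*,T): R\in\mathbb{S},\ T\in i\mathbb{S}\}$ of ${\cal D}$ (this is precisely the complexification of the image of $\mathbb{S}$, so it matches your complexified projection $u+iv\mapsto Qu+iQv$ in content); and conversely it produces a bounded idempotent $E$ onto ${\cal D}_s$ from the refined decomposition $\mathbb{T}\oplus{\cal D}_{as}\oplus{\cal D}_s$ and simply restricts it to $\mathbb{D}$ --- no symmetrization is needed there, because the range ${\cal D}_s$ of $E$ already lies inside $\mathbb{D}$, so $E|_{\mathbb{D}}$ is automatically a real idempotent onto ${\cal D}_s$. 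Your averaging $\tilde P=\tfrac12(P+\sigma P\sigma)$ is the one device the paper does not use: it is a standard trick, and it lets you state and prove a clean general principle (any $\sigma$-invariant complemented complex subspace has complemented real form, for any isometric conjugate-linear involution $\sigma$), making the argument modular and reusable; the cost is the extra equivariance and boundedness bookkeeping you flag. The paper's route is more hands-on and self-contained, yours is more conceptual; both are complete proofs of the equivalence.
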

\begin{proof}
By the form of the commutant of $A_0$, namely
$$
\left( \begin{array}{ll} 0 & Y \\ 0 & 0 \end{array} \right)\oplus \left( \begin{array}{ll} X & 0 \\ 0 & Z \end{array} \right),
$$
with $XA=AZ$, it is apparent that it is complemented if and only if the complex subspace
$$
{\cal D}=\{(X,Z)\in \b(\h)\times\b(\h): XA=AZ\}
$$ is complemented in $\b(\h)\times\b(\h)$. Note that ${\cal D}$ decomposes as a direct sum by means of
$$
(X,Z)=\frac12 (X+Z^*, X^*+Z)+\frac12 (X-Z^*,Z-X^*),
$$
where $X^*+Z\in \b_s^A(\h)$ and $Z-X^*\in \b_{as}^A(\h)$. Indeed,
$$
(X^*+Z)^*A=XA+Z^*A=AZ+(AZ)^*=AZ+(XA)^*=A(Z+X^*),
$$
and similarly for $Z-X^*$. Thus 
${\cal D}={\cal D}_s\oplus {\cal D}_{as}$, where
$$
{\cal D}_s=\{(V^*,V): V\in\b_s^A(\h)\}
$$
and 
$$
{\cal D}_{as}=\{(W^*,-W): W\in\b_{as}^A(\h)\}.
$$
It is apparent that ${\cal D}_s\cap {\cal D}_{as}=\{0\}$.

Suppose first that $\b_s^A(\h)\oplus \mathbb{S}=\b(\h)$ for some closed real subspace $\mathbb{S}\subset \b(\h)$, so that also $\b_{as}^A(\h)\oplus i\mathbb{S}=\b(\h)$. Then
$$
\mathbb{T}=\{(R^*,R)+(-T^*,T): R\in\mathbb{S}, T\in i\mathbb{S}\}
$$
is a supplement for ${\cal D}$ (note that the sum $(R^*,R)+(-T^*,T)$ is direct), and moreover, it is a complex subspace of $\b(\h)\times \b(\h)$. Indeed, if $(R^*,R)+(-T^*,T)\in\mathbb{T}$, then 
$$
i((R^*,R)+(-T^*,T))=((iT)^*,iT)+((iR)^*,-iR)\in \mathbb{T},
$$
because $iT\in \mathbb{S}$ and $iR\in i\mathbb{S}$.

Conversely, suppose that ${\cal D}$ is complemented in $\b(\h)\times\b(\h)$,
$$
\b(\h)\times\b(\h)=\mathbb{T}\oplus {\cal D}=\mathbb{T}\oplus {\cal D}_{as}\oplus {\cal D}_s.
$$
Then the real subspace ${\cal D}_s=\{  (V^*,V)  :  V\in\b_s^A(\h)  \}$ is complemented in $\b(\h)\times\b(\h)$.
Let $E:\b(\h)\times\b(\h)\to {\cal D}_s\subset \b(\h)\times\b(\h)$ be an idempotent projecting onto ${\cal D}_s$. Put $$\
\mathbb{D}=\{  (T^*,T)  : T\in\b(\h) \}.
$$
Then ${\cal D}_s\subset \mathbb{D}$ and 
$$
E|_{\mathbb{D}}:\mathbb{D}\to {\cal D}_s\subset \mathbb{D}
$$
is an idempotent operator, and the set of pairs $(V^*,V)$, $V\in\b_s^A(\h)$ are complemented in the set of pairs $(T^*,T)$, $T\in\b(\h)$. It follows that $\b_s^A(\h)$ is complemented in $\b(\h)$.
\end{proof}

Apparently, if $A$ is invertible, the kernel of the nilpotent derivation on $\b(\h \times \h)$ given by $\delta_{A_0}(X)=XA_0-A_0X$ is complemented. We conjecture that this is also a necessary condition. Derivations with closed (and complemented) range have been characterized. However, to our knowledge, there are no results characterizing derivations with complemented kernel.

\section{Examples of compatible subspaces}\label{eamples of compatibility}
First note that there exist non-compatible subspaces if $A$ is non invertible \cite{cms}. 
It is not difficult to see that if $\s$ is finite dimensional, then it is $A$-compatible. The same holds for finite co-dimensional subspaces. If $P$ is an orthogonal projection which commutes with $A$, then $P$ is $A$-symmetric (for instance, if $P$ is a spectral projection of $A$). Therefore $R(P)$ is compatible. We shall call these special type of compatible subspaces (ranges of orthogonal projection commuting with $A$), {\it commuting subspaces}.

In \cite{achl} several examples of $A$-unitary operators where considered. One way to obtain compatible subspaces is by taking $\s=G(\s_0)$, where $\s_0$ is commuting and $G\in \gg_A$.
\begin{ejem}
Let $\h=H^1_0(0,1)$, the subspace of the Sobolev space obtained as the closure with respect to the inner product
\begin{equation}\label{inner prod sob}
<f,g>=\int_0^1 f(t)\bar{g}(t) dt+\int_0^1 f'(t)\bar{g'}(t) dt
\end{equation}
of smooth functions on $(0,1)$ with compact support. The second inner product $[\ , \ ]$ is the usual $L^2(0,1)$ inner product, so that $\ele=L^2(0,1)$. In this case the operator $A$ which implements $[\ , \ ]$ is the solution operator of the Sturm-Liouville problem
$$
\left\{ \begin{array}{l} u-u''=f \\ u(0)=u(1)=0, \end{array} \right. 
$$
that is, $Af=u$, the unique solution $u$ of the equation above for a given $f\in \h$. In this case $A$ is compact, and it is diagonalized with the eigenvectors $s_k(t)=\frac{\sqrt2}{\sqrt{k^2\pi^2+1}}\sin(k\pi t)$ with eigenvalues $\lambda_k=(k^2\pi^2+1)^{-1}$. Thus $A$ is an infinite diagonal matrix with different positive numbers in the diagonal. Therefore any operator commuting with $A$ is also diagonal. In particular, any commuting subspace $\s_0$ is generated by a collection of eigenvectors $s_k$. Pick any infinite proper collection of $s_k$, containing $s_1$ and $s_2$. Consider the operator $G=M_{e^{i\pi t}}|_{H^1_0(0,1)}$, i.e. $(Gf)(t)=e^{i\pi t}f(t)$. Clearly, $M_{e^{i\pi t}}$ is a unitary operator on $L^2(0,1)$, and also it is apparent that 
$$
M_{e^{i\pi t}}(H^1_0(0,1))\subset H^1_0(0,1)
$$
and 
$$
M_{e^{i\pi t}}^{-1}(H^1_0(0,1))=M_{e^{-i\pi t}}(H^1_0(0,1))\subset H^1_0(0,1),
$$
i.e., by the above remark, $G\in \gg_A$. An elementary computation shows that $<G(s_1),s_j>\ne 0$  for all $j$ odd, and $<G(s_2),s_k>\ne 0$ for all $k$ even. It follows that $G(\s_0)$ is a compatible subspace, which is non  commuting.
\end{ejem}
\begin{ejem}\label{ej sobolev}
Consider $\h=H^1(\mathbb{R})\subset L^2(\mathbb{R})$, or rather, its Fourier transform 
$$\h=\{\, f\in L^2(\mathbb{R}) \,: \, (1+x^2)^{1/2}f(x)\in L^2(\mathbb{R}) \, \},$$ with the complete inner product
$$
<f,g>=\int_{\mathbb{R}} (1+x^2)f(x)\bar{g}(x) d x.
$$
The second inner product is the usual $L^2$-inner product, and the operator $A$ implementing it in $\h$ is 
$Af(x)=\frac{1}{1+x^2}f(x)$. Consider the reflection $Rf(x)=f(-x)$. A simple computation shows that $R$ is symmetric on $\h$, and that it commutes with $A$. Then the closed subspace $\s_0=\{f\in\h: Rf=f\}$ of a.e. even functions, is a commuting subspace. For $a\in\mathbb{R}$, consider the translation operator $T_af(x)=f(x-a)$. Note that $T_a$ is a unitary operator on $L^2$, which leaves $\h$ fixed. Therefore $T_a\in \gg_A$. Note also that 
$$
T_aRT_{-a}f(x)=f(-x+2a),
$$
so that  $\s_a=T_a(\s_0)=\{f\in\h: f(x)=f(-x+2a) \hbox{ a.e.}\}$ is a compatible subspace. Another simple computation shows that $T_aRT_{-a}$ is not symmetric if $a\ne 0$. The subspace $\s_a$  is non commuting if $a\ne 0$. Indeed,
$$
Q_{\s_a}=T_aQ_{\s_0}T_{-a}=\frac12 (1+T_aRT_{-a})
$$
is not symmetric.
\end{ejem} 
Both examples are constructed as translations of a commuting subspace. A natural question would be, if every compatible subpace $\s$ is of the form $\s=G(\s_0)$, where $\s_0$ is a commuting subspace and $G\in\gg_A$. 
The following simple example shows that in general this is not the case.
\begin{ejem}
Let $\h=L^2(0,1)$ with the usual inner product, and $$\ele=\{f\in\ele:  f(t)t^{1/2}\in  L^2(0,1)\}$$ given by $A=M_t$, $(Af)(t)=tf(t)$, i.e.
$$
[f,g]=\int_0^1 tf(t)\overline{g(t)} dt.
$$
The operator $A$ generates a maximal abelian von Neumann algebra in $\b(\h)$ (namely $L^\infty(0,1)$ acting as multiplication operators). It follows that any non trivial  projection commuting with $A$ is of the form $P=M_{\chi_\Delta}$, where $\chi_\Delta$ is the characteristic function of a measurable set $\Delta$ of positive Lebesgue measure. It follows that any 
commuting subspace is infinite and co-infinite dimensional. Pick any finite dimensional subspace $\s$. Then $\s$ is compatible, but it is not of the form $G(\s_0)$ for any commuting subspace $\s_0$.
\end{ejem}

\begin{rem}\label{rem connected}
In \cite{chm} it was shown that for any pair of finite dimensional subspaces $\s_1, \s_2$ of the same dimension, there exists an element $G\in\gg_A$ such that $G(\s_1)=\s_2$. Moreover,  it was remarked that the element $G$ constructed is of the form $1+$ finite rank. In \cite{achl} it was shown that such elements $G$ are exponentials: there exists a finite rank element in $Z\in \b_{as}^A(\h)$ such that $G=e^Z$. Then the curve $e^{tZ}Q_{\s_1}e^{-tZ}$ joins $Q_{S_1}$ and $Q_{S_2}$. Thus  the subspaces $\s_1$ and $\s_2$ lie in the same connected component of $Gr_A$.
\end{rem}

There is an alternative characterization for compatible subspaces in terms of extensions to the Hilbert space $\ele$:

\begin{prop} \label{caract comp}
Let $\s$ be a closed subspace of $\h$. Denote by $\overline{\s}$ its closure in $\ele$. Then $\s$ is $A$-compatible if and only if $\overline{\s}\cap \h=\s$ and the orthogonal projection $P_{\overline{\s}} \in \b(\ele)$ satisfies $P_{\overline{\s}}(\h)\subseteq \h$.
\end{prop}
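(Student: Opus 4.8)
The plan is to establish both implications by relating the idempotent $Q_\s$ on $\h$ to the orthogonal projection $P_{\overline{\s}}$ on $\ele$, using the fact that $A$-symmetric idempotents extend to self-adjoint idempotents (hence orthogonal projections) on $\ele$. The guiding principle is that $A$-orthogonality on $\h$ becomes genuine orthogonality in the completion $\ele$, so the $A$-compatible decomposition $\h=\s\,\dot{+}\,A(\s)^\perp$ should correspond to the $\ele$-orthogonal decomposition induced by $P_{\overline{\s}}$.

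First I would prove the forward implication. Assume $\s$ is $A$-compatible, so there is a (unique) idempotent $Q_\s\in\b(\h)$ with $R(Q_\s)=\s$, $N(Q_\s)=A(\s)^\perp$, and $Q_\s$ is $A$-symmetric, i.e.\ $Q_\s^*A=AQ_\s$. Being $A$-symmetric, $Q_\s$ extends to a bounded operator $\overline{Q_\s}$ on $\ele$ by Theorem \ref{symmetrizable results}(i), and this extension is self-adjoint for the $\ele$-inner product since $Q_\s=Q_\s^+$. Moreover $\overline{Q_\s}$ is idempotent because $Q_\s^2=Q_\s$ holds on the dense subspace $\h$ and the extension is continuous; hence $\overline{Q_\s}$ is an orthogonal projection on $\ele$. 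I claim $\overline{Q_\s}=P_{\overline{\s}}$: indeed $R(\overline{Q_\s})\supseteq R(Q_\s)=\s$, and since $\overline{Q_\s}$ is a bounded idempotent its range is closed in $\ele$ and contains the $\ele$-closure $\overline{\s}$; conversely the range is contained in $\overline{\s}$ by continuity, so $R(\overline{Q_\s})=\overline{\s}$, and an orthogonal projection is determined by its range. Now $P_{\overline{\s}}(\h)=\overline{Q_\s}(\h)=Q_\s(\h)=\s\subseteq\h$, which gives the second condition. For the first condition, $\overline{\s}\cap\h\supseteq\s$ is trivial; conversely if $f\in\overline{\s}\cap\h$ then $P_{\overline{\s}}f=f$, so $\overline{Q_\s}f=f$, hence $Q_\s f=f$ and $f\in R(Q_\s)=\s$, giving $\overline{\s}\cap\h=\s$.

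For the converse, suppose $\overline{\s}\cap\h=\s$ and $P_{\overline{\s}}(\h)\subseteq\h$. I would define $Q=P_{\overline{\s}}|_{\h}$, which maps $\h$ into $\h$ by hypothesis; it is bounded as an operator on $\h$ because the inclusion $\h\hookrightarrow\ele$ is continuous together with the closed graph theorem (or directly, since $\|\cdot\|_A\le\|\cdot\|$ controls one direction and one checks boundedness on $\h$). Then $Q^2=Q$ since $P_{\overline{\s}}$ is idempotent. The operator $Q$ is $A$-symmetric because for $f,g\in\h$, $[Qf,g]=\langle P_{\overline{\s}}f,g\rangle_\ele=\langle f,P_{\overline{\s}}g\rangle_\ele=[f,Qg]$, using self-adjointness of $P_{\overline{\s}}$ on $\ele$; equivalently $Q^*A=AQ$. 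Finally I must verify $R(Q)=\s$: clearly $R(Q)=P_{\overline{\s}}(\h)\subseteq\overline{\s}\cap\h=\s$, and conversely every $f\in\s\subseteq\overline{\s}$ satisfies $P_{\overline{\s}}f=f$, so $f\in R(Q)$. Thus $Q$ is an $A$-symmetric idempotent with range $\s$, so $\s$ is $A$-compatible.

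I expect the main obstacle to be the careful bookkeeping of boundedness and the identification $R(\overline{Q_\s})=\overline{\s}$ in the forward direction, where one must argue that a bounded idempotent on the Hilbert space $\ele$ has closed range equal to the $\ele$-closure of $\s$ rather than something larger; the symmetry condition guarantees the extension is an \emph{orthogonal} projection, which pins down the range via its fixed-point set. The hypothesis $\overline{\s}\cap\h=\s$ is exactly what prevents pathologies where the $\ele$-closure captures extra vectors of $\h$, and its role in both directions should be highlighted.
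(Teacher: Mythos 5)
Your proof is correct and follows essentially the same route as the paper: extend the $A$-symmetric idempotent $Q_\s$ to an orthogonal projection on $\ele$ and identify it with $P_{\overline{\s}}$ for the forward direction, then restrict $P_{\overline{\s}}$ to $\h$ for the converse. The extra details you supply (closed range of the extended idempotent, and the closed graph theorem for boundedness of $P_{\overline{\s}}|_{\h}$ in the $\h$-norm) are points the paper leaves implicit, and your treatment of them is sound.
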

\begin{proof}
Suppose that $\s\subset \h$ is compatible, then there exists $Q_\s$, the unique idempotent with range $\s$ which is $A$-symmetric. Note that  $Q_\s$ extends to an orthogonal projection $\bar{Q}_\s$ on $\ele$, with $\s \subset R(\bar{Q}_\s)\subset \overline{\s}$, i.e. $P_{\overline{\s}}=\bar{Q}_\s$. Thus 
$$
P_{\overline{\s}}(\h)=Q_\s(\h)=\s\subset \h.
$$
Clearly, $\s \subset \overline{\s}\cap \h$. If $f\in \overline{\s}\cap \h$, then $f=P_{\overline{\s}}(f)=Q_\s(f)\in\s$.

Conversely, suppose that $\s= \overline{\s}\cap \h$ and that $P_{\overline{\s}}(\h)\subset \h$. Then $P_{\overline{\s}}|_{\h}$ is an $A$-symmetric idempotent in $\h$. Its range is $P_{\overline{\s}}(\h)=\overline{\s}\cap\h=\s$.
\end{proof}

The following example shows that the hypothesis   $\overline{\s}\cap \h=\s$ in the above proposition does not follow from the assumption $P_{\overline{\s}}(\h)\subseteq \h$.

\begin{ejem} \label{no equiv assump}
Let $H^1(0,1)$ be the space of functions $f \in L^2(0,1)$  that have  weak derivative $f' \in L^2(0,1)$. It is a Hilbert space with the norm given by equation (\ref{inner prod sob}) and $H_0^1(0,1)$ is a proper closed subspace of $H^1(0,1)$ (see e.g. \cite{treves}). Set $\s=H_0^1(0,1)$, $\h=H^1(0,1)$ and $\ele=L^2(0,1)$. Since $H_0 ^1(0,1)$ is a dense subspace of $L^2(0,1)$, then $\overline{\s}\cap \h= \ele \cap \h=\h\neq  \s$. However, the orthogonal projection onto $\overline{\s}=\ele$, which is the identity map, trivially leaves $\h$ invariant.   
\end{ejem}
Let $B$ be an $A$-symmetric operator. Pick $f \in \overline{\ker(B)} \cap \h$. Then there is a sequence $(f_n)_n $ such that 
$Bf_n=0$ and $f_n \to f$ in the norm of $\ele$. Since $B$ is $\ele$-continuous by  Theorem \ref{symmetrizable results}, one obtains 
that $Bf=\lim B f_n=0$, and consequently, $f \in \ker(B)$. Hence $\overline{\ker(B)} \cap \h=\ker(B)$. Despite of being satisfied one of the assumptions
in Proposition \ref{caract comp}, the following example shows that the kernel of an $A$-symmetric operator is not a compatible subspace in general.

\begin{ejem}
Consider again the Sobolev space $\h=H^1(0,1)$ as in Example \ref{no equiv assump}. Take a smooth 
function $\theta: (0,1) \to \RR$ which is 
equal to zero   in a proper subinterval $(0,a]$ of $(0,1)$ and positive in the complement $(a,1)$. Recall that $H^1(0,1)$ is a dense subspace of $\ele=L^2(0,1)$, and each function in $H^1(0,1)$ admits an absolutely continuous representative. Consider the multiplication operator
\[  \bar{M}_{\theta}: L^2(0,1) \to L^2(0,1) , \, \, \, \, \, \, \bar{M}_{\theta} f = \theta f . \]
Since $\theta$ is a real-valued function, this operator is self-adjoint. Moreover, $\bar{M}_{\theta}$ leaves $H^1(0,1)$ invariant because 
the function $\theta$ is smooth. Therefore $\bar{M}_{\theta}$  is the extension of an $A$-symmetric operator $M_{\theta}$ 
acting on  $H^1(0,1)$. Clearly, the kernel of $M_{\theta}$
 is given by
\[  \ker(M_{\theta})= \{ \,  f \in H^1(0,1)  \, : \,  f\equiv 0  \text{ on } [a,1)   \,  \}.   \]
On the other hand, $\ker(\bar{M}_{\theta})$  is orthogonal to $R(\bar{M}_{\theta})$ due to the fact that $\bar{M}_{\theta}$ is 
self-adjoint. 
The constant function $f_0 \equiv 1$ belongs to $H^1(0,1)$, and  can be written as the sum of two characteristic functions
\[  f_0= \chi_{(0,a]} + \chi_{(a,1)} \, .   \]
 Clearly  $ \chi_{(0,a]} \in \overline{\ker(M_{\theta})}=\ker(\bar{M}_{\theta})$ and  this sum is   orthogonal  with respect the inner product of $L^2(0,1)$. Therefore $P_{\overline{\ker(M_{\theta})}} (f_0) =\chi_{(0,a]} \notin H^1(0,1)$, and consequently, $\ker(M_{\theta})$ is not a compatible subspace.   
\end{ejem}

\section{Differentiable structure of $Gr_A$}\label{diff structure gra}

Let us transcribe the following result contained in the appendix of the paper
\cite{rae}   by I. Raeburn, which is a consequence of the implicit function
theorem in Banach spaces.

\begin{lem}\label{raeburn}
Let $G$ be a Banach-Lie group acting smoothly on a Banach space $X$. For a fixed
$x_0\in X$, denote by $\pi_{x_0}:G\to X$ the smooth map $\pi_{x_0}(g)=g\cdot
x_0$. Suppose that
\begin{enumerate}
\item
$\pi_{x_0}$ is an open mapping,  regarded as a map from $G$ onto the orbit
$\{g\cdot x_0: g\in G\}$ of $x_0$ (with the relative topology of $X$).
\item
The differential $d(\pi_{x_0})_1:(TG)_1\to X$ splits: its kernel and range are
closed complemented subspaces.
\end{enumerate}
Then the orbit $\{g\cdot x_0: g\in G\}$ is a smooth submanifold of  $X$, and the
map
$$
\pi_{x_0}:G\to \{g\cdot x_0: g\in G\}
$$ 
is a smooth submersion.
\end{lem}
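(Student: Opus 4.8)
The plan is to combine the infinitesimal hypothesis (2) with the global hypothesis (1) through the homogeneity of the orbit map, producing an explicit submanifold chart by the inverse function theorem in Banach spaces. Throughout write $\pi=\pi_{x_0}$, $\mathcal{O}=\{g\cdot x_0:g\in G\}$, and let $\exp$ denote the exponential map of $G$, which is a local diffeomorphism of a neighbourhood of $0$ in $(TG)_1$ onto a neighbourhood of $1$. By (2) I fix closed decompositions $(TG)_1=\mathfrak{k}\oplus\mathfrak{m}$ with $\mathfrak{k}=\ker d\pi_1$, and $X=R\oplus W$ with $R=R(d\pi_1)$ closed and complemented; then $d\pi_1|_{\mathfrak{m}}\colon\mathfrak{m}\to R$ is a topological isomorphism.

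First I would record the equivariance $\pi\circ\lambda_g=L_g\circ\pi$, where $\lambda_g$ is left translation on $G$ and $L_g\colon X\to X$, $L_g(x)=g\cdot x$, is a diffeomorphism. Differentiating at $1$ gives $d\pi_g\circ d(\lambda_g)_1=d(L_g)_{x_0}\circ d\pi_1$, from which I draw two consequences. First, for every $g$ the kernel and range of $d\pi_g$ are the images of $\mathfrak{k}$ and $R$ under the isomorphisms $d(\lambda_g)_1$ and $d(L_g)_{x_0}$, so the splitting of (2) persists along the orbit; hence it suffices, via left translation in $G$ and the diffeomorphisms $L_g$ of $X$, to analyse $\pi$ near $1$ and $\mathcal{O}$ near $x_0$. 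Second, applying the identity to $\kappa\in\mathfrak{k}$ along the curve $g=\exp(t\kappa)$ and using $d\pi_1(\kappa)=0$ yields $\frac{d}{dt}\,\pi(\exp(t\kappa))=0$, whence $\exp(t\kappa)\cdot x_0=x_0$ for all $t$. Thus $\exp$ carries a whole neighbourhood of $0$ in $\mathfrak{k}$ into the isotropy of $x_0$; this fact is the crux of the argument.

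Next I would build the chart. Define $\Theta(\eta,w)=\exp(\eta)\cdot x_0+w$ on a neighbourhood of $(0,0)$ in $\mathfrak{m}\times W$. Its differential at $(0,0)$ is $(\delta\eta,\delta w)\mapsto d\pi_1(\delta\eta)+\delta w$, an isomorphism of $\mathfrak{m}\times W$ onto $R\oplus W=X$, so by the inverse function theorem $\Theta$ is a diffeomorphism of a box $U_{\mathfrak{m}}\times\Omega_W$ onto an open set $\Omega\ni x_0$. Consequently the slice $S:=\{\exp(\eta)\cdot x_0:\eta\in U_{\mathfrak{m}}\}=\Theta(U_{\mathfrak{m}}\times\{0\})$ is a submanifold of $X$ modelled on $R$, with $\Theta^{-1}$ as submanifold chart. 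To identify $S$ with $\mathcal{O}$ near $x_0$ I invoke (1) via the isotropy-fixing property: shrinking $U_{\mathfrak{m}}$ so that $(\eta,\kappa)\mapsto\exp(\eta)\exp(\kappa)$ is a diffeomorphism onto an open neighbourhood $V=\exp(U_{\mathfrak{m}})\exp(U_{\mathfrak{k}})$ of $1$, I compute, using $\exp(\kappa)\cdot x_0=x_0$, that $\pi(\exp(\eta)\exp(\kappa))=\exp(\eta)\cdot(\exp(\kappa)\cdot x_0)=\exp(\eta)\cdot x_0$, so that $\pi(V)=S$ exactly. Since $V$ is open and $\pi$ is open onto $\mathcal{O}$, the set $S=\pi(V)$ is relatively open in $\mathcal{O}$; replacing $\Omega$ by its intersection with the open set cutting out $S$, I get $\mathcal{O}\cap\Omega=S$, so $\Theta^{-1}$ is a submanifold chart for $\mathcal{O}$ at $x_0$. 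By the homogeneity noted above the diffeomorphisms $L_g$ transport this chart to every point $g\cdot x_0$, whence $\mathcal{O}$ is a submanifold of $X$. Finally $\eta\mapsto\exp(\eta)$ read through $\Theta^{-1}$ is a smooth local section of $\pi$ through $x_0$, and translating it by $\lambda_g$ gives sections everywhere; together with the surjectivity of $d\pi_1$ onto $T_{x_0}\mathcal{O}=R$ with complemented kernel, this shows $\pi$ is a smooth submersion.

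I expect the only genuinely delicate point to be the passage from the immersed slice $S$ to an embedded, relatively open piece of the orbit. An immersion with split differential always yields a local submanifold $S\subseteq X$, but without a global input $S$ could fail to be relatively open in $\mathcal{O}$, which might return near $x_0$ along a different sheet. Hypothesis (1) supplies precisely this, and the clean way to exploit it is the identity $\pi(V)=S$, which rests on the fact that one-parameter subgroups generated by $\ker d\pi_1$ fix $x_0$. The remaining ingredients—the inverse function theorem chart, the equivariance bookkeeping, and the construction of local sections—are routine.
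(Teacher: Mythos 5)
Your proof is correct. Note that the paper itself contains no proof of this lemma: it is transcribed from the appendix of Raeburn's paper \cite{rae} and described as a consequence of the implicit function theorem in Banach spaces, which is exactly the route you take (the product chart $\Theta$ built from the splittings $(TG)_1=\mathfrak{k}\oplus\mathfrak{m}$ and $X=R\oplus W$, the isotropy computation giving $\pi(V)=S$, and hypothesis (1) to make the slice relatively open in the orbit).
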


We shall apply this lemma to our situation. Fix $Q_0\in Gr_A$, and consider the map
$$
\pi_{Q_0}:\gg_A \to \b^A_s(\h) , \ \pi_{Q_0}(G)=GQ_0G^{-1}.
$$
Clearly, it is a $C^\infty$ map, its differential at the identity $1$ is
$$
d(\pi_{Q_0})_1=\delta_{Q_0}: \b^A_{as}(\h)\to \b_s^A(\h), \ \delta_{Q_0}(X)=XQ_0-Q_0X,
$$
where the Banach-Lie algebra of $\gg_A$ is identified with $\b_{as}^A(\h)$, the space of $A$-anti-symmetric operators.
\begin{prop}\label{delta splits}
The range and the kernel of $\delta_{Q_0}$ are complemented.
The range of $\delta_{Q_0}$ consists of all $A$-symmetric operators that are co-diagonal with respect to $Q_0$, i.e.
$$
R(\delta_{Q_0})=\{Y\in\b^A_s(\h): Q_0YQ_0=(1-Q_0)Y(1-Q_0)=0\}.
$$

The kernel of $\delta_{Q_0}$ consists of all $A$-anti-symmetric operators that are diagonal with respect to $Q_0$, i.e.
$$
\ker(\delta_{Q_0})=\{X\in\b^A_{as}(\h): Q_0X=XQ_0\}.
$$
\end{prop}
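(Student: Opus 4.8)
The plan is to exploit the $2\times 2$ block decomposition induced by $Q_0$ and to argue that $\delta_{Q_0}$ interchanges the two symmetry classes, behaving on co-diagonal operators like an involution. Write $Q=Q_0$ and decompose each $T\in\b(\h)$ into the four blocks $QTQ$, $QT(1-Q)$, $(1-Q)TQ$, $(1-Q)T(1-Q)$ relative to $Q$. A direct computation using $Q^2=Q$ gives $\delta_{Q}(X)=XQ-QX=(1-Q)XQ-QX(1-Q)$, so $\delta_{Q}(X)$ is always co-diagonal with respect to $Q$; that is, $Q\,\delta_{Q}(X)\,Q=(1-Q)\,\delta_{Q}(X)\,(1-Q)=0$. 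Since the two surviving blocks lie in complementary ranges, $\delta_{Q}(X)=0$ forces $(1-Q)XQ=QX(1-Q)=0$, which is exactly $QX=XQ$; this already yields the stated description of $\ker\delta_{Q_0}$.

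For the range, I would first record that $\delta_{Q}$ maps each symmetry class into the other. Using $Q^*A=AQ$ one checks that if $X$ is $A$-anti-symmetric then $\delta_{Q}(X)$ is $A$-symmetric (this is automatic, since $\delta_{Q}=d(\pi_{Q_0})_1$), and by the same manipulation that if $Y$ is $A$-symmetric then $\delta_{Q}(Y)$ is $A$-anti-symmetric. Combined with the block computation, this shows that $\delta_{Q}^2$ acts as the identity on co-diagonal operators. Hence, given any $A$-symmetric $Y$ with $QYQ=(1-Q)Y(1-Q)=0$, the operator $X=\delta_{Q}(Y)=YQ-QY$ is $A$-anti-symmetric and satisfies $\delta_{Q}(X)=Y$. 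Together with the inclusion $R(\delta_{Q_0})\subseteq\{Y\in\b^A_s(\h):QYQ=(1-Q)Y(1-Q)=0\}$ established in the previous paragraph (the range is $A$-symmetric and co-diagonal), this proves the description of $R(\delta_{Q_0})$ and, as a bonus, produces an explicit bounded right inverse.

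For the complementation I would exhibit explicit bounded idempotents. For the kernel, set $\Phi(X)=QXQ+(1-Q)X(1-Q)$; this is a norm-continuous idempotent on $\b(\h)$, and one verifies from $Q^*A=AQ$ that it preserves $A$-anti-symmetry block by block (each diagonal compression of an $A$-anti-symmetric operator is again $A$-anti-symmetric). Its image in $\b^A_{as}(\h)$ is precisely the $Q$-diagonal, hence $Q$-commuting, $A$-anti-symmetric operators, namely $\ker\delta_{Q_0}$, whose supplement is the co-diagonal $A$-anti-symmetric operators. For the range, set $\Psi(Y)=QY(1-Q)+(1-Q)YQ$. Here lies the one genuinely delicate point: although neither off-diagonal block of an $A$-symmetric $Y$ is itself $A$-symmetric, their sum $\Psi(Y)$ is. This is the step I expect to be least automatic, since under $Q^*A=AQ$ and $Y^*A=AY$ the two cross terms $(QY(1-Q))^*A$ and $((1-Q)YQ)^*A$ turn into $A$ times the opposite block, so $A$-symmetry is recovered only after adding them. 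Granting this, $\Psi$ is a bounded idempotent of $\b^A_s(\h)$ with image exactly the co-diagonal $A$-symmetric operators $=R(\delta_{Q_0})$; in particular the range is closed and complemented, with supplement the diagonal $A$-symmetric operators.
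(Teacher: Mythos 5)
Your proposal is correct and follows essentially the same route as the paper: the same explicit preimage $X=YQ_0-Q_0Y$ for a co-diagonal $A$-symmetric $Y$ (your ``$\delta_{Q_0}^2=\mathrm{id}$ on co-diagonals'' is exactly what the paper's computation verifies), and the same diagonal-compression idempotent $E(X)=Q_0XQ_0+(1-Q_0)X(1-Q_0)$ for complementation, with your $\Psi$ being just $\mathrm{id}-E$ restricted to $\b^A_s(\h)$ (so your ``delicate point'' about the cross terms also follows at once from $E$ preserving $A$-symmetry). Nothing is missing.
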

\begin{proof} 
Let us prove first the assertion on  $R(\delta_{Q_0})$. Clearly, if $Y=XQ_0-Q_0X$ for some operator $X\in\b^A_{as}(\h)$, then $Y$ is $A$-symmetric, and $Q_0YQ_0=0=(1-Q_0)Y(1-Q_0)$. Conversely, let $Y$ be  an $A$-symmetric operator which is co-diagonal with respect to $Q_0$. Then 
$$
Y=Q_0YQ_0+Q_0Y(1-Q_0)+(1-Q_0)YQ_0+(1-Q_0)Y(1-Q_0)
$$
$$
=Q_0Y(1-Q_0)+(1-Q_0)YQ_0=Q_0Y+YQ_0.
$$
Let $X=YQ_0-Q_0Y$. Clearly $X$ is $A$-anti-symmetric (being the commutator of two $A$-symmetric operators). Moreover, note that
$$
\delta_{Q_0}(X)=(YQ_0-Q_0Y)Q_0-Q_0(YQ_0-Q_0Y)=YQ_0 + Q_0Y=Y.
$$
The assertion on the kernel of $\delta_{Q_0}$ is trivial. Let us prove that these spaces are complemented. To this end, consider the linear map
$$
E=E_{Q_0}:\b(\h)\to \b(\h), \ E(X)=Q_0XQ_0+(1-Q_0)X(1-Q_0).
$$
Clearly, $E$ is idempotent. Note that $E(\b^A_s(\h))\subset \b^A_s(\h)$ and $E(\b^A_{as}(\h))\subset \b^A_{as}(\h)$. Indeed, if $X\in \b^A_s(\h)$, then
$$
E(X)^*A=Q_0^*X^*Q_0^*+(1-Q_0^*)X^*(1-Q_0^*)A=Q_0^*X^*AQ_0+(1-Q_0^*)X^*A(1-Q_0)
$$
$$
=Q_0^*AXQ_0+(1-Q_0^*)AX(1-Q_0)=AQ_0XQ_0+A(1-Q_0)X(1-Q_0)=AE(X),
$$
and similarly if $X\in \b^A_{as}(\h)$.
Let $E_s$ and  $E_{as}$ be the idempotents obtained by restricting $E$ to (respectively) $\b^A_s(\h)$ and $\b^A_{as}(\h)$. The range of $E_{as}$ consists of $A$-anti-symmetric operators which commute with $Q_0$, i.e. $R(E_{as})=\ker(\delta_{Q_0})$, and thus it is complemented in $\b^A_{as}(\h)$. The kernel of $E_s$ consists of $A$-symmetric operators which are co-diagonal with respect to $Q_0$, thus $\ker(E_s)=R(\delta_{Q_0})$, and therefore this space is complemented in $\b^A_s(\h)$.
\end{proof}
The next result implies in particular that the map $\pi_{Q_0}$ is open, when it is regarded as a map from $\gg_A$ onto the orbit $\{GQ_0G^{-1}: G\in\gg_A\}$. This result is based on general facts of the space of idempotents on $\b(\h)$. The main ideas are contained in \cite{cpr}. We state them in the following remark.
\begin{rem}
The set ${\cal Q}(\h)$ of idempotents is a complemented analytic submanifold of $\b(\h)$. Its tangent space at a given element $Q\in {\cal Q}(\h)$ is the Banach space $\{XQ-QX: X\in\b(\h)\}$. There is a natural linear connection, induced by the action of the general linear group, and the direct decomposition of $\b(\h)$ at every $Q\in {\cal Q}(\h)$, as $Q$-diagonal operators plus $Q$-co-diagonal operators. Geodesics of this connection, starting at the point $Q$, are of the form
$$
\delta(t)=e^{tX}Qe^{-tX},
$$
where $X$ is a $Q$-co-diagonal element in $\b(\h)$. 

The affine correspondence $Q\leftrightarrow \epsilon_Q=2Q-1$ allows to view idempotents alternatively as symmetries, i.e. operators $\epsilon$ such that $\epsilon^2=1$. 
Some calculations are easier with symmetries than with idempotents. For instance, $X$ is co-diagonal with respect to $Q$ if and only if it anti-commutes with $\epsilon_Q$. In particular, the above geodesic, regarded as a curve of symmetries, is
$$
\epsilon_\delta(t)=e^{tX}\epsilon_Qe^{-tX}=e^{2tX}\epsilon_Q=\epsilon_Qe^{-2tX}.
$$
Finally, another feature of the diagonal, co-diagonal decomposition (called grading in \cite{cpr}) is that it enables one to produce local cross sections of the action of the general linear group, by means of the exponential map. Namely,
the differential at $1$ of the $C^\infty$ map
$$
\{X\in\b(\h): QXQ=(1-Q)X(1-Q)=0\} \to {\cal Q}(\h), \ \ X\mapsto e^{X}Qe^{-X}
$$
is the linear isomorphism
$$
\{X\in\b(\h): QXQ=(1-Q)X(1-Q)=0\}\to \{XQ-QX: X\in\b(\h)\}, \ \ X\mapsto XQ-QX.
$$
Indeed, it is a simple computation to prove that this linear map is its own inverse. Thus, by the inverse mapping theorem in Banach spaces, the former map is a local diffeomorphism. There exists an open set ${\cal V}_Q\subset {\cal Q}(\h)$, $Q\in {\cal V}_Q$, such that for any $R\in{\cal V}_Q$ there is a unique $Q$-co-diagonal $X_R$ such that 
$e^{X_R}Qe^{-X_R}=R$. The map $R\to X_R$ is analytic, and therefore an analytic cross section is defined:
$$
{\cal V}_Q \ni R \mapsto e^{X_R} \in Gl(\h).
$$
Let us call it the exponential cross section. Note that the open set ${\cal V}_Q$ can be chosen to be a ball in $\b(\h)$, intersected with $\mathcal{Q}(\h)$: ${\cal V}_Q=\{R\in {\cal Q}(\h): \|R-Q\|<r_Q\}$. Moreover, we can further shrink $r_Q$ so that if $R\in{\cal V}_Q$, $\|e^{2X_R}-1\|<1$. We fix this value of $r_Q$ for each $Q$ (we will estimate this radius below).
\end{rem}
\begin{prop}\label{cont cross}
For each $Q_0\in Gr_A$, the exponential cross section just described, when restricted to ${\cal V}_{Q_0}\cap Gr_A$, takes values in $\gg_A$. Thus  the map $\pi_{Q_0}:\gg_A\to \{GQ_0G^{-1}: G\in\gg_A\}$ has continuous local cross sections, and in particular it is an  open map.
\end{prop}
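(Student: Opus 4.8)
The plan is to prove the substantive claim first, namely that the exponential cross section sends $R\in{\cal V}_{Q_0}\cap Gr_A$ to an element of $\gg_A$, and then deduce openness from the general nonsense about continuous local sections. Since the Banach-Lie algebra of $\gg_A$ is $\b_{as}^A(\h)$ (an $A$-anti-symmetric $X$ satisfies $e^{X^*}Ae^{X}=A$, so $e^{X}\in\gg_A$), it suffices to show that the $Q_0$-co-diagonal operator $X_R$ associated with $R$ is $A$-anti-symmetric.

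The key idea is to pass to symmetries. Put $\epsilon_0=2Q_0-1$ and $\epsilon_R=2R-1$. Since $Q_0,R$ are $A$-symmetric idempotents, $\epsilon_0,\epsilon_R$ are $A$-symmetric symmetries, and any $A$-symmetric symmetry $\epsilon$ lies in $\gg_A$ because $\epsilon^*A\epsilon=A\epsilon^2=A$. By the identity for geodesics written in symmetries recalled in the Remark, $\epsilon_R=e^{2X_R}\epsilon_0$, hence $W:=e^{2X_R}=\epsilon_R\epsilon_0$. As a product of two elements of $\gg_A$, $W\in\gg_A$, so $W^*A=AW^{-1}$. By the choice of $r_{Q_0}$ we have $\|W-1\|<1$, and I would shrink $r_{Q_0}$ further so that $\|W^{-1}-1\|<1$ as well; then $V:=\log W$ is defined by its power series and is small.

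Next I would check that $V$ is $A$-anti-symmetric. Because the Hilbert adjoint is norm continuous, $V^*=\log(W^*)$. Induction on $W^*A=AW^{-1}$ gives $(W^*-1)^nA=A(W^{-1}-1)^n$, whence $V^*A=\log(W^*)\,A=A\,\log(W^{-1})=-AV$, where $\log(W^{-1})=-\log W$ holds by holomorphic functional calculus since the spectra of $W$ and $W^{-1}$ lie near $1$. Thus $V\in\b_{as}^A(\h)$. Moreover $V$ is $Q_0$-co-diagonal: from $\epsilon_0 W\epsilon_0=\epsilon_0\epsilon_R=W^{-1}$ one gets $\epsilon_0 V\epsilon_0=\log(W^{-1})=-V$, so $V$ anti-commutes with $\epsilon_0$. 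Using the Remark's identity for co-diagonal operators, conjugation by $e^{V/2}$ sends $\epsilon_0$ to $e^{V}\epsilon_0=W\epsilon_0=\epsilon_R$, i.e. $e^{V/2}Q_0e^{-V/2}=R$. By the uniqueness of the $Q_0$-co-diagonal logarithm in the Remark, $X_R=\frac12 V$, so $X_R$ is $A$-anti-symmetric and $e^{X_R}\in\gg_A$.

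Finally, the assignment $R\mapsto X_R$ is analytic by the Remark, so the section $\sigma\colon R\mapsto e^{X_R}$ is continuous on ${\cal V}_{Q_0}\cap Gr_A$, satisfies $\pi_{Q_0}(\sigma(R))=R$ and $\sigma(Q_0)=1$. The existence of such a continuous local section through $Q_0$, together with the equivariance of $\pi_{Q_0}$ under the conjugation action of $\gg_A$, which acts by homeomorphisms of the orbit, forces $\pi_{Q_0}$ to be open onto its orbit. I expect the only delicate point to be the bookkeeping around the logarithm: keeping $W$ and $W^{-1}$ inside the convergence region, justifying $\log(W^{-1})=-\log W$, and matching $\frac12\log W$ with the cross section's $X_R$ through the uniqueness clause rather than invoking a global injectivity of $\exp$.
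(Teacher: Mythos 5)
Your proof is correct and takes essentially the same route as the paper: pass to symmetries, identify $e^{2X_R}=\epsilon_R\epsilon_{Q_0}$ as an element of $\gg_A$ at distance less than $1$ from the identity, and conclude that its (principal) logarithm lies in $\b_{as}^A(\h)$, so that $X_R\in\b_{as}^A(\h)$ and $e^{X_R}\in\gg_A$. The only difference is that where the paper cites a lemma from \cite{achl} (the logarithm series of $G\in\gg_A$ with $\|G-1\|<1$ converges to an $A$-anti-symmetric operator), you prove this by hand via $(W^*-1)^nA=A(W^{-1}-1)^n$, and you also make explicit the identification $X_R=\tfrac12\log W$ through co-diagonality and the local uniqueness clause, a point the paper leaves implicit.
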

\begin{proof}
Let $Q\in {\cal V}_{Q_0}\cap Gr_A$. Then $Q=e^{X_Q}Q_0e^{-X_Q}$, and thus we get
$$
\epsilon_Q=e^{2X_Q}\epsilon_{Q_0}=\epsilon_{Q_0}e^{-2X_Q}.
$$
Then $e^{2X_Q}=\epsilon_{Q}\epsilon_{Q_0}$ and $\epsilon_{Q_0}\epsilon_{Q}=e^{-2X_Q}$, so we have that 
$$
(e^{2X_Q})^*A=(2Q_0^*-1)(2Q^*-1)A=A(2Q_0-1)(2Q-1)=Ae^{-2X_Q}.
$$
Hence $e^{2X_Q}\in \gg_A$. Note that by the assumption on $r_{Q_0}$, it follows that $\|e^{2X_Q}-1\|<1$. In \cite{achl}, it was shown that if $G\in\gg_A$ and $\|G-1\|< 1$, then the usual series of the logarithm of $G$ (which is absolutely convergent) converges to an element which belongs to $\b^A_{as}(\h)$. It follows that $X_Q\in \b^A_{as}(\h)$. Hence $e^{X_Q}\in\gg_A$.
\end{proof}
Note that what in fact was proved above, is that the smooth map 
$$
{\cal V}_{Q_0}\cap Gr_A \ni Q\mapsto X_Q
$$ 
takes values in $\b_{as}^A(\h)$,  the Banach-Lie algebra of $\gg_A$.
We may use Lemma \ref{raeburn}  to prove the following: 

\begin{coro}\label{est difer}
The orbit $\gg_A\cdot Q_0=\{GQ_0G^{-1}: G\in\gg_A\}$  is a complemented $C^\infty$ submanifold of $\b_s^A(\h)$. The map $\pi_{Q_0}:\gg_A\to \gg_A\cdot Q_0$ is a $C^\infty$ submersion. Moreover, the orbit $\gg_A\cdot Q_0$ is a union of connected components of $Gr_A$, and therefore, $Gr_A$ is a complemented submanifold of $\b_s^A(\h)$.
\end{coro}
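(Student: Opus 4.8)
The plan is to deduce the corollary from Lemma \ref{raeburn}, applied to the smooth conjugation action $G\cdot Q = GQG^{-1}$ of the Banach-Lie group $\gg_A$ on the Banach space $X=\b_s^A(\h)$. This action is well defined into $\b_s^A(\h)$: a short computation using $G^*AG=A$ shows that conjugation by an element of $\gg_A$ preserves $A$-symmetry, and it is smooth since it is the restriction of the (analytic) conjugation action of $Gl(\h)$ on $\b(\h)$. To invoke the lemma for the map $\pi_{Q_0}$, I must verify its two hypotheses. The first, that $\pi_{Q_0}$ is open onto its orbit, is exactly the content of Proposition \ref{cont cross}, which exhibits continuous local cross sections. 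The second, that the differential $d(\pi_{Q_0})_1=\delta_{Q_0}$ splits, is precisely Proposition \ref{delta splits}, which shows that both $R(\delta_{Q_0})$ and $\ker(\delta_{Q_0})$ are closed and complemented. With both hypotheses verified, Lemma \ref{raeburn} immediately yields that the orbit $\gg_A\cdot Q_0$ is a smooth submanifold of $\b_s^A(\h)$ and that $\pi_{Q_0}$ is a smooth submersion.

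For the complementedness of this submanifold, I would note that, since $\pi_{Q_0}$ is a submersion, the tangent space to the orbit at $Q_0$ coincides with $R(d(\pi_{Q_0})_1)=R(\delta_{Q_0})$, which by Proposition \ref{delta splits} is complemented in $\b_s^A(\h)$. Because $\gg_A$ acts transitively on the orbit by linear isomorphisms of $\b_s^A(\h)$, namely the conjugations $Y\mapsto GYG^{-1}$, the tangent space at any other point is the image of $R(\delta_{Q_0})$ under such an isomorphism and hence remains complemented. Thus the orbit is a complemented submanifold of $\b_s^A(\h)$.

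The crux of the remaining assertion is to show that each orbit is open in $Gr_A$. Here I would return to Proposition \ref{cont cross}: for $Q_0\in Gr_A$ and any $Q\in {\cal V}_{Q_0}\cap Gr_A$, that result produces $X_Q\in\b_{as}^A(\h)$ with $e^{X_Q}\in\gg_A$ and $Q=e^{X_Q}Q_0e^{-X_Q}$, so $Q\in\gg_A\cdot Q_0$. Hence the relatively open set ${\cal V}_{Q_0}\cap Gr_A$ is contained in the orbit, proving the orbit is open in $Gr_A$. Since the orbits partition $Gr_A$, the complement of any one orbit is a union of the other open orbits and is therefore also open; thus every orbit is simultaneously open and closed in $Gr_A$, and consequently is a union of connected components of $Gr_A$. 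Finally, since $Gr_A$ is the disjoint union of these clopen orbits, each a complemented submanifold, and since being a complemented submanifold is a local condition with $Gr_A$ agreeing near each point with the orbit through that point, $Gr_A$ inherits the complemented submanifold structure.

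I expect the work here to be assembly rather than a deep difficulty, since Propositions \ref{delta splits} and \ref{cont cross} have already carried the analytic load. The one point deserving care is the passage from \emph{each orbit is open} to \emph{each orbit is clopen, hence a union of connected components}: this relies on the orbit decomposition being a partition into relatively open sets, and the openness of a single orbit in the relative topology is precisely the geometric payoff of the exponential cross section of Proposition \ref{cont cross}.
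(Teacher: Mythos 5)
Your proposal is correct and follows essentially the same route as the paper: the authors likewise invoke Lemma \ref{raeburn}, with its two hypotheses supplied by Proposition \ref{cont cross} (continuous local cross sections, hence openness) and Proposition \ref{delta splits} (splitting of $\delta_{Q_0}$), and then deduce that the orbit is a union of connected components from local transitivity of the action. Your write-up merely makes explicit the steps the paper leaves implicit, namely that local transitivity renders every orbit open, hence clopen in the partition of $Gr_A$ into orbits.
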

\begin{proof}
The assertion that the orbit $\gg_A\cdot Q_0$ is a union of connected components of $Gr_A$ needs proof. By Proposition \ref{cont cross}, which provides the existence of continuous local cross sections for the action of $\gg_A$, it follows that  this action is locally transitive, a fact which implies the assertion. Also it implies that an element of $Q\in Gr_A$, which verifies $\|Q-Q_0\|<r_{Q_0}$, can be connected to $Q_0$ by the curve $e^{tX_{Q_0}}Q_0e^{-tX_{Q_0}}$ in $Gr_A$.
\end{proof}
\begin{rem}
In the case where $\rank(Q_0)<\infty$, it can be shown that $\gg_A\cdot Q_0$ is also a  complemented submanifold of $\b(\h)$ (see \cite{chm}). 
\end{rem}

Note that the curve in the above proof is a geodesic of ${\cal Q}(\h)$. Therefore we have also the following consequence:
\begin{coro}\label{geodesicas}
Let $Q,Q_0\in Gr_A$ such that $\|Q-Q_0\|<r_{Q_0}$. Then there is a unique geodesic $\delta$ (up to reparametrization) of ${\cal Q}(\h)$ which joins them, and $\delta(t)\in Gr_A$ for all $t\in\mathbb{R}$.
\end{coro}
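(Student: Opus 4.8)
The plan is to read the geodesic off the exponential cross section and then use Proposition~\ref{cont cross} to keep it inside $Gr_A$ for all times. First I would note that the hypothesis $\|Q-Q_0\|<r_{Q_0}$ puts $Q$ in ${\cal V}_{Q_0}\cap Gr_A$, so the exponential cross section (from the Remark preceding Proposition~\ref{cont cross}) supplies a unique $Q_0$-co-diagonal operator $X=X_Q$ with $e^{X}Q_0e^{-X}=Q$. Setting
$$
\delta(t)=e^{tX}Q_0e^{-tX},
$$
and using that $X$ is $Q_0$-co-diagonal, $\delta$ is precisely the geodesic of ${\cal Q}(\h)$ issuing from $Q_0$ with initial velocity $X$; it satisfies $\delta(0)=Q_0$ and $\delta(1)=Q$, hence joins the two points.

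For uniqueness I would recall that every geodesic of ${\cal Q}(\h)$ through $Q_0$ is of the form $e^{tY}Q_0e^{-tY}$ with $Y$ co-diagonal, the generator $Y$ determining the curve. Any geodesic joining $Q_0$ and $Q$ can be reparametrized so that its endpoints occur at $t=0$ and $t=1$; its generator is then a $Q_0$-co-diagonal operator $Y$ with $e^{Y}Q_0e^{-Y}=Q$. Passing to symmetries as in the Remark, $e^{2Y}=\epsilon_Q\epsilon_{Q_0}=e^{2X}$, and the choice of $r_{Q_0}$ (which guarantees $\|e^{2X}-1\|<1$) places $e^{2X}$ in the region where the logarithm is uniquely determined, forcing $Y=X$. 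Thus the geodesic is unique up to reparametrization.

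It remains to show $\delta(t)\in Gr_A$ for every $t\in\mathbb{R}$, not merely for $t\in[0,1]$. The decisive input is Proposition~\ref{cont cross}, which yields that the cross-section generator is $A$-anti-symmetric, $X\in\b_{as}^A(\h)$. Since $\b_{as}^A(\h)$ is the Banach-Lie algebra of $\gg_A$ (see \cite{achl}), the entire one-parameter subgroup $t\mapsto e^{tX}$ lies in $\gg_A$; and because $\gg_A$ acts on $Gr_A$ by conjugation, $Q_{G(\s)}=GQ_\s G^{-1}$, each $\delta(t)=e^{tX}Q_0e^{-tX}$ is again the idempotent attached to a compatible subspace, so $\delta(t)\in Gr_A$ for all $t$.

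The substantive work has in fact been done earlier: the geodesic structure and the injectivity of the exponential cross section come from the Remark, and the crucial fact that $X$ is $A$-anti-symmetric is exactly Proposition~\ref{cont cross}; the corollary then follows by assembling these. The only point demanding genuine care is the passage from the segment $t\in[0,1]$ to all of $\mathbb{R}$, which is where I expect the main (though modest) obstacle to lie: it does not follow from any local estimate involving $r_{Q_0}$, but rests on the global statement that the exponential of a Lie algebra element remains in the group. Once this one-parameter subgroup property is invoked, permanence for every $t$ is immediate.
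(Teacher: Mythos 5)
Your proposal follows essentially the same route as the paper: both rest on Proposition~\ref{cont cross} to produce the $Q_0$-co-diagonal, $A$-anti-symmetric generator $X_Q$, form the geodesic $\delta(t)=e^{tX_Q}Q_0e^{-tX_Q}$, and obtain permanence in $Gr_A$ for all $t\in\mathbb{R}$ from the fact that $e^{tX_Q}\in\gg_A$ (since $X_Q$ lies in the Banach--Lie algebra $\b_{as}^A(\h)$) together with invariance of $Gr_A$ under conjugation by $\gg_A$. Your uniqueness discussion via $e^{2Y}=\epsilon_Q\epsilon_{Q_0}=e^{2X}$ is the same logarithm argument the paper leaves implicit in the Remark preceding Proposition~\ref{cont cross}, so the proof is correct at the same level of rigor as the paper's.
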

We may compute an estimate for $r_{Q_0}$. As remarked in the Section \ref{on preliminary facts}, if $B,C\in\b_s^A(\h)$, then they have symmetric extensions $\bar{B},\bar{C}$ in $\ele$, and $\|\bar{B}-\bar{C}\|_{\b(\ele)}\le \|B-C\|$. It follows that if $Q,Q_0\in Gr_A$ satisfy  $\|Q-Q_0\|<1$, then $\bar{Q}, \bar{Q_0}$ 
are self-adjoint projections in $\ele$, lying at distance less than $1$. In \cite{pr} it was shown that two 
self-adjoint projections at distance less than $1$ are joined by a unique geodesic of the manifold of self-adjoint projections. Thus there exists a unique  anti-symmetric operator $Z_{\bar{Q}}$ acting on $\ele$, which is co-diagonal with respect to $\bar{Q}_0$, such that 
$$
e^{2Z_{\bar{Q}}}\epsilon_{\bar{Q}_0}=\epsilon_{\bar{Q}_0}e^{-2Z_{\bar{Q}}}=\epsilon_{\bar{Q}}.
$$
Note that $e^{2Z_{\bar{Q}}}=\epsilon_{\bar{Q}}\epsilon_{\bar{Q}_0}$. By construction, $\epsilon_{\bar{Q}}$ and $\epsilon_{\bar{Q}_0}$ leave $\h\subset \ele$ fixed. Therefore $e^{2Z_{\bar{Q}}}(\h)\subset \h$. Similarly,
its inverse $e^{-2Z_{\bar{Q}}}=\epsilon_{\bar{Q}_0}\epsilon_{\bar{Q}}$ leaves $\h$ invariant. It follows that
$e^{2Z_{\bar{Q}}}$ is a unitary operator in $\ele$ such that $e^{2Z_{\bar{Q}}}(\h)=\h$. Therefore 
$$
e^{2Z_{\bar{Q}}}|_{\h}\in \gg_A.
$$
Let us further shrink the distance between $Q_0$ and $Q$ so that the exponent $Z_{\bar{Q}}$ induces an operator in the Banach-Lie algebra of $\gg_A$. To this effect, it will suffice that 
$$
\|e^{2Z_{\bar{Q}}}|_{\h}-1\|<1 \hbox{ and } \|e^{-2Z_{\bar{Q}}}|_{\h}-1\|<1.
$$
Indeed, on one hand, since $e^{2Z_{\bar{Q}}}|_{\h}-1$  is extendable to $\ele$, by Theorem \ref{proper op},
$$
\sigma_\ele(e^{2Z_{\bar{Q}}}-1)\subset \sigma_\h(e^{2Z_{\bar{Q}}}|_{\h}-1)\cup \overline{\sigma_\h(\left(e^{2Z_{\bar{Q}}}|_{\h}\right)^+-1)}.
$$
Note that 
$$
\left(e^{2Z_{\bar{Q}}}|_{\h}\right)^+=e^{-2Z_{\bar{Q}}}|_{\h}=\epsilon_{\bar{Q}_0}\epsilon_{\bar{Q}}|_\h=\epsilon_{Q_0}\epsilon_{Q}.
$$
If $\rho_\h$ and $\rho_\ele$ denote the spectral radii, then
$$
\|e^{2Z_{\bar{Q}}}-1\|_{\b(\ele)}=\rho_\ele(e^{2Z_{\bar{Q}}-1})\le \max\{ \rho_\h(e^{2Z_{\bar{Q}}}|_{\h}-1), \rho_\h(e^{-2Z_{\bar{Q}}}|_{\h}-1)\} 
$$
$$
\le \max\{ \|e^{2Z_{\bar{Q}}}|_{\h}-1\|, \|e^{-2Z_{\bar{Q}}}|_{\h}-1\| \}<1.
$$
This implies in particular that $2Z_{\bar{Q}}$ is the usual determination of the logarithm (note that $\|Z_{\bar{Q}}\|<\pi/6$).
On the other hand, by  \cite[Lemma 3.3]{achl},  $\|e^{2Z_{\bar{Q}}}|_{\h}-1\|<1$ implies that the usual logarithm series for $e^{2Z_{\bar{Q}}}|_{\h}$ converges in $\b(\h)$ to an element in the the Banach-Lie algebra of $\gg_A$. Therefore it  follows that $X_{Q}:=Z_{\bar{Q}}|_{\h}\in \b_{as}^A(\h)$, as wanted. Note that
$e^{2Z_{\bar{Q}}}|_{\h}=\epsilon_{Q}\epsilon_{Q_0}$, and then
\begin{align*}
\|e^{2Z_{\bar{Q}}}|_{\h}-1\|& =\|\epsilon_{Q}\epsilon_{Q_0} -1\|=\|(\epsilon_{Q}-\epsilon_{Q_0})\epsilon_{Q_0}\|\le \|\epsilon_{Q}-\epsilon_{Q_0}\|\|\epsilon_{Q_0}\|
 =2\|\epsilon_{Q_0}\|\|Q-Q_0\|.
\end{align*}
Similarly, 
$$
\|e^{-2Z_{\bar{Q}}}|_{\h}-1\|=\|\epsilon_{Q_0}\epsilon_{Q}-1\|=\|\epsilon_{Q_0}(\epsilon_Q-\epsilon_{Q_0})\|\le 2\|\epsilon_{Q_0}\|\|Q-Q_0\|.
$$
We may summarize the above discussion in the following 
\begin{prop}\label{estimate r}
With the current notations,  $r_{Q_0}=\frac{1}{2\|\epsilon_{Q_0}\|}=\frac{1}{2} \frac{1}{\|Q_0\|+(\|Q_0\|^2-1)^{1/2}}$.
\end{prop}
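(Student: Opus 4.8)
The plan is to separate the statement into two independent assertions: the value $r_{Q_0}=\frac{1}{2\|\epsilon_{Q_0}\|}$, which is essentially already contained in the discussion preceding the statement, and the closed formula $\|\epsilon_{Q_0}\|=\|Q_0\|+(\|Q_0\|^2-1)^{1/2}$, which is a purely operator-theoretic fact about a single idempotent.

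For the first assertion, recall that the construction of $X_Q=Z_{\bar Q}|_{\h}\in\b_{as}^A(\h)$ only required the two conditions $\|e^{2Z_{\bar Q}}|_{\h}-1\|<1$ and $\|e^{-2Z_{\bar Q}}|_{\h}-1\|<1$. The two estimates displayed just above the statement bound each of these quantities by $2\|\epsilon_{Q_0}\|\,\|Q-Q_0\|$, so both hold as soon as $2\|\epsilon_{Q_0}\|\,\|Q-Q_0\|<1$, i.e. whenever $\|Q-Q_0\|<\frac{1}{2\|\epsilon_{Q_0}\|}$. This is exactly the claimed value of the radius, and no further work is needed here.

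The substance is the norm identity, which I would prove for an arbitrary non-trivial idempotent (the cases $Q_0=0,1$ being immediate). From $Q_0^2=Q_0$ one gets $\|Q_0\|\ge 1$. Setting $M=R(Q_0)$ and using $\h=M\oplus M^\perp$, the facts that $Q_0$ restricts to the identity on $M$ and has range $M$ force the block form
$$
Q_0=\begin{pmatrix} 1 & b \\ 0 & 0 \end{pmatrix},\qquad b=Q_0|_{M^\perp}\colon M^\perp\to M .
$$
Since $Q_0Q_0^*=(1+bb^*)\oplus 0$, we obtain $\|Q_0\|^2=1+\|b\|^2$; and $\epsilon_{Q_0}=2Q_0-1$ has diagonal entries $1,-1$ and upper corner $2b$, whence
$$
\epsilon_{Q_0}^*\epsilon_{Q_0}=\begin{pmatrix} 1 & 2b \\ 2b^* & 4b^*b+1 \end{pmatrix}.
$$

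Finally I would diagonalize $\epsilon_{Q_0}^*\epsilon_{Q_0}$ by reducing to scalar $2\times 2$ blocks through the singular value structure of $b$: for each singular value $s$ of $b$ the corresponding block is $\left(\begin{smallmatrix}1 & 2s\\ 2s & 4s^2+1\end{smallmatrix}\right)$, whose largest eigenvalue equals $(s+\sqrt{1+s^2})^2$. As this is increasing in $s$, taking the supremum over the singular values of $b$ gives $\|\epsilon_{Q_0}\|^2=(\|b\|+\sqrt{1+\|b\|^2})^2$, i.e. $\|\epsilon_{Q_0}\|=\|Q_0\|+(\|Q_0\|^2-1)^{1/2}$, using $\|b\|=(\|Q_0\|^2-1)^{1/2}$ and $\sqrt{1+\|b\|^2}=\|Q_0\|$. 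The one point demanding care is this reduction in infinite dimensions: when $b$ is not compact there is no honest singular value decomposition, and one must instead apply the spectral theorem to the positive operator $b^*b$ to realize $\epsilon_{Q_0}^*\epsilon_{Q_0}$ as a direct integral of the scalar blocks above, and then identify $\max\sigma(\epsilon_{Q_0}^*\epsilon_{Q_0})$ with the supremum of the block top-eigenvalues over $\sigma(|b|)$. This is the main (and essentially the only) obstacle; alternatively, the identity $\|2Q_0-1\|=\|Q_0\|+(\|Q_0\|^2-1)^{1/2}$ is classical and could simply be cited.
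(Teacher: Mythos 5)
Your proposal is correct, and its first half coincides with the paper: the identification $r_{Q_0}=\frac{1}{2\|\epsilon_{Q_0}\|}$ is exactly what the two estimates preceding the statement deliver, and the paper's own proof does not revisit this point either. Where you genuinely diverge is the norm identity: the paper's entire proof consists of citing Gerisch \cite{gerisch} for $\|\epsilon_{Q_0}\|=\|Q_0\|+(\|Q_0\|^2-1)^{1/2}$, whereas you derive it. Your derivation is sound: the block form of an idempotent relative to $\h=R(Q_0)\oplus R(Q_0)^\perp$, the identity $\|Q_0\|^2=1+\|b\|^2$, the matrix $\epsilon_{Q_0}^*\epsilon_{Q_0}$, and the top eigenvalue $(s+\sqrt{1+s^2})^2$ of the scalar block are all correct, and the infinite-dimensional reduction you flag is closed by the standard device: polar-decompose $b=u|b|$, use $u$ to identify $\overline{R(b)}$ with $\overline{R(|b|)}$ so that $\epsilon_{Q_0}^*\epsilon_{Q_0}$ becomes a $2\times 2$ matrix whose entries are functions of the single positive operator $|b|$ (the kernels of $b$ and $b^*$ contribute identity blocks, i.e.\ the case $s=0$), and then invoke the fact that the norm of such a matrix is $\sup_{s\in\sigma(|b|)}$ of the norms of the corresponding scalar matrices; since $(s+\sqrt{1+s^2})^2$ is increasing and $\|b\|\in\sigma(|b|)$, the supremum is attained at $s=\|b\|$. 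As for what each route buys: the paper's proof is one line but rests entirely on a reference; yours is self-contained, shows where the formula $\|Q_0\|+(\|Q_0\|^2-1)^{1/2}$ actually comes from, and works verbatim for non-compact $b$. Your closing remark that one could ``simply cite'' the classical identity is precisely what the paper does.
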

\begin{proof} The norm of $\epsilon_{Q_0}$ was computed in \cite{gerisch}:
$$
\|\epsilon_{Q_0}\|=\|Q_0\|+(\|Q_0\|^2-1)^{1/2}.
$$\end{proof}

\section{Finite rank orbits as symplectic leaves}\label{Banach Lie P}

The foundations of Banach Poisson differential geometry were investigated by A. Odzijewicz and  T. Ratiu in \cite{or}. One of the key features of this new theory is that  Banach Poisson manifolds provide an appropriate  setting for an unified approach 
to the Hamiltonian and the quantum mechanical description of  physical systems. In addition to the seminal article \cite{or}, we also refer the reader to the  monograph \cite{od} for a detailed exposition on Banach Poisson manifolds; meanwhile several interesting examples and applications can be found in \cite{br,brt,go}. 

An important class of infinite dimensional linear Poisson manifolds is given by Banach Lie-Poisson spaces: a Banach space $\mathfrak{b}$ is called a \textit{Banach Lie-Poisson space} if  $\mathfrak{b}^*$ is a Banach Lie algebra endowed with a Lie bracket $[ \, \cdot \, , \, \cdot \, ]$ such that $\text{ad}_x ^*(\mathfrak{b}) \subseteq \mathfrak{b}$ for all $x \in \mathfrak{b}^*$. Recall that $\text{ad}_x ^*: \mathfrak{b}^{**} \to \mathfrak{b}^{**}$ is the coadjoint representation, i.e. the dual map to the adjoint representation defined by $\text{ad}_x:\mathfrak{b}^* \to \mathfrak{b}^*$, $\text{ad}_x (y)=[x,y]$. Actually, this is an equivalent characterization of Banach Lie-Poisson spaces (see \cite[Theorem 4.2]{or}), whereas the original definition involves the notion of Banach Poisson manifolds that we have omitted here. 
Notable examples of Banach Lie-Poisson spaces are the dual of any reflexive Banach Lie algebra, the space of trace class operators on a Hilbert space, and more generally, the predual of a von Neumann algebra.

Let $\b_p(\h)$ denote the class of $p$-Schatten  operators on $\h$ ($1\leq p \leq \infty$), and let $Tr$ be the usual trace on the Hilbert space $\h$. 
 Recall that  the $p$-norm of an operator in $X \in \b_p(\h)$ is given by $\|X\|_p=Tr(|X|^p)^{1/p}$. 
  If $p=\infty$, $\b_{\infty}(\h)$ denotes the compact operators on $\h$. Denote by $\gg_{p,A}$ the  group consisting of operators in $\gg_A$ which are $\b_p(\h)$-perturbations of the identity:
$$
\gg_{p,A}=\{G\in\gg_A: G-1\in\b_p(\h)\}.
$$
The group $\gg_{p,A}$ is a connected Banach-Lie group with the topology defined by the metric $(G_1 , G_2)\mapsto \| G_1 - G_2 \|_p$. Indeed, in \cite{achl}  it was proved that $\gg_{p,A}$ is an exponential group. This means that $\gg_{p,A}=exp(\mathfrak{g}_p)$, where $\mathfrak{g}_p$ is its  Lie algebra, i.e. 
\[  \mathfrak{g}_p =  \b_{as}^A(\h) \cap \b_p(\h).    \]

We will prove that the Lie algebra $\mathfrak{g}_p$ of the Banach Lie group $\gg_{A,p}$ is a Banach Lie-Poisson space. To this end, we will need
the following proposition,  which  shows that the usual duality relationships are preserved in the class of $A$-anti-symmetric operators.
   
\begin{prop}\label{duality}
The map
\begin{equation}\label{eq duality}
\gp \to (\gq )^* , \, \, \, \, Z \mapsto Tr(Z \, \cdot \,)
\end{equation}  
is an isometric isomorphism of real Banach spaces if $1 < p < \infty$ and $p^{-1} + q^{-1}=1$. Moreover, the same map implements the real isometric isomorphisms  $(\mathfrak{g}_{\infty})^* \simeq \mathfrak{g}_1$ and $(\mathfrak{g}_1)^* \simeq \b^A_{as}(\h)$.
\end{prop}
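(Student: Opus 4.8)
The plan is to carry the statement over to the Hilbert space $\ele$, where $A$-anti-symmetric operators become genuinely skew-adjoint, and then to recognize the assertion as the classical self-duality of the Schatten ideals. Since $iZ$ is $A$-symmetric whenever $Z$ is $A$-anti-symmetric, Theorems \ref{symmetrizable results} and \ref{proper op} show that each $Z\in\b_{as}^A(\h)$ extends to a bounded skew-adjoint operator $\bar Z$ on $\ele$, and that when $Z$ is compact its non-zero eigenvalues, together with the corresponding eigenvectors, are the same over $\h$ and over $\ele$. I would work throughout with this $\ele$-realization, under which $\gp$ is identified with the real Banach space $\b_p^{sk}(\ele)$ of skew-adjoint $p$-Schatten operators on $\ele$ and the pairing $Tr(Z\,\cdot\,)$ becomes $(\bar Z,\bar W)\mapsto Tr(\bar Z\bar W)$. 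Thus it suffices to prove that for a complex Hilbert space $\mathcal{K}$ and $1<p<\infty$ the map $\b_p^{sk}(\mathcal{K})\to(\b_q^{sk}(\mathcal{K}))^*$, $X\mapsto Tr(X\,\cdot\,)$, is an isometric isomorphism of real Banach spaces, with endpoint versions $(\b_\infty^{sk})^*\simeq\b_1^{sk}$ and $(\b_1^{sk})^*\simeq\b^{sk}$.

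First I would verify that the pairing is real and bounded. If $X,Y$ are skew-adjoint with $X\in\b_p$, $Y\in\b_q$ and $p^{-1}+q^{-1}=1$, then $XY\in\b_1$ by H\"older and $\overline{Tr(XY)}=Tr(Y^*X^*)=Tr(YX)=Tr(XY)$, so $Tr(XY)\in\RR$; hence $X\mapsto Tr(X\,\cdot\,)$ does define a real-linear functional on $\b_q^{sk}$, and H\"older's inequality gives both its continuity and the bound $\|Tr(X\,\cdot\,)\|\le\|X\|_p$. For the reverse inequality I would diagonalize the compact skew-adjoint operator $X=i\sum_j\mu_j\langle\,\cdot\,,\phi_j\rangle\phi_j$ (with $\mu_j\in\RR$ and $\{\phi_j\}$ orthonormal) and test against $Y=-i\,\|X\|_p^{1-p}\sum_j\mathrm{sgn}(\mu_j)|\mu_j|^{p-1}\langle\,\cdot\,,\phi_j\rangle\phi_j$, which is again skew-adjoint, has $\|Y\|_q=1$ and yields $Tr(XY)=\|X\|_p$; the case $p=\infty$ (resp. $p=1$) uses a single dominant eigenvector (resp. $Y$ the skew-adjoint phase of $X$).

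For surjectivity I would invoke the classical complex duality $\b_q^*\simeq\b_p$ via the trace. A real-linear functional on $\b_q^{sk}$ extends by Hahn--Banach to $\b_q$, hence has the form $W\mapsto\mathrm{Re}\,Tr(WS)$ for some $S\in\b_p$; splitting $S=\tfrac12(S-S^*)+\tfrac12(S+S^*)$ into its skew- and self-adjoint parts (a decomposition whose skew-projection is contractive, since $T\mapsto T^*$ is an isometry of $\b_p$), and using that $\mathrm{Re}\,Tr(WS)=0$ whenever $W$ is skew-adjoint and $S$ is self-adjoint, one finds that the functional is represented by the skew-adjoint part of $S$; together with the isometry this gives the isomorphism. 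The main obstacle is the transit back to $\h$: the norming operator $Y$ and the representing operator must be extensions of genuine $A$-anti-symmetric operators on $\h$. For the isometry this is clean, because $Y$ is assembled from the eigenvectors $\phi_j$ of $\bar Z$, which by Theorems \ref{symmetrizable results}(iv) and \ref{proper op}(iii) lie in $\h$; truncating to finitely many $\phi_j$ produces finite-rank elements of $\gq$ that norm $Z$ to within $\varepsilon$. For surjectivity one combines this localization of the spectral data in $\h$ with the density of $\h$ in $\ele$ to identify $\gp$ isometrically with $\b_p^{sk}(\ele)$, so that the representing element is realized inside $\gp$; the endpoint $(\mathfrak{g}_1)^*\simeq\b_{as}^A(\h)$ is the corresponding transport of $(\b_1^{sk}(\ele))^*\simeq\b^{sk}(\ele)$, with $\b_{as}^A(\h)$ carrying the operator norm of $\ele$.
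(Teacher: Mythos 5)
Your reduction to $\ele$ breaks down at its very first step: the extension map $Z\mapsto\bar Z$ does \emph{not} identify $\gp$, with the norm that actually appears in the proposition (the Schatten $p$-norm of $\b_p(\h)$, computed from the $\h$-singular values and the trace of $\h$), isometrically with the real space of skew-adjoint operators in $\b_p(\ele)$; nor is it onto that space. Since $\bar Z$ is skew-adjoint, hence normal, on $\ele$, its $\ele$-singular values are the moduli of its eigenvalues, and by Theorems \ref{symmetrizable results} and \ref{proper op} these eigenvalues coincide with those of $Z$ on $\h$; but $Z$ is in general \emph{not} normal as an operator on $(\h,<\,,\,>)$, so its $\h$-singular values are related to its eigenvalues only through the one-sided Weyl/Lalesco estimate $\|\bar Z\|_{\b_p(\ele)}\le\|Z\|_p$, which is typically strict. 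Concretely, take $\h=\CC^2$, $A=\mathrm{diag}(1,a)$ with $0<a<1$, and
\[
Z=\begin{pmatrix} 0 & -a\\ 1 & 0\end{pmatrix},\qquad Z^*A=-AZ .
\]
Its $\h$-singular values are $1$ and $a$, so $\|Z\|_p=(1+a^p)^{1/p}$, while its eigenvalues are $\pm i\sqrt a$, so $\|\bar Z\|_{\b_p(\ele)}=2^{1/p}\sqrt a$; for small $a$ these are far apart (and the same happens inside any infinite-dimensional diagonal example). Consequently the statement you reduce to --- self-duality of the skew-adjoint parts of the Schatten ideals of $\ele$, with $\ele$-norms throughout --- is not the proposition. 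Your norming-functional argument computes $\|\bar Z\|_{\b_p(\ele)}$, not $\|Z\|_p$: your test operators $Y$ do lie in $\gq$ (their eigenvectors are in $\h$, by the cited theorems, because $\bar Z$ \emph{is} an extension from $\h$), but they satisfy $\|Y\|_{\b_q(\h)}\ge\|\bar Y\|_{\b_q(\ele)}=1$, so they yield only a lower bound for the dual norm that never exceeds $\|\bar Z\|_{\b_p(\ele)}$, hence cannot reach $\|Z\|_p$.

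The surjectivity step has the same defect plus a circularity. An arbitrary skew-adjoint $W\in\b_p(\ele)$ need not come from $\h$ at all: for $\phi\in\ele\setminus\h$ the rank-one skew-adjoint operator $W=i[\,\cdot\,,\phi]\,\phi$ maps $\h$ outside $\h$, so the image of $\gp$ under extension is a proper (merely dense) subspace of the skew-adjoint part of $\b_p(\ele)$, and a representing operator produced by classical $\ele$-duality has no reason to lie in that image. The eigenvector-localization results you invoke (Theorems \ref{symmetrizable results}(iv), \ref{proper op}(iii)) apply only to operators that are \emph{already} extensions of operators on $\h$, so they cannot close this gap. This is precisely why the paper's own proof never leaves $\h$ for the duality itself: given $\vp\in(\gq)^*$ it forms a complex-homogeneous extension on $\CC\,\gq$, applies Hahn--Banach inside $\b_q(\h)$ together with the classical duality $(\b_q(\h))^*\simeq\b_p(\h)$ to obtain a representing $Z\in\b_p(\h)$ with $\|Z\|_p=\|\vp\|$, and then proves $Z\in\b_{as}^A(\h)$ by testing against the rank-one proper operators $f\otimes g$. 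The Hilbert space $\ele$ enters only to show that the pairing $Tr(ZX)$ is real-valued, via Lidskii's theorem and Lalesco's inequality --- which is the one portion of your proposal that does agree with the paper.
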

\begin{proof}
We will  only prove the case where $1<p<\infty$; the other two cases are analogous. We first check that the isomorphism 
is well defined. Clearly, any functional of the form $X \mapsto Tr(ZX)$, $Z \in \gp$\,, $X \in \gq$, is continuous. 
To show that this functional is real  note that $ZX$ is a proper  operator. According to Theorem \ref{proper op}, and noting that $ZX$ is a  compact operator on $\h$,  it follows that 
$\sigma_{\h}(ZX)=\sigma_{\ele}(\bar{Z}\bar{X})$, taking into account the multiplicity of the nonzero eigenvalues. Moreover, $ZX$ is a trace class operator on $\h$ by H\"older's inequality  and its extension $\bar{Z}\bar{X}$ is also trace class on $\ele$ by Lalesco's inequality \cite{lalesco}. Then by Lidskii's theorem  we find that
\[  Tr(ZX)=\sum_{i=1}^{\infty} \lambda_i(ZX) = Tr_{\ele}(\bar{Z}\bar{X}),  \]
where  $\lambda_i(ZX)$ are the eigenvalues counted with multiplicity and  $Tr_{\ele}$ is the usual trace in $\b(\ele)$. Since the operators $Z,X$ can be extended to anti-symmetric compact operators on $\ele$ 
and  the trace of the product of two anti-symmetric operators is real, we get that $Tr(ZX)$ is real. 

 Let us prove that the map in equation 
 (\ref{eq duality}) is surjective. Let $\vp$ be a functional in $(\gq )^*$. Extend this functional to the  complex vector space $S:=\CC \, \gq$ by defining $\vp_1(wX)=w\vp(X)$, $w \in \CC$. It is well defined: if $zX=wY$, then $-\bar{z}X=(zX)^+=(wY)^+=-\bar{w}Y$. Thus, $Re(z)X=Re(w)Y$ and  $Im(z)X=Im(w)Y$. Suppose that $Re(z)\neq 0$, then 
$$\vp_1(zX)=z\vp(X)=z\vp (\tfrac{Re(w)}{Re(z)}Y)=z \tfrac{Re(w)}{Re(z)} \vp(Y)=w \vp(Y)=\vp_1(wY) $$   
 Also note that 
\[ \|\vp \|= \sup_{\|X\|_q=1 \, , \, X \in \gq} | \vp(X)| \leq \sup_{\|X\|_q=1 \, , \, X \in S} | \vp_1(X)| = \| \vp_1 \|.   \]    
Pick $X \in \gq$ and $w \in \CC$ such that $\| w X \|_q=1$.  Then,
\[ |\vp_1(wX)| = |w||\vp(X)| = | \vp (\tfrac{X}{\,\|X\|_q})| \leq \| \vp\|.  \]
Hence we have $\| \vp_1 \|=\| \vp\|$. Next $\vp_1$ can be extended  to a functional $\tilde{\vp}_1$ 
on  $\b_q(\h)$ with the same norm, by the Hahn-Banach theorem. Since $(\b_q(\h))^* \simeq \b_p(\h)$, it follows that $\tilde{\vp}_1=Tr(Z \, \cdot \,)$, for some $Z \in \b_p(\h)$.

Let us prove that $Z$ is an $A$-anti-symmetric operator. 
To this end, we consider the following rank one proper operators: $(f\otimes g)(h)=[h,g]f$ for any $f,g,h \in \h$.
It is easily seen that $(f\otimes g)^+=g\otimes f$. Set $X=f\otimes g$. Then these operators may be decomposed 
as the sum of  real and imaginary parts with respect to the adjoint on $\ele$, that is, 
$$Re(X)=\tfrac{1}{2}( (f\otimes g) + (g\otimes f)  ),\, \, \, \, Im(X)=(\tfrac{1}{2i}( (f\otimes g) - (g\otimes f) )$$
and $X=Re(X) + i Im(X)$, with $Re(X)$ and $Im(X)$   $A$-symmetric.  Since $\vp$ is real-valued, we find that
\begin{align*}
\tilde{\vp}_1(X^+) &=\tilde{\vp}_1(Re(X)-iIm(X))=-i\tilde{\vp}_1(iRe(X)) -  \tilde{\vp}_1(iIm(X)) \\
& =-i \vp(iRe(X)) - \vp(i Im(X))= \overline{   i\vp(iRe(X)) -\vp(i Im(X)) }=- \overline{\tilde{\vp}_1 (X)}\, .
\end{align*}
We thus get
\begin{equation}\nonumber 
Tr(Z(g\otimes f))= \tilde{\vp}_1 (g\otimes f)= - \overline{\tilde{\vp}_1 (f\otimes g)}=- \overline{Tr(Z(f\otimes g))}.
\end{equation}
Now it suffices to note that for all $f,g \in \h$,
\[  [Zg,f]=Tr(Z(g\otimes f))= -   \overline{Tr(Z(f\otimes g))} =- [g,Zf],           \]
which proves that $Z$ is $A$-anti-symmetric. To finish the proof of this lemma, we point out that the isomorphism is isometric because $\|\vp \|= \| \vp_1 \|= \| \tilde{\vp}_1\| = \| Z\|_p$.
\end{proof}

 Orbits of finite rank projections in the compatible Grassmannian are related with variational spaces in many-particle Hartree-Fock theory. To briefly show this relationship,  we introduce the following infinite dimensional Stiefel type manifold: for $n \in \NN$, 
\[  \mathcal{C}_n = \{  \, (h_1, h_2 , \ldots , h_n ) \in \h^n   \, : \, [h_i , h_j]=\delta_{i  j}   \, \}. \]
We emphasize that $n$-tuples of vectors in $\mathcal{C}_n$ have  orthonormal coordinates  with respect to the inner product defined by $A$ on the Hilbert space $\ele$. 
Next we consider the following equivalence relation:
\[  (h_1, h_2 , \ldots , h_n ) \sim (g_1, g_2 , \ldots , g_n ) \, \text{ if } \, \sum_{i=1}^n U_{ij} h_i = g_j, \, \, \, j=1, \ldots, n,
\text{ for some $(U_{i j}) \in \mathcal{U}(\CC^n)$}, \] 
where $\mathcal{U}(\CC^n)$ is the unitary group of all $n \times n$ matrices. In the case where $\h=H^1(\RR^3)$ is the first order Sobolev space of $\RR^3$, $\ele=L^2(\RR^3)$ and  $(Af)(x)=\frac{1}{1+|x|^2}f(x)$, the above defined $\mathcal{C}_n$  is usually known as the {\it Stiefel manifold in quantum chemistry} and the quotient space $\mathcal{C}_n / \sim$ is called the {\it Grassmann manifold in quantum chemistry}.

The geometric structure of these Banach manifolds  was studied in \cite{chm}, and does not depend on the specific afore-mentioned function spaces. The main reason for studying geometric properties of these manifolds is to provide a rigorous setting  to apply critical point theory in Hartree-Fock type equations (see \cite{armel}).     In particular, it was shown that  $\mathcal{C}_n / \sim$ is homeomorphic to the orbit $\gg_A \cdot Q_0$ of an  $A$-symmetric projection $Q_0$ of rank $n$. This allows to endow  $\mathcal{C}_n / \sim$   with a manifold structure by making this homeomorphism into a diffeomorphism.

According to Remark \ref{rem connected}, the orbit of an $A$-symmetric projection $Q_0$ of rank $n$ is connected, and it is given by
\[  \gg_A \cdot Q_0 = \{  Q \in \b(\h) \, : \, Q^2=Q, \, Q^*A=AQ, \, \rank(Q)=n \,   \}.   \]
The following result can be seen as a generalization of the fact that the unitary orbit of a finite rank orthogonal projection is a strong symplectic leaf in the Banach Lie-Poisson space of trace class operators.


\begin{coro}\label{symplectic leaves}
For $1\leq p \leq \infty$,  the space $\mathfrak{g}_p$ is a real Banach Lie-Poisson space, and for any $Q_0 \in Gr_A$ such that $rank(Q_0)< \infty$, 
the orbit $\gg_A \cdot Q_0$ is a strong symplectic leaf in  $\mathfrak{g}_p$ endowed with the form
\[  \omega_{Gr_A}(GQ_0G^{-1}) (\, [GXG^{-1},GQ_0G^{-1}]\,, \,[GYG^{-1},GQ_0G^{-1}] \, )= Tr (Q_0[X,Y]), \] 
where $X,Y \in \mathfrak{g}_p$ and  $G \in \gg_{A}$.
\end{coro}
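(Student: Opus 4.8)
The plan is to prove the two assertions separately, using Proposition \ref{duality} as the cornerstone for the Lie--Poisson structure and the Odzijewicz--Ratiu theory \cite{or} for the symplectic leaf.

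\textbf{The Banach Lie--Poisson structure.} First I would identify the dual of $\gp$ via Proposition \ref{duality}: it is $\gq$ for $1<p<\infty$, it is $\mathfrak{g}_1$ for $p=\infty$, and it is $\b_{as}^A(\h)$ for $p=1$. In each case the dual carries the commutator bracket $[Z,W]=ZW-WZ$; I would check that this is a genuine Banach--Lie bracket by noting that the commutator of two $A$-anti-symmetric operators is again $A$-anti-symmetric (exactly as in the computation following Theorem \ref{proper op}), that it respects the Schatten ideals (so it lands again in $\gq$, resp. $\mathfrak{g}_1$, resp. $\b_{as}^A(\h)$), and that it is continuous for the relevant norm by H\"older's inequality. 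It then remains to verify the Lie--Poisson condition $\mathrm{ad}_x^*(\gp)\subseteq\gp$. For $1<p<\infty$ this is automatic, since $\gp$ is reflexive ($\gp^{**}=\gp$ by two applications of Proposition \ref{duality}). For the boundary cases $p=1,\infty$ I would use the trace identity $Tr(\xi[x,y])=Tr([\xi,x]y)$, valid for the relevant products of proper operators, to compute $\mathrm{ad}_x^*\xi=[\xi,x]$ under the trace pairing; since $[\xi,x]$ is $A$-anti-symmetric and stays in the appropriate ideal (trace class times bounded is trace class; compact times trace class is compact), it remains in $\gp$, as required.

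\textbf{The orbit as a coadjoint orbit.} Since each $GQ_0G^{-1}$ is an $A$-symmetric idempotent while $\gp$ consists of $A$-anti-symmetric operators, I would first pass to $iQ_0$, which is $A$-anti-symmetric and of finite rank, hence an element of $\gp$. Conjugation $\mu\mapsto G\mu G^{-1}$ by $G\in\gg_A$ preserves $\gp$ (it preserves $A$-anti-symmetry because $G^+=G^{-1}$, and the Schatten class because $\b_p(\h)$ is an ideal) and is, up to sign, the coadjoint action dual to the bracket above; thus the coadjoint orbit of $iQ_0$ is $i(\gg_A\cdot Q_0)$. By Remark \ref{rem connected} this orbit is the full set of rank-$n$ $A$-symmetric projections and coincides, as a set, with the orbit of $iQ_0$ under each of the smaller groups $\gg_{q,A}$, so the leaf is genuinely independent of $p$. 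By Corollary \ref{est difer} together with the finite-rank submanifold structure (cf. the remark after it and \cite{chm}) the orbit is an embedded submanifold; the Odzijewicz--Ratiu theory \cite{or} then identifies it as a symplectic leaf of $\gp$ carrying the Kirillov--Kostant--Souriau form $\omega(\mathrm{ad}_\xi^*\nu,\mathrm{ad}_\eta^*\nu)=\langle\nu,[\xi,\eta]\rangle$. Writing $\nu=G(iQ_0)G^{-1}$, $\xi=GXG^{-1}$, $\eta=GYG^{-1}$ and invoking the conjugation invariance of the trace reduces this, up to the standard factor coming from the $iQ_0$ shift, to $Tr(Q_0[X,Y])$, which is the stated form $\omega_{Gr_A}$.

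\textbf{Strong nondegeneracy (the main obstacle).} The only genuinely non-formal step is showing that the leaf is \emph{strong}, i.e. that $\omega^\flat\colon T\mathcal{O}\to T^*\mathcal{O}$ is a Banach space isomorphism and not merely injective; this is exactly where $\rank(Q_0)=n<\infty$ enters. Extending to $\ele$ and decomposing $\ele=R(\bar{Q}_0)\oplus R(\bar{Q}_0)^\perp$ with $\dim R(\bar{Q}_0)=n$, an element of $\gq$ is a block $\left(\begin{smallmatrix} a & -b^* \\ b & d\end{smallmatrix}\right)$, and the tangent vector $[X,iQ_0]$ depends only on the off-diagonal block $b\colon R(\bar{Q}_0)\to R(\bar{Q}_0)^\perp$. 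Every such $b$ has rank at most $n$, so it lies in all Schatten classes and the norms $\|\,\cdot\,\|_p$ are mutually equivalent on this space; in particular the tangent space is the same for all $p$ and is self-dual under the Hilbert--Schmidt pairing. A direct computation (in which the diagonal contribution $Tr[a_X,a_Y]$ vanishes) identifies $\omega$ on these blocks with a constant multiple of $\mathrm{Im}\,Tr(c^*b)$, the canonical symplectic form of this self-dual space, whence $\omega^\flat$ is an isomorphism and the leaf is strong. I expect this finite-rank reduction, together with the collapse of all Schatten norms to a single equivalence class on the tangent directions, to be the crux; the remaining identifications are formal consequences of Proposition \ref{duality} and \cite{or}.
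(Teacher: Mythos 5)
Your first two steps run essentially parallel to the paper's own proof: the paper also feeds Proposition \ref{duality}, the finite-rank conjugation of Remark \ref{rem connected} (which gives $\gg_A\cdot Q_0=\gg_{p,A}\cdot Q_0$ for every $p$), and the coadjoint invariance of $i\mathfrak{g}_p$ into the Odzijewicz--Ratiu machinery of \cite[Theorem 7.4]{or}. The one structural difference there is minor: the paper obtains the smooth structure and the isotropy-subgroup hypothesis from Corollary \ref{struc grass comp rest} (a forward reference to Section \ref{res comp gra}), while you invoke Corollary \ref{est difer} together with the equivalence of all the $p$-norms on operators of rank at most $2n$; that variation is harmless.

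The genuine gap is in your third step, the one you yourself call the crux. The tangent space at $Q_0$ is \emph{not} the space of all blocks $b\colon R(\bar{Q}_0)\to R(\bar{Q}_0)^{\perp}$. A tangent vector must be (the extension of) an operator acting on $\h$, so only those $b$ whose range lies in $N(Q_0)=\h\cap R(\bar{Q}_0)^{\perp}$ occur; moreover the norm the tangent space actually carries --- the $p$-norm of $\b_p(\h)$, equivalently the operator norm of $\b(\h)$ --- corresponds, under $b\mapsto (b(e_1),\dots,b(e_n))$ for an $[\ ,\ ]$-orthonormal basis $e_i$ of $R(Q_0)$, to the $\h$-norm of the range vectors. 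The Hilbert--Schmidt pairing $Tr(c^{*}b)=\sum_i [b(e_i),c(e_i)]$, by contrast, is the $\ele$-inner product of those range vectors, i.e. $\langle A\,\cdot\,,\cdot\,\rangle$. Hence, up to constants and the complex structure, $\omega^{\flat}$ is the map $N(Q_0)^{n}\to N(Q_0)^{n}$, $g\mapsto P_{N(Q_0)}Ag$ (with $P_{N(Q_0)}$ the $\h$-orthogonal projection): it is injective and has dense range, but it is not surjective unless $A$ is invertible. For instance, if $A$ is compact and $Q_0$ is a spectral projection of $A$, this map is just the compact operator $A|_{N(Q_0)}$ on an infinite-dimensional space. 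So ``self-dual under the Hilbert--Schmidt pairing'' is precisely the assertion that fails in the compatible setting; your argument silently identifies the $\h$-topology of the tangent space with the $\ele$-Hilbert--Schmidt topology, which is legitimate only in the classical case $\h=\ele$ that the corollary is meant to generalize. As written, your computation yields weak nondegeneracy of $\omega$, not the statement's key word ``strong'', and no amount of care with constants repairs it: an honest version of the same computation points in the opposite direction. Any proof of strength must therefore proceed by a different mechanism, and this is exactly why the paper avoids the direct computation and instead derives strength from the injective-immersion criterion \cite[Theorem 7.5]{or}, applied through Corollary \ref{struc grass comp rest}.
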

\begin{proof}
According to Remark \ref{rem connected}, any pair of $A$-symmetric projections of the same rank are conjugated by 
an invertible operator $G$ such that $G- 1$ has finite rank.
Then for $1 \leq p \leq \infty$ the orbit may be described as
\[ \gg_A \cdot Q_0 = \{  \, GQ_0G^{-1}\, : \, G  \in \gg_{p,A} \,   \}. \]
Now we are going to verify the hypothesis of \cite[Theorem 7.4]{or}, which in particular will imply that $\mathfrak{g}_p$ is a 
real Banach Lie-Poisson space for $1 \leq p \leq \infty$ and the formula above for the symplectic form. By Proposition \ref{duality}, the predual
of $\mathfrak{g}_q$ is given by  $\mathfrak{g}_p$, where $1 \leq q < \infty$  and $q^{-1} + p^{-1}=1$. Apparently, the coadjoint orbit
satisfies
$  \{ \, GXG^{-1} \, : \, X \in i\mathfrak{g}_p\,  \} \subseteq  i\mathfrak{g}_p  $ for any $G \in \gg_{q,A}$.
Since $Q_0$ has finite rank, it follows that $Q_0 \in i\mathfrak{g}_p$. We will show in Corollary \ref{struc grass comp rest} that 
the projection map induced by the action is a submersion, which means that
$\{ \, G \in \gg_{q,A} \, : \, GQ_0=Q_0G  \,  \}$ is a Banach-Lie subgroup of $\gg_{q,A}$. Again due to the fact that $Q_0$ has finite rank,
it follows that $\gg_q \cdot Q_0 = \gg_p \cdot Q_0$, and by Corollary \ref{struc grass comp rest}, the inclusion map
$\gg_{p,A}
 \cdot Q_0 \hookrightarrow i \mathfrak{g}_p=Q_0 + i \mathfrak{g}_p$ is an injective immersion. By \cite[Theorem 7.5]{or} this latter fact implies that the symplectic form is strong.
\end{proof}


\section{The restricted compatible Grassmannian}\label{res comp gra}

Given a fixed $p$, $1\le p \le \infty$, and a compatible subspace $\s_0\subset \h$, with infinite dimension and co-dimension, with $A$-symmetric idempotent $Q_0=Q_{\s_0}$, we shall define the $p$-restricted  compatible Grassmannian, induced by the direct sum  decomposition 
$$
\h=\s_0 \, \dot{+} \, \n_0,
$$
where $\n_0=\ker(Q_0)$. This decomposition is non orthogonal with respect to the inner product on $\h$.
We shall adopt the following definition, which in the case of the usual restricted Grassmannian, i.e. $A=1$, $\h=\ele$,  is a property equivalent to the definition given, for instance, in \cite{pressleysegal}. 
A compatible subspace $\s$ belongs to the {\it $p$-restricted  compatible Grassmannian} $Gr_{res, p}=Gr_{res, p, Q_0}^A(\h)$ if 
$$
Q_\s-Q_0\in\b_p(\h).
$$
We may think of $Gr_{res,p}$ as the set of all  $A$-symmetric projections satisfying the above equation. We endow 
$Gr_{res,p}$  with the topology defined by $(Q_1,Q_2)\mapsto \|Q_1 - Q_2 \|_p$. Note that
 $Gr_{res,p}$ is smoothly acted on by the group $\gg_{p,A}$. Indeed, if $Q \in Gr_{res, p}$ and $G\in\gg_{p,A}$,
it is clear that $GQG^{-1}\in Gr_A$. Moreover, since $G=1+K$ and $G^{-1}=1+K'$ with $K,K'\in\b_p(\h)$,
$$
GQG^{-1}-Q_0=(1+K)Q(1+K')-Q_0=KQ+QK'+KQK'+(Q-Q_0)\in\b_p(\h).
$$   
As with the usual restricted Grassmannian, the  restricted compatible Grassmannian $Gr_{res,p}$  is not connected. We shall see that the connected components are parametrized by the integers.

\begin{rem}\label{difeo exp} Recall that $\rho_\h$ denotes the spectral radius in $\h$. Note that the exponential map between the open sets (in the $p$-norm)
$$
\{\, X\in\mathfrak{g}_p \, : \, \rho_\h(X)<\pi \, \} \to  \{\, G\in\gg_{p,A} \, : \, -1\notin\sigma_\h(G) \,\}
$$ 
is an analytic diffeomorphism. To prove this assertion, first note that if $X\in \mathfrak{g}_p$ and $\rho_\h(X)<\pi$, then $-1\notin \sigma_\h(e^X)$. Suppose that $e^X=e^Y$ with $X,Y\in\mathfrak{g}_p$ and $X, Y$ as above. According to Theorem \ref{symmetrizable results}, their  extensions $\bar{X}, \bar{Y}$ to $\ele$ verify that $\|\bar{X}\|_{\b(\ele)} = \rho_{\ele}(\bar{X}) \leq \rho_{\h}(X)< \pi$, and $\|\bar{Y}\|_{\b(\ele)}<\pi$. 
On the other hand, $e^{\bar{X}}, e^{\bar{Y}}$ are unitary operators in $\ele$ whose restrictions to $\h$, 
namely $e^X, e^Y$, coincide. It follows that $e^{\bar{X}}=e^{\bar{Y}}$. The bound on the $\b(\ele)$ norms implies that $\bar{X}=\bar{Y}$, and therefore $X=Y$.
The spectrum of $G\in\gg_{p,A}$ lies in the unit circle  and consists  (with the possible exception of $1\in\sigma_\h(G)$)  of eigenvalues with finite multiplicity. Thus if $-1\notin\sigma_\h(G)$, one can define the inverse of the exponential, by means of the usual determination of the logarithm (which is singular in the real negative 
axis, that does not intersect $\sigma_\h(G)$). This map is clearly analytic in $1+\b_p(\h)$. 
Apparently, this logarithm takes values in $\{X\in\mathfrak{g}_p: \rho_\h(X)<\pi\}$.
\end{rem}

Given $Q_1 \in Gr_{res,p}$\,, we denote by $(Gr_{res ,p})_{Q_1}$  the connected component of $Q_1$ in the  restricted compatible Grassmannian.

\begin{prop}\label{comp gras res} 
The following assertions hold:
\begin{enumerate}
\item[i)] The action of $\gg_{p,A}$ on $(Gr_{res ,p})_{Q_1}$ is transitive.
\item[ii)] The map 
$$
\gg_{p,A}\to (Gr_{res ,p})_{Q_1} \subseteq Q_1 + \b_p(\h), \ \ G\mapsto GQ_1G^{-1}
$$
has continuous local cross sections, when both spaces are regarded with the $p$-norm.
\end{enumerate}
\end{prop}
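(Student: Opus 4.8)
The plan is to prove (ii) first and then deduce (i) from it together with the connectedness of $\gg_{p,A}$. The essential tool is the exponential cross section of Proposition \ref{cont cross}: for $Q\in Gr_A$ with $\|Q-Q_1\|<r_{Q_1}$ there is a unique $Q_1$-codiagonal $X_Q\in\b_{as}^A(\h)$ with $e^{X_Q}Q_1e^{-X_Q}=Q$ and $e^{2X_Q}=\epsilon_Q\epsilon_{Q_1}$, and the assignment $Q\mapsto e^{X_Q}\in\gg_A$ is a continuous section for the uniform norm. What must be upgraded is that, when restricted to $Gr_{res,p}$, this section takes values in $\gg_{p,A}$ and is continuous for the $p$-norm.

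For (ii) I would take $Q\in Gr_{res,p}$ with $\|Q-Q_1\|_p$ small; since $\|\,\cdot\,\|\le\|\,\cdot\,\|_p$ this forces $\|Q-Q_1\|<r_{Q_1}$, so $X_Q$ is defined. Because $Q,Q_1\in Gr_{res,p}$ we have $Q-Q_1\in\b_p(\h)$, whence
$$
e^{2X_Q}-1=\epsilon_Q\epsilon_{Q_1}-1=(\epsilon_Q-\epsilon_{Q_1})\epsilon_{Q_1}=2(Q-Q_1)\epsilon_{Q_1}\in\b_p(\h),
$$
using that $\b_p(\h)$ is an ideal. Since $\|e^{2X_Q}-1\|<1$, the logarithm series $2X_Q=\sum_{n\ge1}\frac{(-1)^{n+1}}{n}(e^{2X_Q}-1)^n$ converges in $\b(\h)$ (as in Proposition \ref{cont cross}) to an element of $\b_{as}^A(\h)$; the same series converges in $\b_p(\h)$ because $\|(e^{2X_Q}-1)^n\|_p\le\|e^{2X_Q}-1\|^{n-1}\|e^{2X_Q}-1\|_p$, so $2X_Q\in\b_p(\h)$ as well. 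Thus $X_Q\in\mathfrak{g}_p=\b_{as}^A(\h)\cap\b_p(\h)$ and $e^{X_Q}\in\gg_{p,A}$. For $p$-norm continuity I would invoke Remark \ref{difeo exp}: the map $Q\mapsto e^{2X_Q}=\epsilon_Q\epsilon_{Q_1}$ is a $p$-norm continuous (indeed polynomial) map into $\gg_{p,A}$ whose image avoids $-1$ in the spectrum, and the logarithm is analytic there, so $Q\mapsto X_Q$ and hence $Q\mapsto e^{X_Q}$ are $p$-norm continuous. This yields a continuous local section at $Q_1$, and transporting it by the action (the $p$-norm homeomorphisms $R\mapsto G_0RG_0^{-1}$) produces continuous local sections near every $G_0Q_1G_0^{-1}$.

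Then (i) follows formally. The local section at $Q_1$ shows that a $p$-neighborhood of $Q_1$ in $Gr_{res,p}$ is contained in the orbit $\gg_{p,A}\cdot Q_1$, and translating by the action shows the orbit is open in $Gr_{res,p}$. As distinct orbits are disjoint open sets, each orbit is also closed, hence clopen. Since $\gg_{p,A}$ is connected, the orbit $\gg_{p,A}\cdot Q_1$ is connected, and a connected clopen subset containing $Q_1$ must coincide with the connected component $(Gr_{res,p})_{Q_1}$. Therefore $\gg_{p,A}\cdot Q_1=(Gr_{res,p})_{Q_1}$, which is exactly transitivity of the action on the component.

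The main obstacle is the passage from the uniform norm of Proposition \ref{cont cross} to the $p$-norm. Membership $X_Q\in\b_p(\h)$ is settled by the ideal property together with the convergence of the logarithm series in $\b_p(\h)$; the more delicate point is the $p$-norm continuity of the section, which rests on the analyticity of the exponential/logarithm correspondence in Remark \ref{difeo exp} rather than on any estimate available only in the operator norm. A small but necessary check is that one neighborhood serves both norms simultaneously, which is guaranteed by $\|\,\cdot\,\|\le\|\,\cdot\,\|_p$.
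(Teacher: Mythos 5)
Your proposal is correct and follows essentially the same route as the paper's proof: the operator-norm exponential section of Proposition \ref{cont cross}, the identity $e^{2X_Q}-1=2(Q-Q_1)\epsilon_{Q_1}\in\b_p(\h)$, the $p$-norm convergence/analyticity of the logarithm (Remark \ref{difeo exp}) to get $X_Q\in\mathfrak{g}_p$ and continuity, translation by the action, and the open--clopen--connectedness argument for transitivity. The only difference is cosmetic: you prove (ii) first and deduce (i), whereas the paper establishes openness of the orbit first and then the continuity of the section.
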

\begin{proof}
\noi $i)$
Since $\gg_{p,A}$ is connected, $\gg_{p,A}\cdot Q_1$ is contained in the component of $Q_1$ in $Gr_{res, p}$\,. We first prove that $\gg_{p,A}\cdot Q_1$ is open and closed in $Gr_{res, p}$\,. To this end, note that $Q_1$ is an interior point of $Gr_{res, p}$\,.  Let $Q \in Gr_{res,p}$ such that $\|Q- Q_1 \|_p< r_{Q_1}$, where $r_{Q_1}$ was defined in Section \ref{diff structure gra}. Since $\|Q- Q_1 \| \leq \|Q-Q_1\|_p <r_{Q_1}$,  
there exists an $A$-anti-symmetric operator  $X_Q$ which is co-diagonal with respect to $Q_1$ and such that $Q=e^{X_Q}Q_1e^{-X_Q}$.  

We claim that $X_Q\in\b_p(\h)$: note that $Q,Q_1\in Gr_{res, p}$ implies that $Q-Q_1\in\b_p(\h)$. Then
 $$
 \epsilon_Q-\epsilon_{Q_1}=2(Q-Q_1)\in\b_p(\h).
 $$
 On the other hand, by construction, 
 $$
 \epsilon_Q-\epsilon_{Q_1}=e^{2X_Q}\epsilon_{Q_1}-\epsilon_{Q_1}=(e^{2X_Q}-1)\epsilon_{Q_1}.
 $$
 It follows that $e^{2X_Q}-1\in\b_p(\h)$, that is, $e^{2X_Q}\in\gg_{p,A}$. Since $\rho_\h(e^{2X_Q}-1)\leq \| e^{2X_Q} - 1 \|< 1$,  then  $-1 \notin \sigma_\h(e^{2X_Q})$. By Remark \ref{difeo exp} we get that $X_Q \in \b_p(\h)$, and our claim is proved. Hence we obtain that $Q=e^{X_Q}Q_1e^{-X_Q}\in \gg_{p, A}\cdot Q_1$.
 
 Let $G_0Q_1G_0^{-1}$ be any other element in this orbit, with $G_0=1+K_0$, $G_0^{-1}=1+K'_0$, and $K_0,K'_0\in\b_p(\h)$. We are going to show that it is also an interior point of the restricted Grassmannian. If $Q\in Gr_{res, p}$ satisfies 
$$
\|Q-G_0Q_1G_0^{-1}\|_p<\frac{r_{Q_1}}{\|G_0\|\|G_0^{-1}\|},
$$
then
$$
\|G_0^{-1}QG_0-Q_1\|_p=\|G_0^{-1}(Q-G_0Q_1G_0^{-1})G_0\|_p\le \|G_0^{-1}\|\|Q-G_0Q_1G_0^{-1}\|_p\|G_0\|<r_{Q_1}.
$$
It follows that $G_0^{-1}QG_0=G^{-1}Q_1G$ for some $G \in \gg_{p,A}$, and we thus get $Q \in \gg_{p,A} \cdot Q_1$. Thus the orbit $\gg_{p, A}\cdot Q_1$ is open in the connected component of $Q_1$ in $Gr_{res, p}$. Note that this assertion is valid for any element $Q$ of this component. Then the complement of $\gg_{p, A}\cdot Q_1$ inside this component, which is a union of disjoint and open orbits, is itself open. Thus $\gg_{p, A} \cdot Q_1$ is also closed in $(Gr_{res, p})_{Q_1}$, and therefore $\gg_{p, A}\cdot Q_1=(Gr_{res, p})_{Q_1}$.

\smallskip

\noi $ii)$ We will only  define a cross section in a neighborhood of $Q_1$\,. A standard translation procedure following the idea of  $i)$ can be 
used to construct a local cross section at any other point of the connected component $(Gr_{res,p})_{Q_1}$\,.

 Let $Q \in Gr_{res,p}$ such that $\| Q- Q_1 \|_p< r_{Q_1}$. The operator $X_Q \in \mathfrak{g}_p$ that we have just defined in $i)$ is actually given by the usual logarithm series
$$
X_Q=\frac12 \, log((2Q_1-1)(2Q-1))=\frac12 \, \sum_{n\ge 1} \frac{(-1)^{n+1}}{n} \, ((2Q_1-1)(2Q-1)-1)^n,
$$
which is convergent in the $p$-norm because $r_{Q_1}<1$. Since the series is the uniform limit of the partial sums, which are continuous functions in the $p$-norm, we can conclude that the map
$$
\{Q\in Gr_{res, p}: \|Q-Q_1\|_p<r_{Q_1}\} \ni Q \mapsto X_Q\in \mathfrak{g}_p
$$
is continuous in the $p$-norm.
\end{proof}

Let us characterize the connected components of $Gr_{res, p}$ in terms of the Fredholm index, as with the usual restricted Grassmannian.  To this effect, several results in the paper \cite{ass} by J. Avron, R. Seiler and B. Simon will be useful. First and foremost, the notion of index for a pair of orthogonal projections. 

Note the following fact. If $Q_1, Q_2\in Gr_{res, p}$, their self-adjoints extensions $\bar{Q}_1$ and $\bar{Q}_2$  verify that $\bar{Q}_1-\bar{Q}_2\in\b_p(\ele)$. Indeed, by Theorem \ref{symmetrizable results}, $\bar{Q}_1-\bar{Q}_2$ 
are self-adjoint compact operators, whose eigenvalues coincide with the eigenvalues of $Q_1-Q_2$. Moreover, the absolute values of these eigenvalues, 
by a classical result \cite{lalesco}, are bounded by the singular values of $Q_1-Q_2$. It follows that $\bar{Q}_1-\bar{Q}_2\in\b_p(\ele)$. Therefore, $\bar{Q}_1,\bar{Q}_2$ belong to the classical restricted Grassmannian in the Hilbert space $\ele$, given by the orthogonal polarization
$$
\ele= R(\bar{Q}_0)\oplus N(\bar{Q}_0).
$$
In particular, this implies that 
$$
\bar{Q}_2\bar{Q}_1|_{R(\bar{Q}_1)}:R(\bar{Q}_1)\to R(\bar{Q}_2)
$$
are Fredholm operators. According to \cite{ass},  the index of this operator  is the index $index(\bar{Q}_1,\bar{Q}_2)$ of the pair $\bar{Q}_1,\bar{Q}_2$.
\begin{teo}\label{components gres}
Let $Q_1,Q_2\in Gr_{res, p}$. Then $Q_1$ and $Q_2$ belong to the same connected component if and only if $index(\bar{Q}_1,\bar{Q}_0)=index(\bar{Q}_2,\bar{Q}_0)$.
\end{teo}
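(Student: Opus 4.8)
The plan is to reduce the connected-component question for $Gr_{res,p}$ to the corresponding (and known) statement for the classical restricted Grassmannian in the Hilbert space $\ele$, using the extension map $Q\mapsto \bar Q$ as a bridge. First I would establish the forward implication: if $Q_1$ and $Q_2$ lie in the same connected component of $Gr_{res,p}$, then by Proposition \ref{comp gras res} they are conjugate by some $G\in\gg_{p,A}$, say $Q_2=GQ_1G^{-1}$. Extending to $\ele$ gives $\bar Q_2=\bar G\,\bar Q_1\,\bar G^{-1}$ with $\bar G$ a unitary of the form $1+\bar K$, $\bar K\in\b_p(\ele)$ (compactness of the extension and the singular-value bound coming from \cite{lalesco}, as already invoked before the theorem statement). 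Since the Fredholm index of a pair of projections is invariant under conjugation by such unitaries, and since $\bar Q_0$ is fixed, one gets $index(\bar Q_1,\bar Q_0)=index(\bar Q_2,\bar Q_0)$.

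For the converse, suppose $index(\bar Q_1,\bar Q_0)=index(\bar Q_2,\bar Q_0)$. By the additivity/cocycle property of the index of pairs established in \cite{ass}, this forces $index(\bar Q_1,\bar Q_2)=0$. The key classical input is that in the ordinary restricted Grassmannian of $\ele$ (with respect to the polarization $\ele=R(\bar Q_0)\oplus N(\bar Q_0)$), two points in the same index class are connected, and more precisely there is a unitary $U=1+\bar K$ with $\bar K\in\b_p(\ele)$ implementing $\bar Q_2=U\bar Q_1 U^{-1}$; vanishing of $index(\bar Q_1,\bar Q_2)$ is exactly what guarantees such a $U$ exists. The crucial extra step, and the one where the compatible structure must be exploited, is to arrange that $U$ can be chosen to restrict to an element of $\gg_{p,A}$, i.e. that $U(\h)=\h$. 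This is the heart of the matter: an arbitrary unitary implementing the conjugation need not preserve $\h$.

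The main obstacle is therefore precisely this descent from $\b(\ele)$ back to $\b(\h)$. The natural strategy is to avoid producing $U$ abstractly and instead build it from data already living in $\h$. I would use the permanence-of-geodesics machinery: whenever $\|Q-Q_1\|_p<r_{Q_1}$, Proposition \ref{comp gras res} produces $X_Q\in\mathfrak{g}_p$ with $Q=e^{X_Q}Q_1e^{-X_Q}$ and $e^{X_Q}\in\gg_{p,A}$. So within the same index class one can try to connect $\bar Q_1$ to $\bar Q_2$ by a finite chain of such small steps, or by a continuous path in the classical restricted Grassmannian, and then lift each local piece using the cross-section and the logarithm estimates of Remark \ref{difeo exp}. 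Concretely, a path $t\mapsto \bar Q(t)$ in the $\ele$-restricted Grassmannian joining $\bar Q_1$ to $\bar Q_2$ within the zero-index class can be covered by finitely many balls on each of which the local lift takes values in $\gg_{p,A}$; the $A$-symmetry of each intermediate projection (which one must verify is preserved along a suitably chosen path, using that the endpoints are $A$-symmetric and that the geodesics of $\mathcal{Q}(\h)$ between close compatible projections stay in $Gr_A$, by Corollary \ref{geodesicas}) ensures the whole path lives in $Gr_{res,p}$.

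The delicate point to watch is that a path in the classical restricted Grassmannian of $\ele$ need not pass through extensions of compatible projections; one must choose the connecting path to remain in the image of $Gr_{res,p}$ under the extension map, or equivalently work entirely inside $Gr_{res,p}$ from the start and only use the $\ele$-picture to detect the index obstruction. Thus I expect the cleanest organization to be: prove directly in $Gr_{res,p}$ that the orbit $\gg_{p,A}\cdot Q_1$ (which by Proposition \ref{comp gras res} is the whole connected component of $Q_1$) coincides with the set of $Q\in Gr_{res,p}$ having a prescribed index $index(\bar Q,\bar Q_0)$, showing one inclusion by index-invariance under conjugation and the reverse inclusion by a connectedness argument inside each fixed index class, where the classical result of \cite{ass} that the zero-index class is path-connected (together with the local lifting) closes the gap.
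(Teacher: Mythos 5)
Your forward implication is essentially correct and close in spirit to the paper's (the paper argues even more directly: a path $Q(t)$ in $Gr_{res,p}$ extends to a norm-continuous path $\bar{Q}(t)$ of projections in $\ele$ via $\|\bar{Q}(t)-\bar{Q}(s)\|_{\b(\ele)}\le\|Q(t)-Q(s)\|$, and the index is locally constant along it). Note only that your invariance claim --- conjugating $\bar{Q}_1$ by $\bar{G}=1+\bar{K}$ while $\bar{Q}_0$ stays fixed --- is not purely formal: it needs the additivity of \cite{ass} plus the fact that $index(\bar{G}\bar{Q}_0\bar{G}^{-1},\bar{Q}_0)=0$, which itself requires a connectedness/homotopy argument in the unitary group $1+\b_p(\ele)$.

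The converse, however, has a genuine gap, and it sits exactly at the point you flag as ``the heart of the matter.'' You correctly observe that the unitary $U=1+K$ on $\ele$ supplied by the classical theory need not satisfy $U(\h)=\h$, and you propose to repair this by joining $\bar{Q}_1$ to $\bar{Q}_2$ with a path in the classical restricted Grassmannian of $\ele$ and lifting it locally. But the cross sections of Proposition \ref{comp gras res} and the logarithm of Remark \ref{difeo exp} exist only at points of $Gr_{res,p}$: they lift neighborhoods of an $A$-symmetric projection inside $Gr_{res,p}$, not neighborhoods of an arbitrary projection of $\ele$. A path in the $\ele$-picture will in general pass through projections that do not leave $\h$ invariant, and then there is nothing to lift. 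Your fallback --- ``work entirely inside $Gr_{res,p}$ and run a connectedness argument inside each fixed index class'' --- is circular: the path-connectedness of a fixed-index class inside $Gr_{res,p}$ is precisely the statement to be proven, and the connectedness of the zero-index class of the $\ele$-Grassmannian does not transfer. The paper closes this gap by an explicit construction carried out inside $\h$, not by lifting: since $Q_1-Q_2$ is compact, Krein--Lax (Theorem \ref{symmetrizable results}) gives that $\ker(\bar{Q}_1-\bar{Q}_2\mp 1)$ are finite-dimensional subspaces of $\h$; vanishing index makes them equidimensional, so a finite-rank $[\ ,\ ]$-unitary $U_0$ swaps them; on the $A$-orthogonal complement $\h_0$ of their sum, the operator $S_0=\left(Q_2Q_1+(1-Q_2)(1-Q_1)\right)|_{\h_0}$ is invertible, lies in $1+\b_p(\h_0)$, and intertwines $Q_1,Q_2$; its unitary part $G_0=S_0T^{-1}$ --- where $T$ is a square root of $R=S_0'S_0$ built by a Riesz integral over a contour symmetric about the real axis, so that $T$ remains symmetrizable, commutes with $Q_1$, and stays in $1+\b_p(\h_0)$ --- is a $[\ ,\ ]$-isometry of $\h_0$ intertwining the restrictions. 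Then $U_0\oplus G_0\in\gg_{p,A}$ conjugates $Q_1$ to $Q_2$, and Proposition \ref{comp gras res} identifies the orbit with the connected component. Some such construction performed at the level of $\h$ is what your proposal is missing.
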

\begin{proof}
Let $Q(t)$, $t\in[t_1,t_2]$, be a continuous path in $Gr_{res, p}$ such that $Q(t_1)=Q_1$ and $Q(t_2)=Q_2$. The inequality 
$$
\|\bar{Q}(t)-\bar{Q}(s)\|_{\b(\ele)} \le \|Q(t)-Q(s)\|,
$$
implies that the path $\bar{Q}(t)$ is continuous in the restricted Grassmannian of $\ele$. This implies that $index(\bar{Q}_1,\bar{Q}_0)=index(\bar{Q}_2,\bar{Q}_0)$. 

Conversely, suppose that $index(\bar{Q}_1,\bar{Q}_0)=index(\bar{Q}_2,\bar{Q}_0)$.
By Theorem 3.4 in \cite{ass}, this implies that
$$
index(\bar{Q}_1,\bar{Q}_2)=index(\bar{Q}_1,\bar{Q}_0)+index(\bar{Q}_0,\bar{Q}_2)=index(\bar{Q}_1,\bar{Q}_0)-index(\bar{Q}_2,\bar{Q}_0)=0.
$$
Since $Q_1-Q_2$ is compact, by the results of M.G. Krein and P. D. Lax, the eigenspaces corresponding to the non zero eigenvalues of $Q_1-Q_2$ and $\bar{Q}_1-\bar{Q}_2$ coincide. In particular, this implies that  
$$
\ker(\bar{Q}_1-\bar{Q}_2-1)=\ker(Q_1-Q_2-1) \hbox{ and } \ker(\bar{Q}_1-\bar{Q}_2+1)=\ker(Q_1-Q_2+1)
$$ 
are finite dimensional subspaces of $\h$. As remarked in \cite{ass}, the hypothesis $index(\bar{Q}_1,\bar{Q}_2)=0$ implies that 
$$
\dim \ker(\bar{Q}_1-\bar{Q}_2-1)=\dim \ker(\bar{Q}_1-\bar{Q}_2+1).
$$
Let $W_0$ be a unitary transformation (for the inner product $[\ , \ ]$ of $\ele$) from $\ker(\bar{Q}_1-\bar{Q}_2-1)$ onto $\ker(\bar{Q}_1-\bar{Q}_2+1)$.  
Let ${\cal N}$ be the subspace 
$$
{\cal N}=\ker(\bar{Q}_1-\bar{Q}_2-1)\oplus \ker(\bar{Q}_1-\bar{Q}_2+1)\subset\h,
$$ 
and $U_0:{\cal N}\to {\cal N}$  be the unitary operator
$$
U_0(f\oplus g)=W_0^{-1}g\oplus W_0f.
$$
Since ${\cal N}$ is finite dimensional, it  has  a supplement $\h_0$ which is orthogonal for the inner product $[\ , \ ]$ of $\ele$ (see Proposition \ref{caract comp}): 
$$
{\cal N}\oplus \h_0=\h  \hbox{ and } {\cal N}\oplus \bar{\h}_0= \ele.
$$
Straightforward computations show that 
$$
Q_1(\ker(Q_1-Q_2-1))\subset {\cal N} \hbox{ and } Q_1(\ker(Q_1-Q_2+1))\subset \ker(Q_1-Q_2-1),
$$
and therefore $Q_1({\cal N})\subset {\cal N}$. Similarly, $Q_2({\cal N})\subset {\cal N}$. Therefore $Q_1$ and $Q_2$ leave $\h_0$ invariant. To complete the proof, we have to construct an invertible operator $G_0$ on $\h_0$, 
intertwining $Q_1|_{\h_0}$ and $Q_2|_{\h_0}$, such that $G_0$ is isometric for the inner product $[\ , \ ]$ of $\ele$ and belongs to $1+\b_p(\h_0)$. Clearly, this would imply that $U_0 \oplus G_0$ belongs to $\gg_{p,A}$ and intertwines $Q_1$ and $Q_2$. 

Let $B=1-Q_1-Q_2$. 

Clearly $B$ is reduced by the decomposition ${\cal N}\oplus \h_0=\h$. The extension $\bar{B}$ of $B$ to $\ele$ is invertible (see \cite{ass}). Indeed, since 
$B^2=1-(Q_1-Q_2)^2$, the spectrum of $\bar{B}^2$ consists of $1$ and eigenvalues of finite multiplicity, corresponding to the squares of the non zero eigenvalues of $Q_1-Q_2$. Since the eigenvalues $1$ and $-1$ have been erased in $\h_0$, it follows that $B$ 
is invertible in $\h_0$. Set 
$$
S=Q_2Q_1+(1-Q_2)(1-Q_1).
$$
Note that $S$ also is reduced by $\h_0\oplus {\cal N}=\h$. Denote by $S_0=S|_{\h_0}$. It is easily seen that 
$$
S=(1-2Q_2)B=B(1-2Q_1),
$$
which implies that $S_0$ is invertible in $\h_0$.
Moreover,
$$
S=1+2Q_2Q_1-Q_1-Q_2=1+Q_2(Q_1-Q_2)+(Q_2-Q_1)Q_1 \in 1+\b_p(\h),
$$
and  then $S_0\in 1_{\h_0}+\b_p(\h_0)$. Let us obtain from $S_0$ an $\ele$-isometry  in $1_{\h_0}+\b_p(\h_0)$  , intertwining $Q_1$ and $Q_2$.  
Let $S'_0=Q_1Q_2+(1-Q_1)(1-Q_2)$  acting on $\h_0$. Clearly it is invertible, and intertwines $Q_2$ with $Q_1$. Moreover, $\bar{S}'_0=\bar{S}_0^+$ (as in Theorem \ref{proper op}). Put 
$$
R=S'_0S_0=Q_1Q_2Q_1+(1-Q_1)(1-Q_2)(1-Q_1),
$$
which is invertible and commutes with $Q_1$. Moreover, it is symmetrizable on $\h_0$, that is, $R^*P_{\h_0}AP_{\h_0}=P_{\h_0}AP_{\h_0}R$. Also note that the extension $\bar{R}=|\bar{S}_0|^2$ is positive and invertible on $\bar{\h}_0$. The spectrum of $R$ consists of finite multiplicity positive eigenvalues and the scalar $1$. The element $R$ is of the form $1+\b_p(\h_0)$, therefore it is an invertible element in the Banach algebra $\mathbb{C}+\b_p(\h_0)$ (endowed with the norm $|z1+K|=(|z|^p+\|K\|_p^p)^{1/p}$). Let $\Gamma$ be a path in $\mathbb{C}$, which contains the spectrum $\sigma_{\h_0}(R)$ in its interior,  leaves $0$ outside, and is symmetric with respect to the $x$-axis. Consider the invertible element $T\in\b(\h_0)$ given by the Riesz integral
$$
T=\frac{1}{2\pi i} \int_\Gamma e^{\frac12 log(z)}(z1-R)^{-1} dz,
$$
where $log(z)$ is the usual determination of the complex logarithm. Then
$$
T^*P_{\h_0}AP_{\h_0}=-\frac{1}{2\pi i} \int_{\bar{\Gamma}} e^{\frac12 log(\bar{z})}(\bar{z}1-R^*)^{-1}P_{\h_0}AP_{\h_0} \ d\bar{z}.
$$
Since $R$ is symmetrizable,  for any integer power $k\ge 0$,
$$
(R^*)^kP_{\h_0}AP_{\h_0}=P_{\h_0}AP_{\h_0}R^k.
$$
Therefore
$$
(\bar{z}1-R^*)^{-1}P_{\h_0}AP_{\h_0}=P_{\h_0}AP_{\h_0}(\bar{z}1-R)^{-1},
$$
and thus
$$
T^*P_{\h_0}AP_{\h_0}=-P_{\h_0}AP_{\h_0}\frac{1}{2\pi i} \int_{\bar{\Gamma}} e^{\frac12 log(\bar{z})}(\bar{z}1-R)^{-1} d\bar{z}.
$$
Since $\Gamma$ is symmetric with respect to the $x$-axis, if $z(t)$, $t\in I$ is a counter-clockwise parametrization of $\Gamma$, then  $\bar{z}(t)$, $t\in I$ is a clockwise reparametrization of the same path. Thus,
$$
T^*P_{\h_0}AP_{\h_0}=P_{\h_0}AP_{\h_0}T,
$$
i.e. $T$ is a symmetrizable operator in $\h_0$, and it is of the form $w1+\b_p(\h_0)$. Also it is apparent that $T$ commutes with $Q_1$, because $R$ does. The  extension of $T$  to $\ele$ is  the square root of $\bar{R}$, and therefore $w=1$, i.e. $R\in 1+\b_p(\h_0)$. Then $G_0=S_0T^{-1}$ is an invertible element acting in $\h_0$, 
which induces an invertible element in  $\bar{\h}_0$, given by 
$$
\bar{G}=\bar{S}_0\bar{T}^{-1}=\bar{S}_0|\bar{S}_0|^{-1},
$$
which is a unitary operator in $\bar{\h}_0$. Clearly, $G_0$ intertwines $Q_1|_{\h_0}$ and $Q_2|_{\h_0}$, and the proof is complete.
\end{proof}

There is an analogue of Proposition \ref{delta splits} in this context, which will allow us to prove the local regularity of the component $(Gr_{res ,p})_{Q_1}$. Recall that  $\delta_{Q_1}$ is defined by $\delta_{Q_1}(X)=XQ_1-Q_1X$. Note that if $X\in\mathfrak{g}_p$, then $\delta_{Q_1}(X)\in i \mathfrak{g}_p$. Indeed, it was already seen that it belongs to $\b_s^A(\h)$, and it is apparent that $\delta_{Q_1}(X)\in\b_p(\h)$, if $X\in\b_p(\h)$.
\begin{lem}
The map
$$
\delta_{Q_1}|_{\mathfrak{g}_p}:\mathfrak{g}_p\to i \mathfrak{g}_p, \ \ \delta_{Q_1}(X)=XQ_1-Q_1X
$$
has complemented range and kernel.
\end{lem}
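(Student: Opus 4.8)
The plan is to transcribe the proof of Proposition \ref{delta splits} to the Schatten setting, the new observation being that the idempotent used there automatically preserves $\b_p(\h)$. Recall that the proof of Proposition \ref{delta splits} rested on the linear map $E=E_{Q_1}:\b(\h)\to\b(\h)$, $E(X)=Q_1XQ_1+(1-Q_1)X(1-Q_1)$, which is idempotent, maps $\b_s^A(\h)$ into itself and $\b_{as}^A(\h)$ into itself, and whose restrictions $E_s,E_{as}$ witness that $R(\delta_{Q_1})$ and $\ker(\delta_{Q_1})$ are complemented in $\b_s^A(\h)$ and $\b_{as}^A(\h)$ respectively. First I would record that $i\mathfrak{g}_p=\b_s^A(\h)\cap\b_p(\h)$, so that $\delta_{Q_1}$ indeed maps $\mathfrak{g}_p$ into $i\mathfrak{g}_p$, as already remarked before the statement.

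Next I would identify the range and kernel explicitly, exactly as in Proposition \ref{delta splits} but intersected with $\b_p(\h)$. The kernel is
$$
\ker(\delta_{Q_1}|_{\mathfrak{g}_p})=\{X\in\mathfrak{g}_p:\ Q_1X=XQ_1\},
$$
the $Q_1$-diagonal operators in $\mathfrak{g}_p$, which is immediate. For the range I would show
$$
R(\delta_{Q_1}|_{\mathfrak{g}_p})=\{Y\in i\mathfrak{g}_p:\ Q_1YQ_1=(1-Q_1)Y(1-Q_1)=0\},
$$
the $Q_1$-co-diagonal operators in $i\mathfrak{g}_p$. The inclusion $\subseteq$ is the computation already performed in Proposition \ref{delta splits}. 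For $\supseteq$, given such a $Y$ I would set $X=YQ_1-Q_1Y$; it is $A$-anti-symmetric as the commutator of two $A$-symmetric operators, and $\delta_{Q_1}(X)=Y$ by the same one-line computation as before. The only point to add is that $X\in\b_p(\h)$: this holds because $\b_p(\h)$ is a two-sided ideal, so $YQ_1,Q_1Y\in\b_p(\h)$ whenever $Y\in\b_p(\h)$. Hence $X\in\mathfrak{g}_p$ and $Y$ lies in the range.

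Finally I would invoke $E$ in the $p$-norm. Since the Schatten ideals are invariant under left and right multiplication by bounded operators, with $\|Q_1XQ_1\|_p\le\|Q_1\|^2\|X\|_p$ and likewise for the second summand, $E$ restricts to a bounded idempotent on $\b_p(\h)$, satisfying $\|E(X)\|_p\le(\|Q_1\|^2+\|1-Q_1\|^2)\|X\|_p$. As $E$ preserves $A$-symmetry and $A$-anti-symmetry, it restricts to bounded idempotents $E_s^{(p)}$ on $i\mathfrak{g}_p$ and $E_{as}^{(p)}$ on $\mathfrak{g}_p$. As in Proposition \ref{delta splits}, $R(E_{as}^{(p)})$ is precisely the set of $Q_1$-diagonal operators in $\mathfrak{g}_p$, i.e. $\ker(\delta_{Q_1}|_{\mathfrak{g}_p})$, so the kernel is complemented in $\mathfrak{g}_p$; and $\ker(E_s^{(p)})$ is precisely the set of $Q_1$-co-diagonal operators in $i\mathfrak{g}_p$, i.e. $R(\delta_{Q_1}|_{\mathfrak{g}_p})$, which is therefore closed and complemented in $i\mathfrak{g}_p$, a complement being $R(E_s^{(p)})$.

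There is no genuine obstacle beyond bookkeeping: the whole argument is a restriction of Proposition \ref{delta splits}, and everything stays inside the Schatten class thanks to the ideal property of $\b_p(\h)$ together with the $p$-norm boundedness of left and right multiplication. The case $p=\infty$ (compact operators with the operator norm) is handled identically, since $\b_\infty(\h)$ is likewise a two-sided ideal and $E$ is bounded in the operator norm. The one feature worth stating carefully is the closedness of the range, which I obtain for free by exhibiting it as the kernel of the bounded idempotent $E_s^{(p)}$.
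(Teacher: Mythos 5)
Your proof is correct, but it follows a different route from the paper's. The paper does not re-run the argument of Proposition \ref{delta splits}; instead it exploits the purely algebraic identity $\delta_{Q_1}^3=\delta_{Q_1}$ on $\b(\h)$. Writing $\delta_1:\mathfrak{g}_p\to i\mathfrak{g}_p$ and $\delta_2:i\mathfrak{g}_p\to \mathfrak{g}_p$ for the two restrictions of $\delta_{Q_1}$ (the second exists by the same ideal-property observation you make), that identity gives $\delta_1\delta_2\delta_1=\delta_1$ and $\delta_2\delta_1\delta_2=\delta_2$, i.e. $\delta_2$ is a pseudo-inverse of $\delta_1$; hence $\delta_2\delta_1$ is a bounded idempotent on $\mathfrak{g}_p$ with $\ker(\delta_2\delta_1)=\ker(\delta_1)$, and $\delta_1\delta_2$ is a bounded idempotent on $i\mathfrak{g}_p$ with $R(\delta_1\delta_2)=R(\delta_1)$, which yields complementation at once, with no explicit description of the kernel and range. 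The two arguments are close relatives: a one-line computation shows $\delta_{Q_1}^2=1-E_{Q_1}$, so your idempotents $E_{as}^{(p)}$ and $E_s^{(p)}$ are exactly $1-\delta_2\delta_1$ and $1-\delta_1\delta_2$, and both proofs produce the same direct sum decompositions. What your version buys is the explicit identification of $\ker(\delta_{Q_1}|_{\mathfrak{g}_p})$ and $R(\delta_{Q_1}|_{\mathfrak{g}_p})$ as the $Q_1$-diagonal and $Q_1$-co-diagonal operators (the genuine Schatten analogue of Proposition \ref{delta splits}), at the modest cost of checking the $p$-norm boundedness of $E$; the paper's version is shorter and avoids that bookkeeping, its only Schatten-specific input being that $\delta_{Q_1}$ preserves $\b_p(\h)$, which is noted just before the statement of the lemma.
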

\begin{proof}
Note that by a similar argument as above, $\delta_{Q_1}(i \mathfrak{g}_p)\subset \mathfrak{g}_p$. To avoid confusion, let us denote by $\delta_1$ the map from $\mathfrak{g}_p$ to $i \mathfrak{g}_p$, and by $\delta_2$ the map from $i \mathfrak{g}_p$ to $\mathfrak{g}_p$, both induced by restricting $\delta_{Q_1}$. Note that 
$$
\delta_1\delta_2\delta_1=\delta_1 \ \hbox{ and } \ \ \delta_2\delta_1\delta_2=\delta_2.
$$
Indeed, if $X\in \b(\h)$, an elementary computation shows that
$$
\delta_{Q_1}^3(X)=XQ_1-Q_1X=\delta_{Q_1}(X),
$$
and the claim follows. These formulas imply that $\delta_2$ is a pseudo-inverse for $\delta_1$. Therefore $\delta_2\delta_1$ is an idempotent operator acting in $\mathfrak{g}_p$, whose kernel clearly contains the kernel of $\delta_1$. On the other hand, if $X$ lies in the kernel of $\delta_2\delta_1$, then apparently $0=\delta_1\delta_2\delta_1(X)=\delta_1(X)$, i.e. $\ker(\delta_2\delta_1)=\ker(\delta_1)$, and therefore $\ker(\delta_1)$ is complemented in $\mathfrak{g}_p$.

Similarly, $\delta_1\delta_2$ is an idempotent operator in $i \mathfrak{g}_p$, whose range is contained in the range of $\delta_2$. If $Y=\delta_1(X)$ for some $X\in \mathfrak{g}_p$, then 
$$
\delta_1\delta_2(Y)=\delta_1\delta_2\delta_1(X)=\delta_1(X)=Y,
$$
i.e. $R(\delta_1\delta_2)=R(\delta_1)$, and therefore it is complemented in $i \mathfrak{g}_p$.
\end{proof}
Therefore Lemma \ref{raeburn} applies:
\begin{coro}\label{struc grass comp rest}
The component $(Gr_{res ,p})_{Q_1}$ of the compatible restricted Grassmannian is a complemented submanifold of $Q_0+i \mathfrak{g}_p$ and the map
$$
\gg_{A, p}\to (Gr_{res ,p})_{Q_1}, \ \ G\mapsto GQ_1G^{-1}
$$
is a $C^\infty$ submersion.
\end{coro}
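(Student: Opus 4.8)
The plan is to invoke Raeburn's Lemma \ref{raeburn}, exactly in the spirit of the proof of Corollary \ref{est difer}, applied now to the smooth action of the Banach-Lie group $\gg_{p,A}$ on the affine Banach space $Q_0+i\gp$ (with $i\gp=\b_s^A(\h)\cap\b_p(\h)$, topologized by the $p$-norm) at the base point $Q_1$, via the orbit map $\pi_{Q_1}(G)=GQ_1G^{-1}$. First I would record that this action is well defined and smooth into $Q_0+i\gp$: the computation preceding Proposition \ref{comp gras res} shows $GQ_1G^{-1}-Q_0\in\b_p(\h)$ for $G\in\gg_{p,A}$, and $GQ_1G^{-1}$ is plainly $A$-symmetric, so $GQ_1G^{-1}-Q_0\in i\gp$; smoothness in the $p$-norm is clear since multiplication and inversion are smooth in $1+\b_p(\h)$.

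Next I would verify hypothesis (2) of Lemma \ref{raeburn}, that the differential splits. Since $\gg_{p,A}$ is exponential with Lie algebra $\gp$, differentiating the curve $t\mapsto e^{tX}Q_1e^{-tX}$ at $t=0$ gives $d(\pi_{Q_1})_1(X)=XQ_1-Q_1X=\delta_{Q_1}(X)$ for $X\in\gp$. As noted just before the preceding Lemma, $\delta_{Q_1}$ carries $\gp$ into $i\gp$, so $d(\pi_{Q_1})_1=\delta_{Q_1}|_{\gp}:\gp\to i\gp$ is precisely the operator treated there, whose kernel and range are complemented. This is the one genuinely nontrivial analytic input, and it has already been supplied: the identity $\delta_{Q_1}^3=\delta_{Q_1}$ makes $\delta_2$ a pseudo-inverse of $\delta_1:=\delta_{Q_1}|_{\gp}$, forcing both $\ker(\delta_1)$ and $R(\delta_1)$ to be complemented.

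For hypothesis (1), openness of $\pi_{Q_1}$ onto its orbit, I would appeal directly to Proposition \ref{comp gras res}(ii): the existence of continuous local cross sections for $G\mapsto GQ_1G^{-1}$ in the $p$-norm immediately yields that $\pi_{Q_1}$ is an open map onto the orbit endowed with the relative topology of $Q_0+i\gp$. Moreover, by Proposition \ref{comp gras res}(i) the action of $\gg_{p,A}$ is transitive on $(Gr_{res,p})_{Q_1}$, so the orbit $\{GQ_1G^{-1}:G\in\gg_{p,A}\}$ is exactly the whole connected component $(Gr_{res,p})_{Q_1}$.

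With both hypotheses in hand, Lemma \ref{raeburn} yields at once that $(Gr_{res,p})_{Q_1}$ is a complemented $C^\infty$ submanifold of $Q_0+i\gp$ and that $\pi_{Q_1}$ is a $C^\infty$ submersion. The main obstacle is not in this assembly step but in the two ingredients it consumes, both already established: the splitting of $\delta_{Q_1}|_{\gp}$ (the pseudo-inverse argument in the preceding Lemma) and the construction of continuous cross sections in the $p$-topology (handled in Proposition \ref{comp gras res} via convergence of the logarithm series in the $p$-norm). The only point demanding care here is the bookkeeping of confirming that the differential lands in $i\gp$ rather than in all of $\b_s^A(\h)$, which is exactly the remark that $\delta_{Q_1}(X)\in\b_p(\h)$ whenever $X\in\b_p(\h)$.
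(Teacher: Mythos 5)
Your proposal is correct and follows essentially the same route as the paper: the paper's proof of this corollary is precisely the one-line application of Lemma \ref{raeburn}, with the splitting of $d(\pi_{Q_1})_1=\delta_{Q_1}|_{\gp}:\gp\to i\gp$ supplied by the preceding lemma and the openness of the orbit map supplied by the continuous local cross sections of Proposition \ref{comp gras res}. You have simply made explicit the bookkeeping (smoothness of the action into $Q_0+i\gp$, identification of the differential, transitivity identifying the orbit with the component) that the paper leaves to the reader.
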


There is yet another consequence of the fact that the logarithm $Q\mapsto X_Q$ takes values in $\mathfrak{g}_p$ if $\|Q-Q_1\|_p<r_{Q_1}$:
\begin{coro}\label{geodesica restringida}
Let $Q$ be an element in the component of $Q_1$ in $Gr_{res, p}$. If $\|Q-Q_1\|_p<r_{Q_1}$, then the unique geodesic $\delta(t)=e^{tX_{Q}}Q_1e^{-tX_{Q}}$ of the full compatible Grassmannian $Gr_A$ which joins $Q$ and $Q_1$, lies in fact inside the restricted Grassmannian $Gr_{res, p}$.
\end{coro}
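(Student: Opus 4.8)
The plan is to observe that the real analytic content of this corollary has already been produced elsewhere in the paper, so that the proof reduces to assembling two earlier facts: that $\delta$ lies in the full compatible Grassmannian $Gr_A$, and that its exponent $X_Q$ actually sits in the Lie algebra $\mathfrak{g}_p$. First I would apply Corollary \ref{geodesicas}. Since $A\le 1$ gives $\|Q-Q_1\|\le\|Q-Q_1\|_p<r_{Q_1}$, that corollary produces the unique geodesic $\delta(t)=e^{tX_Q}Q_1e^{-tX_Q}$ of ${\cal Q}(\h)$ joining $Q_1$ and $Q$ and guarantees $\delta(t)\in Gr_A$ for every $t\in\mathbb{R}$. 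It therefore only remains to upgrade this membership from $Gr_A$ to $Gr_{res, p}$, i.e.\ to verify that $\delta(t)-Q_0\in\b_p(\h)$ for all $t$.

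The decisive input is that $X_Q\in\mathfrak{g}_p$, and I would not reprove this but simply cite the argument already carried out in the proof of Proposition \ref{comp gras res}(i): from $Q-Q_1\in\b_p(\h)$ one obtains $(e^{2X_Q}-1)\epsilon_{Q_1}=\epsilon_Q-\epsilon_{Q_1}\in\b_p(\h)$, whence $e^{2X_Q}-1\in\b_p(\h)$; since $\|e^{2X_Q}-1\|<1$ forces $-1\notin\sigma_\h(e^{2X_Q})$, Remark \ref{difeo exp} yields $X_Q\in\b_p(\h)$, and together with $X_Q\in\b^A_{as}(\h)$ this gives $X_Q\in\mathfrak{g}_p$.

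With $X_Q\in\mathfrak{g}_p$ fixed, take any $t\in\mathbb{R}$. As $\mathfrak{g}_p$ is a real linear space, $tX_Q\in\mathfrak{g}_p$, so $e^{tX_Q}\in\gg_{p,A}$ — either because $\gg_{p,A}=\exp(\mathfrak{g}_p)$ is exponential, or directly, since $e^{tX_Q}-1=\sum_{n\ge1}(tX_Q)^n/n!\in\b_p(\h)$ by the ideal property of $\b_p(\h)$, while $e^{tX_Q}\in\gg_A$ because $X_Q$ is $A$-anti-symmetric. Because $Q_1\in Gr_{res, p}$ and $\gg_{p,A}$ acts on $Gr_{res, p}$ (the computation $GQ_1G^{-1}-Q_0\in\b_p(\h)$ recorded right after the definition of $Gr_{res, p}$), I would conclude $\delta(t)=e^{tX_Q}Q_1e^{-tX_Q}\in Gr_{res, p}$ for every $t$, which is the assertion.

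I expect no genuine obstacle: the two hard facts — the existence of the geodesic inside $Gr_A$ and the membership $X_Q\in\mathfrak{g}_p$ — are both already available, so the proof is essentially bookkeeping with the Schatten ideal. If one preferred to bypass the group action altogether, writing $\delta(t)-Q_0=(e^{tX_Q}Q_1e^{-tX_Q}-Q_1)+(Q_1-Q_0)$ settles the matter at once, since the first summand lies in $\b_p(\h)$ by the same ideal computation and the second does because $Q_1\in Gr_{res, p}$.
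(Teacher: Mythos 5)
Your proposal is correct and follows essentially the same route as the paper, which presents this corollary precisely as a consequence of the fact (established in the proof of Proposition \ref{comp gras res}, part i)) that $X_Q\in\mathfrak{g}_p$ whenever $\|Q-Q_1\|_p<r_{Q_1}$, combined with the action of $\gg_{p,A}$ on $Gr_{res, p}$ recorded after its definition. One small correction: the inequality $\|Q-Q_1\|\le\|Q-Q_1\|_p$ is a standard property of Schatten norms and does not come from $A\le 1$ (which only compares the vector norms $\|\cdot\|_A$ and $\|\cdot\|$ on $\h$).
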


\section{Finsler metrics in $Gr_A$}\label{minimal curves}

We shall endow the tangent spaces of $Gr_A$  with a continuous Finsler metric.  Recall first that the tangent space of $Gr_A$ at $Q_0$ is given by
$$(TGr_A)_{Q_0}=\{XQ_0-Q_0X: X\in\b_{as}^A(\h)\}.$$ 
Note that the elements of the group $\gg_A$ are isometries for the $\ele$ inner product. Moreover, the action is isometric if one chooses for  $(TGr_A)_{Q}$ the norm of $\b(\ele)$. Namely, for $v\in (TGr_A)_{Q}$, put
$$
|v|_Q=\|\bar{v}\|_{\b(\ele)}.
$$
Then if $v=XQ_0-Q_0X\in (TGr_A)_{Q_0}$ and $G\in\gg_A$
$$
|G\cdot v|_{G\cdot Q}=\|G(XQ-QX)G^{-1}\|_{\b(\ele)}=\|XQ-QX\|_{\b(\ele)}=|v|_Q.
$$
We denote the length functional induced by this metric with
\[   L_{Gr_A}(\gamma)=\int_0^1 \| \dot{\gamma} \|_{\b(\ele)}  \, dt , \]
where $\gamma: [0,1] \to Gr_A$ is a piecewise smooth curve.
The rectifiable distance associated to this metric is defined as usual, i.e.
\[  d(Q_0,Q_1)=\inf \{ \, L_{Gr_A}(\gamma) \, : \, \text{$\gamma$ is a piecewise smooth curve in $Gr_A$ joining $Q_0$ and $Q_1$}   \, \}. 
 \]
 
\begin{rem}\label{complt}
It is easily  seen  that this rectifiable distance defines a  topology weaker than the operator norm of $\b(\h)$. 
On the other hand, the metric space $(Gr_A, d)$ is not complete. In order to show this latter fact, pick a vector $f \in \ele \backslash \h$ such that $\| f\|_A=1$. Then there exists a sequence of vectors  $(f_n)_n$ in $\h$ such that $\|f\|_A=1$ and $\| f_n - f \|_A \to 0$. Next consider the rank one operators defined by $(f_n\otimes f_n)(h)=[h,f_n]f_n$, $h \in \h$. Clearly, 
$f_n \otimes f_n \in Gr_A$.

Given $n,m \geq 1$, we can find an operator $G_{n,m} \in \gg_A$ such that $G_{n,m} (f_n \otimes f_n) G_{n,m}^{-1}=f_m \otimes f_m$. Indeed, $G_{n,m}$ can be chosen satisfying $\rank (G_{n,m} -1) \leq 2$ (see \cite[Lemma 3.4]{chm}). According to \cite[Proposition 4.5]{achl} the operators $G_{n,m}$ have  logarithms $X_{n,m}$ of finite rank in $\b_{as}^A(\h)$.   Consider the curves $\gamma_{n,m}(t)=e^{tX_{n,m}} (f_n \otimes f_n ) e^{-tX_{n,m}}$, then
\begin{align*}
  d(f_n \otimes f_n,f_m \otimes f_m) & \leq L_{Gr_A}(\gamma_{n,m})   \\ 
  &  \leq \| X_{n,m}(f_n \otimes f_n) - (f_n \otimes f_n)   X_{n,m} \|_{\b(\ele)} \leq 2 \| X_{n,m} \|_{\b(\ele)} \underset{n,m \to \infty}{\longrightarrow} 0,
\end{align*}
where this convergence to zero is due to the well known fact that orthogonal projections in $\b(\ele)$ have local continuous cross sections    
and $\| f_n \otimes f_n - f \otimes f \|_{\b(\ele)} \to 0.$ Hence we have proved that $(f_n \otimes f_n)_n$ is a Cauchy sequence in $(Gr_A, d)$.

Now we will show that $(f_n \otimes f_n)_n$ does not converge in $(Gr_A, d)$. Suppose that there is $Q_0 \in Gr_A$ such that $d(f_n \otimes f_n,Q_0) \to 0$. Since straight lines are shortest paths in any vector space, 
\[  \| \bar{Q}_0 - f_n \otimes f_n \|_{\b(\ele)} \leq \int_0 ^1 \| \bar{\dot{\gamma}} \|_{\b(\ele)} dt  \]
for any piecewise curve $\gamma: [0,1] \to Gr_A$ joining $\bar{Q}_0$ and $f_n \otimes f_n$. Therefore
\[   \| \bar{Q}_0 - f_n \otimes f_n \|_{\b(\ele)} \leq d(Q_0 , f_n \otimes f_n) \to 0.  \]
Thus we get that $\bar{Q}_0=f \otimes f$. By our choice of the vector $f$, it follows that $\bar{Q}_0$ does not leave $\h$ invariant. This  contradicts our assumption that $Q_0 \in Gr_A$. 
\end{rem}

With this choice for the metric one can prove that the geodesics in Corollary \ref{geodesicas} are curves of minimal length in $Gr_A$. 
\begin{prop}\label{minimal}
Let $Q_0,Q_1\in Gr_A$ such that $\|Q_0-Q_1\|<r_{Q_0}$. Then the unique geodesic of ${\cal Q}(\h)$ which joins them, and lies inside $Gr_A$, has minimal length among all possible piecewise smooth curves in $Gr_A$ joining  $Q_0$ and $Q_1$.
\end{prop}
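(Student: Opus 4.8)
The plan is to reduce the minimality of $\delta$ inside $Gr_A$ to the known minimality of geodesics in the Grassmannian of orthogonal projections of the Hilbert space $\ele$. The crucial observation is that the Finsler metric on $Gr_A$ has been defined precisely so that the extension map $Q\mapsto \bar Q$ becomes an infinitesimal isometry onto its image. Denote by $\p(\ele)$ the manifold of orthogonal projections of $\ele$, endowed with its standard operator-norm Finsler metric (velocity measured in $\|\,\cdot\,\|_{\b(\ele)}$). As noted in the proof of Proposition \ref{caract comp}, each $\bar Q$, for $Q\in Gr_A$, is an orthogonal projection on $\ele$, so $Q\mapsto\bar Q$ maps $Gr_A$ into $\p(\ele)$. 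By Remark \ref{ext map bounded} the extension map $\b_s^A(\h)\to\b(\ele)$ is bounded and linear, hence continuous, so it commutes with differentiation: if $\gamma$ is a piecewise smooth curve in $Gr_A$, then $\dot{\bar\gamma}=\overline{\dot\gamma}$. Since the velocities $\dot\gamma(t)$ are $A$-symmetric (being tangent vectors of $Gr_A$), we obtain $\|\dot{\bar\gamma}(t)\|_{\b(\ele)}=\|\overline{\dot\gamma(t)}\|_{\b(\ele)}$, and therefore
\[
L_{\p(\ele)}(\bar\gamma)=\int_0^1 \|\dot{\bar\gamma}\|_{\b(\ele)}\, dt = L_{Gr_A}(\gamma).
\]
Thus the extension map preserves lengths exactly.

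Next I would identify the extended geodesic. By the construction preceding Proposition \ref{estimate r}, the geodesic $\delta(t)=e^{tX_{Q_1}}Q_0e^{-tX_{Q_1}}$ of Corollary \ref{geodesicas} extends to $\bar\delta(t)=e^{tZ_{\bar Q_1}}\bar Q_0 e^{-tZ_{\bar Q_1}}$, which is exactly the geodesic of $\p(\ele)$ joining $\bar Q_0$ and $\bar Q_1$ furnished by \cite{pr}, with $Z_{\bar Q_1}$ anti-self-adjoint, co-diagonal with respect to $\bar Q_0$, and satisfying $\|Z_{\bar Q_1}\|<\pi/6$. Because this exponent lies well below the critical value $\pi/2$, the minimality theorem for geodesics in the Grassmannian of a Hilbert space, due to Porta and Recht (cf.\ \cite{pr}), applies and shows that $\bar\delta$ has minimal length among \emph{all} piecewise smooth curves in $\p(\ele)$ joining $\bar Q_0$ and $\bar Q_1$.

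The comparison then closes the argument. Let $\gamma$ be any piecewise smooth curve in $Gr_A$ joining $Q_0$ and $Q_1$. Its extension $\bar\gamma$ is a curve in $\p(\ele)$ joining $\bar Q_0$ and $\bar Q_1$, so by the minimality of $\bar\delta$ and the length identity above,
\[
L_{Gr_A}(\delta)=L_{\p(\ele)}(\bar\delta)\le L_{\p(\ele)}(\bar\gamma)=L_{Gr_A}(\gamma),
\]
which is exactly the claim.

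The only genuinely delicate points are, first, checking that the extension map is compatible with differentiation so that lengths transfer verbatim (this rests on the uniform bound $\|\bar B\|_{\b(\ele)}\le\|B\|_{\b(\h)}$ of Remark \ref{ext map bounded}), and second, invoking the correct Hilbert-space minimality theorem with the exponent bound $\|Z_{\bar Q_1}\|<\pi/6<\pi/2$ secured in the discussion preceding Proposition \ref{estimate r}. I expect the main conceptual obstacle to be ensuring that minimality is asserted over all competitor curves in $\p(\ele)$, not merely over those arising as extensions of curves in $Gr_A$; it is precisely this larger-class minimality in the Hilbert space that, restricted to the image of the extension map, yields minimality in $Gr_A$.
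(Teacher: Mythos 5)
Your proof is correct and takes essentially the same route as the paper's own: extend curves to the manifold of orthogonal projections of $\ele$ via $Q\mapsto\bar Q$, note that the Finsler metric makes this extension length-preserving, identify $\bar\delta$ with the Porta--Recht geodesic of \cite{pr} (with the exponent bound $\pi/6<\pi/2$), and pull the Hilbert-space minimality over all competitor curves back to $Gr_A$. The differences are purely notational, so nothing further is needed.
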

\begin{proof}
 Let $\gamma(t)$, $t\in I$, be a piecewise smooth curve in $Gr_A$. Then $\bar{\gamma}(t)$, which is a curve in the set of orthogonal projections in $\b(\ele)$, is also piecewise smooth. Indeed, the extension map
$$
\b_{s}^A(\h) \to \b(\ele), \, \, \, \,    X \mapsto \bar{X} 
$$
 is contractive (see Remark \ref{ext map bounded}). Note that by Corollary \ref{geodesicas}, the condition $\|Q_0-Q_1\|<r_{Q_0}$ implies the existence of the geodesic $\delta(t)=e^{tZ_{Q_0}}Q_0e^{-tZ_{Q_0}}$ joining $Q_0$ and $Q_1$. The extension  $\bar{\delta}$ of this curve is given by
$$
\bar{\delta}(t)=e^{t\bar{Z}_{Q_0}}\bar{Q}_0e^{-t\bar{Z}_{Q_0}}=e^{tZ_{\bar{Q}_0}}\bar{Q}_0e^{-tZ_{\bar{Q}_0}},
$$
which is the unique geodesic joining $\bar{Q}_0$ and $\bar{Q}_1$ in the manifold of symmetric projections in $\b(\ele)$, and $\|\bar{Z}_{Q_0}\|_{\b(\ele)}\le \pi/6 <\pi/2$. Then by the result in \cite{pr}, 
$$
length(\bar{\delta})\le length(\bar{\gamma}),
$$
where the lengths are measured with the usual operator norm of $\b(\ele)$.
On the other hand, by the definition of the metric in $Gr_A$, it follows that
$$
L_{Gr_A}(\gamma) =\int_I \|\bar{\dot{\gamma}}(t)\|_{\b(\ele)} dt =\int_I \|\dot{\bar{\gamma}}(t)\|_{\b(\ele)}dt=length(\bar{\gamma}).
$$ 
and the same holds true for $\delta$. This completes the proof.
\end{proof}
A natural Finsler metric can also be defined in the restricted compatible Grassmannian $Gr_{res, p}$. As seen in the last section, the connected components are homogeneous spaces of the Banach-Lie group $\gg_{A, p}$. Let us introduce a metric which is invariant for the action of this group. Namely, a  tangent  vector $v$ to the component of $Q_1$ of $Gr_{res, p}$ at $Q$, is the velocity vector $v=\dot{\alpha}(0)$ of a curve which takes values in the $A$-symmetric part of $Q_1+\b_p(\h)$. Therefore tangent vectors are $A$-symmetric operators in $\b_p(\h)$. Then put
$$
|v|_{Q}=\|\bar{v}\|_p,
$$
the Schatten $p$-norm of the extension $\bar{v}\in\b_p(\ele)$. Associated with this metric there is a non complete rectifiable distance (same proof as Remark \ref{complt}). Elements $G$ in $\gg_{A,p}$ extend to unitary operators $\bar{G}$ in $\ele$, which due to the results of M. G. Krein and P. D. Lax, are of the form $1+\b_p(\ele)$. Let $\gamma(t)$ be a smooth curve in $Gr_{res, p}$ with $\gamma(0)=Q_1$. Then $\bar{\gamma}$ is a curve in the restricted Grassmannian of $\ele$ given by the polarization $\ele=R(\bar{Q}_0)\oplus N(\bar{Q}_0)$. If $\|Q-Q_1\|_p<r_{Q_1}$, by Corollary \ref{geodesica restringida}, there exists a unique geodesic $\delta(t)=e^{tX_Q}Q_1e^{-tX_Q}$ such that $\delta(0)=1$. The assumption $r_{Q_1}<1$, implies in particular, that 
$\|\bar{Q}-\bar{Q}_1\|_{\b(\ele)}\le \|Q-Q_1\|_p<1$. Therefore by  \cite[Theorem 5.5]{odospe}, the geodesic $\bar{\delta}$ has minimal length in the restricted Grassmannian of $\ele$. Thus we have the following:
\begin{coro}\label{minimalp}
Let $Q,Q_1\in Gr_{res, p}$ such that $\|Q-Q_1\|_p<r_{Q_1}$. Then the unique geodesic $\delta$ of the compatible Grassmannian which joins them (and which remains inside $Gr_{res, p}$), has minimal length for the Finsler metric defined above.
\end{coro}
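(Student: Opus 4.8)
The plan is to transfer the minimality question to the classical restricted (Sato) Grassmannian of the Hilbert space $\ele$, where the corresponding minimality of geodesics is already available through \cite[Theorem 5.5]{odospe}. The bridge is the extension map $\b_s^A(\h)\cap\b_p(\h)\to\b_p(\ele)$, $X\mapsto\bar X$, which is linear and continuous, and which carries $Gr_{res,p}$ into the restricted Grassmannian of $\ele$ attached to the polarization $\ele=R(\bar Q_0)\oplus N(\bar Q_0)$, as was already noted in the discussion preceding Theorem \ref{components gres}.

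First I would record the length identity that makes this transfer exact. Since the Finsler metric was defined by $|v|_Q=\|\bar v\|_p$ and extension commutes with differentiation (being linear and continuous, hence compatible with limits), any piecewise smooth curve $\gamma$ in $Gr_{res,p}$ satisfies $L_{Gr_{res,p}}(\gamma)=\int_0^1\|\dot{\bar\gamma}(t)\|_p\,dt$, which is precisely the $p$-length of $\bar\gamma$ measured in $\b_p(\ele)$. Applied to the geodesic $\delta(t)=e^{tX_Q}Q_1e^{-tX_Q}$, whose exponent lies in $\mathfrak{g}_p$ by Corollary \ref{geodesica restringida}, this shows that its extension $\bar\delta(t)=e^{t\bar X_Q}\bar Q_1e^{-t\bar X_Q}$ is exactly the geodesic of the restricted Grassmannian of $\ele$ joining $\bar Q_1$ and $\bar Q$, and that $L_{Gr_{res,p}}(\delta)=\mathrm{length}_p(\bar\delta)$.

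Next I would feed in the distance hypothesis. Combining $\|Q-Q_1\|_p<r_{Q_1}<1$ with the contractivity of the extension map (Remark \ref{ext map bounded}) and $\|Q-Q_1\|\le\|Q-Q_1\|_p$, one obtains $\|\bar Q-\bar Q_1\|_{\b(\ele)}<1$. This is exactly the hypothesis under which \cite[Theorem 5.5]{odospe} applies, yielding that $\bar\delta$ has minimal $p$-length among \emph{all} piecewise smooth curves in the restricted Grassmannian of $\ele$ joining $\bar Q_1$ and $\bar Q$.

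Finally I would close the loop. Every competitor $\gamma$ in $Gr_{res,p}$ from $Q_1$ to $Q$ extends to a piecewise smooth curve $\bar\gamma$ in the restricted Grassmannian of $\ele$ from $\bar Q_1$ to $\bar Q$; minimality of $\bar\delta$ over this larger competitor family gives $\mathrm{length}_p(\bar\delta)\le\mathrm{length}_p(\bar\gamma)$, which by the length identity of the first step reads $L_{Gr_{res,p}}(\delta)\le L_{Gr_{res,p}}(\gamma)$, as claimed. I expect the main point requiring care to be not this inequality but the faithful transport of the metric: one must confirm that extension sends velocities of $Gr_{res,p}$-curves to velocities of $\ele$-Grassmannian curves \emph{isometrically} for the $p$-norm, so that the lengths agree on the nose, and that the $\ele$-competitor family genuinely contains the extensions of all admissible curves. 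Both facts follow from the linearity and $p$-norm behaviour of the extension map, making the whole argument a $p$-Schatten analogue of the reasoning in Proposition \ref{minimal}.
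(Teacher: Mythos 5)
Your proposal is correct and takes essentially the same route as the paper: both pass to the restricted Grassmannian of $\ele$ via the extension map, use $\|\bar{Q}-\bar{Q}_1\|_{\b(\ele)}\le\|Q-Q_1\|_p<1$ to invoke \cite[Theorem 5.5]{odospe} for the minimality of $\bar{\delta}$, and transfer the conclusion back to $Gr_{res,p}$ because the Finsler metric there is defined through extensions. You merely make explicit the length identity and the competitor-family bookkeeping that the paper leaves implicit.
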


\medskip
\medskip

{\sc (Esteban Andruchow)} {Instituto de Ciencias,  Universidad Nacional de Gral. Sar\-miento,
J.M. Gutierrez 1150,  (1613) Los Polvorines, Argentina and Instituto Argentino de Matem\'atica, `Alberto P. Calder\'on', CONICET, Saavedra 15 3er. piso,
(1083) Buenos Aires, Argentina.}

\noi e-mail: {\sf eandruch@ungs.edu.ar}

\bigskip

{\sc (Eduardo Chiumiento)} {Departamento de de Matem\'atica, FCE-UNLP,Calles 50 y 115, 
(1900) La Plata, Argentina  and Instituto Argentino de Matem\'atica, `Alberto P. Calder\'on', CONICET, Saavedra 15 3er. piso,
(1083) Buenos Aires, Argentina.}     
                                               
\noi e-mail: {\sf eduardo@mate.unlp.edu.ar}                                       

\bigskip

{\sc (Mar\'ia Eugenia Di Iorio y Lucero)} {Instituto de Ciencias,  Universidad Nacional de Gral. Sarmiento,
J.M. Gutierrez 1150,  (1613) Los Polvorines, Argentina and Instituto Argentino de Matem\'atica, `Alberto P. Calder\'on', CONICET, Saavedra 15 3er. piso,
(1083) Buenos Aires, Argentina.}

\noi e-mail: {\sf mdiiorio@ungs.edu.ar }
\end{document}